\documentclass[]{amsart}

\usepackage{amsaddr}
\usepackage{amssymb}
\usepackage{amsthm}
\usepackage{thmtools}
\usepackage{graphicx}
\usepackage{bm} 
\usepackage{microtype}
\usepackage{dsfont}
\usepackage{mathtools}
\usepackage{tikz}
\usetikzlibrary{graphs}
\usepackage[backend=biber,bibencoding=utf-8]{biblatex}
\usepackage{csquotes}
\usepackage{comment}
\usepackage{hyperref}
\usepackage[capitalize]{cleveref}
\usepackage{booktabs,multirow}
\usepackage{pgfplotstable}
\usepackage{booktabs, siunitx}

\usetikzlibrary{decorations.markings}

\newcounter{mythm}
\crefalias{mythm}{section}
\numberwithin{mythm}{section}

\declaretheorem[style=plain, sibling=mythm]{theorem}
\declaretheorem[style=plain, sibling=mythm]{lemma}

\declaretheorem[style=plain, sibling=mythm]{corollary}
\declaretheorem[style=definition, sibling=mythm]{definition}
\declaretheorem[style=definition, sibling=mythm]{example}
\declaretheorem[style=remark, sibling=mythm]{remark}

\def\XXint#1#2#3{{\setbox0=\hbox{$#1{#2#3}{\int}$}
     \vcenter{\hbox{$#2#3$}}\kern-.5\wd0}}

    \renewcommand{\Re}[1]{\mathrm{Re}(#1)}
    \renewcommand{\Im}[1]{\mathrm{Im}(#1)}

\definecolor{mycor}{rgb}{1, 0.6, 0}
\definecolor{mycom}{rgb}{1, 0.4, 0}

\begin{document}
\title[Graph lattice sums]{Graph lattice sums and graph zeta functions for long-range interacting quantum lattice models}

\author{Andreas A. Buchheit}
\address{Saarland University, Department of Mathematics, 66123 Saarbrücken, Germany \\ 
ETH Zürich, Department of Mathematics, 8092 Zürich, Switzerland}

\author{Andreas Rupp}
\address{Saarland University, Department of Mathematics, 66123 Saarbrücken, Germany}

\begin{abstract}

Taming the exponential increase of the Hilbert space dimension with system size in the simulation of gapped quantum lattice models is of the highest relevance for understanding and designing exotic quantum materials, where nonlocal interactions are of particular interest. High-order linked-cluster expansions provide access the solution of the eigenvalue problem for the infinite system, yet rely on the computation of high-dimensional oscillatory lattice sums with a graph structure, only approachable with Monte Carlo methods so far. This work resolves this issue, rendering all required graph lattice sums, referred to as graph zeta functions for kernels involving power-laws, computable. The resulting method reduces the evaluation time for state-of-the art series expansions from tenthousands of core-hours to minutes on a single core with controlled numerical precision. After factorizing the lattice sum over blocks, each block is evaluated by the cheapest available strategy depending on its treewidth $\mathrm{tw}$, while exploiting redundancies through caching. Basic blocks, such as bridges and cycles, admit analytic forms in terms of generalized zeta functions. Series-parallel blocks with $\mathrm{tw}\le 2$ can be computed at linear cost in the number of graph nodes and log-linear cost in the size of the momentum grid using a semi-analytical algebra based on Epstein zeta functions and rapidly decaying Fourier series. Finally, for $\mathrm{tw}>2$, the method is combined with tensor-network bucket elimination yielding polynomial scaling of numerical work and memory in momentum grid size with exponents only growing with $\mathrm{tw}$ rather than with the number of vertices. Through use of FFT, the full momentum grid is recovered at the cost of a single momentum evaluation. We provide a detailed analysis of the precision and runtime of our method against analytic and numerical benchmarks. Going further, we reproduce published Monte Carlo data for the well-known transverse-field Ising model on different 1D, 2D, and 3D lattices, obtaining full agreement within the references' statistical errors. The evaluation of the dispersion relation for a 3D quantum lattice on a momentum grid with 4096 momentum points is achieved within ten minutes on a single core. Building on this mathematical foundation, the companion paper \cite{duft2026} develops a comprehensive perturbative framework for gapped quantum lattice models, with applications reproducing experimental observations. The method is implemented in the open-source, high-performance Graph Zeta Library (GZL).
\end{abstract}

\maketitle

\section{Introduction}

Long-range interactions that decay as a power-law $|\bm x|^{-\nu}$ of the distance $\bm x\in \mathds R^d$ appear at all scales in nature, ranging from the interactions binding together subatomic quarks, over electromagnetic interactions between particles in materials, to gravitational interaction between galaxies \cite{campa2014physics}. For the classical simulation of $N$ long-range interacting particles, however, the evaluation of large scale sums with the number of summands scaling as $N$ is required in the evaluation of forces and sums with $N^2$ summands for energies. As typical material samples include particle numbers on the order of the Avogadro constant, $N\approx 10^{23}$, a challenging numerical problem is encountered when simulating long-range interacting systems. Integral approximations are frequently used, as they don't scale in complexity with number of particles, yet they introduce systematic errors that can qualitatively alter results \cite{buchheit2023exact}.  The situation becomes even more severe when investigating long-range interacting quantum systems. Here, system properties are encoded in a Hamiltonian $H$, whose dimension increases exponentially with the number of lattice sites. 

The precise simulation of interacting quantum lattice models on a classical computer is of central interest in condensed matter physics and theoretical chemistry. Direct approaches such as exact diagonalization are, however, limited to small site numbers $N\approx 25$ as the associated Hilbert space dimension exhibits an exponential growth in $N$. Under usage of symmetries and with supercomputing ressources, $N\approx 50$ has been reached for spin-1/2 \cite{Wietek2018}. Despite these advances, the limit of infinite lattice points forms the focal point of interest. 

A promising route for extracting rigorously the properties of infinite quantum lattice models is based on high-order perturbation theory in the form of linked-cluster expansions. To this end, one decomposes the Hamilonian of a finite quantum lattice model with $N$ lattice sites as $H=H_{0,N}+\lambda H_{\mathrm{int},N}$ with $H_{0,N}$ an unperturbed Hamiltonian with known bounded and equidistant spectrum and $H_{\mathrm{int},N}$ a perturbation with the prefactor $\lambda \in \mathds R$. Subsequently, the problem of determining the low-lying spectrum of $H$ is transformed into computing high-order Taylor series expansions in the parameter $\lambda$. 
The associated perturbation theory in the limit $N\to \infty$ is made accessible by linked-cluster expansions  \cite{adelhardt2024monte,hormann2023projective}. 
They cast the analytic series coefficients exactly in terms of iterated high-dimensional lattice sums associated with graphs, whose dimension increases linearly with the expansion order. The approach is formally exact within the radius of convergence and has been applied to various long-range interacting quantum spin systems such as the famous transverse-field Ising \cite{koziol2024order} or the XXZ model \cite{adelhardt2025quantum}. It provides reliable results even in the case two-dimensional frustrated systems. In such systems, quantum Monte Carlo methods are often not effective due to potential sign problems \cite{sandvik2010computational} and density matrix renormalization group (DMRG) is not effective due to long-range interactions on a higher-dimensional lattice \cite{stoudenmire2012studying}. In addition, graph decomposition techniques allow for the study of both the lowest lying eigenvalue, associated with the ground state, as well eigenvalues of excited states. For this reason, they are currently being explored as impactful tools for studying highly relevant frustrated quantum spin systems, such as long-range interacting Kagome lattices \cite{semeghini2021probing,samajdar2021quantum}. The main obstacle that hinders widespread use of these methods lies in the computation of the resulting high-dimensional, slowly convergent sums where Monte Carlo sums form the current standard \cite{adelhardt2024monte}, suffering from cluster-scale runtimes and stochastic errors. When computing not only ground state properties but also dispersion relations, the resulting sums are rendered oscillatory, and the sums need to be recomputed for every choice of wavevector $\bm k$ in the unit cell of the reciprocal lattice, complicating the computational problem further.

In previous work \cite{buchheit2025epstein_method}, we have shown that iterated lattice sums with a circle graph structure can be computed at linear complexity with node number, avoiding the exponential increase of numerical work of standard approaches, with direct applications in the stability of crystal lattices \cite{robles2025exact}.
The zeta function for circular graphs is given as follows.
\begin{definition}[Circle zeta function]
Let $\Lambda$ be a $d$-dimensional lattice. For $n\in \mathds N$ and $\bm \nu\in \mathds C^n$, we define the circle zeta function as
\[
\zeta^{(n)}(\bm \nu) =  \,\sideset{}{'}\sum_{\bm x_1,\dots,\bm x_{n-1}\in \Lambda} \prod_{j=1}^n  \vert\bm x_j-\bm x_{j-1}\vert^{-\nu_j}, \quad \mathrm{Re}(\nu_j)>d,
\]
where the primed sum excludes summands with coincident positions $\bm x_j-\bm x_{j-1}$ and with an arbitrary $\bm x_0=\bm x_n\in \Lambda$, which can be set to zero due to translational invariance. The circle zeta function can be meromorphically continued to $\bm \nu\in \mathds C^n$.
\end{definition}
In our previous work, we have derived the following integral representation for circle zeta functions \cite[Theorem~2.6]{buchheit2025epstein_method}, which factorizes the sum, thereby providing linear complexity in the number of nodes. It is based on the Epstein zeta function $Z_{\Lambda,\nu}(\bm k)$ \cite{epstein1903theorieI,epstein1903theorieII}, made efficiently computable in \cite{buchheit2024epstein} with a high-performance open-source implementation in \href{https://github.com/epsteinlib/epsteinlib}{EpsteinLib}. Using this special function, the following integral representation for the circle zeta function is obtained.
\begin{theorem}
The circle zeta function admits the integral representation
\[
    \zeta^{(n)}(\bm \nu) = V_\Lambda \int_{\mathrm{BZ}} \prod_{i=1}^n Z_{\Lambda,\nu_i}(\bm k),\quad \mathrm{Re}(\nu_j)\ge d,\quad j=1,\dots,n.
\]
The meromorphic continuation of the function to $\bm \nu \in \mathds C^N$ can be explicitly constructed using the Hadamard integral \cite{gelfand1964generalizedI}.
\end{theorem}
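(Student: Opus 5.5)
The plan is to read the circle zeta function as an $n$-fold lattice convolution evaluated at the origin and to diagonalize it with the lattice Fourier transform. Write
\[
f_\nu(\bm x)=|\bm x|^{-\nu}\quad\text{for }\bm x\in\Lambda\setminus\{0\},\qquad f_\nu(0)=0 .
\]
Set $\bm y_j=\bm x_j-\bm x_{j-1}$. The constraint $\bm x_0=\bm x_n=0$ becomes $\sum_j \bm y_j=0$, and the primed sum becomes the unrestricted sum of $\prod_j f_{\nu_j}(\bm y_j)$ over $\bm y_1,\dots,\bm y_n\in\Lambda$ with $\sum_j\bm y_j=0$. That is, $\zeta^{(n)}(\bm\nu)=(f_{\nu_1}*\dots*f_{\nu_n})(0)$. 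The Epstein zeta function is
\[
Z_{\Lambda,\nu}(\bm k)=\sideset{}{'}\sum_{\bm y\in\Lambda}e^{-2\pi i\bm k\cdot\bm y}|\bm y|^{-\nu},
\]
up to the paper's sign and normalization convention. It is exactly the Fourier series $\hat f_\nu(\bm k)$ on the torus $\mathrm{BZ}\cong\mathds R^d/\Lambda^*$.

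The core identity uses orthogonality of characters:
\[
V_\Lambda\int_{\mathrm{BZ}}e^{-2\pi i\bm k\cdot\bm y}\,d\bm k=\delta_{\bm y,0}\quad\text{for }\bm y\in\Lambda,
\]
since $|\mathrm{BZ}|=1/V_\Lambda$. Expanding $\prod_i Z_{\Lambda,\nu_i}(\bm k)$ as a product of Fourier series and integrating term by term then gives the delta constraint $\sum_j\bm y_j=0$, and hence the claimed formula.

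Step one is to justify this in the absolutely convergent range $\mathrm{Re}(\nu_j)>d$ for all $j$. There each $f_{\nu_j}\in\ell^1(\Lambda)$, so every $Z_{\Lambda,\nu_j}$ is a continuous, uniformly convergent Fourier series. The $n$-fold product series converges absolutely and uniformly on $\mathrm{BZ}$, so Fubini and term-by-term integration are legitimate. Equivalently, $\widehat{f*g}=\hat f\hat g$ holds in $\ell^1$, and Fourier inversion at $0$ finishes the argument.

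Step two is the boundary case $\mathrm{Re}(\nu_j)=d$, where the statement also asserts the formula. Here the lattice sum no longer converges absolutely, so I would define $\zeta^{(n)}$ there by its meromorphic continuation and argue by continuity. Near $\bm k=0$, $Z_{\Lambda,\nu}(\bm k)$ behaves like the singular term $V_\Lambda^{-1}\pi^{\nu-d/2}\Gamma((d-\nu)/2)/\Gamma(\nu/2)\,|\bm k|^{\nu-d}$ plus a smooth remainder. This is the standard Crandall/Epstein decomposition, and I would use it as known from \cite{buchheit2024epstein}. For $\mathrm{Re}(\nu)=d$, $\nu\ne d$, this term is bounded. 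At $\nu=d$ it degenerates into a logarithm $\sim-\log|\bm k|$, which lies in every $L^p(\mathrm{BZ})$. So the product of $n$ such factors is integrable, and dominated convergence as $\mathrm{Re}(\nu_j)\downarrow d$ extends the identity, with the left side understood as the continuation.

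Step three is the meromorphic continuation via Hadamard's finite part. Insert the singular expansion into the integral: each $Z_{\Lambda,\nu_i}$ is $c(\nu_i)|\bm k|^{\nu_i-d}$ plus an entire-in-$\nu$ smooth function of $\bm k$. The product then equals a finite sum of terms $|\bm k|^{\sum_{i\in S}(\nu_i-d)}\times(\text{smooth})$. Taylor-expanding the smooth parts to sufficiently high order at $\bm k=0$ and integrating over a small ball in polar coordinates produces explicit terms $r^{\alpha+m+d-1}$. These give poles at $\sum_{i\in S}(\nu_i-d)+m+d=0$, and the finite-part (Hadamard) integral provides the continuation of the remainder.

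The main obstacle I expect is this last step: bookkeeping the mixed singularities uniformly in $\bm\nu\in\mathds C^n$, and showing that the regularized integral agrees with the lattice sum on the convergent region. I would handle it by checking agreement on the open set $\mathrm{Re}(\nu_j)>d$, where the Hadamard integral reduces to the ordinary one, and then invoking the identity theorem.
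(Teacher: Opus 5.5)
Your proposal is correct and follows essentially the same route as the paper. The paper takes the result from its earlier Epstein-zeta work, and uses the identical mechanism in its own proof of $\hat K_1\ast\hat K_2=\mathcal F_\Lambda(K_1K_2)$: write the cycle sum as an $n$-fold lattice convolution at the origin, expand $\prod_i Z_{\Lambda,\nu_i}$ as Fourier series, apply $V_\Lambda\int_{\mathrm{BZ}}e^{-2\pi i\bm k\cdot\bm y}\,\mathrm d\bm k=\delta_{\bm y,\bm 0}$, and continue by Hadamard finite parts of the $|\bm k|^{\nu-d}$ singularities at $\bm k=\bm 0$.

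Two small corrections, neither affecting validity:
\begin{enumerate}
\item For $n\ge 2$ the cycle sum is in fact still absolutely convergent on $\mathrm{Re}(\nu_j)=d$, since it is dominated by a constant times $\sum_{\bm y\neq\bm 0}|\bm y|^{-2d}\log^{n-2}(2+|\bm y|)$. Your limiting argument therefore identifies the genuine lattice sum there, not merely its continuation.
\item $Z^{\mathrm{reg}}_{\Lambda,\nu}$ is meromorphic rather than entire in $\nu$, with poles at $\nu\in d+2\mathds N_0$ that cancel those of $c_\nu$. Your individual finite-part terms therefore carry apparent poles there, which drop out of the total.
\end{enumerate}
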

The goal of this work is to carry this exponential reduction in complexity over to all graphs appearing in the simulation of long-rang interacting quantum lattice models. The resulting lattice sums and zeta functions then readily permit efficient simulations their properties with far-reaching impact across numerical analysis, theoretical physics and theoretical chemistry. 
This work achieves this goal, by combining techniques from graph theory with methods for generalized zeta functions and high-dimensional lattice sums. It makes both single graph sums and whole corpora of graphs sums, that form the perturbative series coefficients, efficiently computable. 

All numerical experiments reported in this work were carried out with the Graph Zeta Library (GZL)~\cite{gzl2026}, a high-performance open-source implementation of the framework developed in this article, which allows to reproduce all results on a standard laptop. This article is published in parallel with Ref.~\cite{duft2026}, in which the library and the methods developed here are applied at scale to the long-range transverse-field Ising model in dimensions $d=1,2,3$ on several lattice geometries. The present work provides the mathematical framework, the proofs, and the numerical analysis, while Ref.~\cite{duft2026} integrates the numerical method into linked-cluster perturbation theory, thereby creating a comprehensive high-order perturbative scheme applicable for general gapped quantum lattice models, and derives the physical results.

This work is structured as follows. In \cref{sec:main_result}, we define the graph lattice sums and graph zeta functions appearing in the perturbative treatment of quantum lattice models and analyze their basic properties. 
In \cref{sec:block_factorization}, we significantly reduce evaluation complexity by factorizing graph lattice sums over blocks for general interaction kernels. We further use block caching, making use of graph isomorphisms, to avoid recomputation of blocks. The following \cref{sec:basic_blocks} then discusses the efficient evaluation of basic blocks, such as bridges and cycles, which can be especially challenging for power-law kernels with exponents close to the lattice dimension. This problem is solved based on generalized zeta functions such as the Epstein zeta function and allows for evaluation of trees and simple cycles without momenta. For series-parallel blocks, we demonstrate that any graph can be generated by multiplication and convolution of base objects in \cref{sec:sp-construction}. For kernels with power-law tails, singular convolutions arise that are efficiently computed based on a representation in terms of Epstein zeta functions plus a rapidly decaying Fourier series in \cref{sec:algebra}. Highly-connected graphs are made computable through tensor-network bucket elimination in \cref{sec:tensor}. We benchmark our method against known results and exact summation in \cref{sec:benchmarks}, achieving stable convergence even for interaction exponents $\nu$ close to the system dimension. 
In \cref{sec:lrtfim}, we apply our method to the well-known long-range transverse field Ising model and demonstrate that it reproduces Monte Carlo results, obtained within a day on a cluster, within minutes on a laptop. We draw our conclusions and provide an outlook in Section~\ref{sec:outlook}. 

\section{Problem statement and graph lattice sums}
\label{sec:main_result}
We begin by introducing some basic notation, starting with the concept of a Bravais lattice.
\begin{definition}[Lattices]
    We denote a periodic point set $\Lambda=A\mathds Z^d$, with $A\in \mathds R^{d\times d}$ regular, a monoatomic or Bravais lattice. The elementary lattice cell is denoted by $E_\Lambda=A[-1/2,1/2]^d$ with volume $V_\Lambda=|\det A|$. We further define the reciprocal lattice $\Lambda^\ast=A^{-T}\mathds Z^d$ with reciprocal lattice cell $\mathrm{BZ}=A^{-T}\mathds T^d$ with the torus $\mathds T=[0,1)$.
\end{definition}

In the following, we will connect the high-dimensional lattice sums appearing in the perturbative treatment of quantum lattice models with graph theory. We therefore introduce some necessary notation for weighted multi-graphs with terminals, slightly extending the notation from Diestel \cite{diestel2025graph}.

\begin{definition}[Graphs]
We use the following graph-theoretic notation.
\begin{enumerate}
\item A multigraph $G=(V,E,\partial)$ consists of a node set $V\subset\mathds N$, an edge set $E$, and an incidence map
\[
\partial:E\to V\times V,\qquad \partial(e)=(e^-,e^+),\qquad
e^-\neq e^+,
\]
assigning to each oriented edge $e\in E$ its tail $e^-$ and head $e^+$. Two distinct edges are called parallel if their incidences coincide up to orientation, that is, if $\{e_1^-,e_1^+\}=\{e_2^-,e_2^+\}$. The multigraph is called finite if $V$ and $E$ are finite, and simple if it contains no parallel edges. It is called connected if every pair of distinct nodes can be joined by a path, disregarding edge orientations. A connected multigraph without cycles is a tree. A spanning tree of a connected multigraph is a subset $T\subseteq E$ of its edges forming a tree on all of $V$, and the edges outside $T$ are called chords. In the following, we suppress the incidence map and write $G=(V,E)$.
\item A weighted multigraph is a tuple $G=(V,E,K)$, where $(V,E)$ is a multigraph and $K:E\to X$ is a weight function into an arbitrary set $X$.
\item A two-terminal weighted multigraph is a tuple $G=(V,E,K,s,t)$, where $(V,E,K)$ is a weighted multigraph and $s,t\in V$ are distinguished nodes, called the terminals, with $s$ the source and $t$ the sink.
\end{enumerate}
\end{definition}

We then define the main object of interest of this work, namely the high-dimensional iterated lattice sums that form the series coefficients in high-order perturbative series of gapped quantum lattice systems with non-trivial interactions \cite{adelhardt2025quantum,fey2019quantum,adelhardt2024monte}. For general interaction kernels, we call these objects graph lattice sums. For the particularly important case of power-law kernels accompanied by compactly supported corrections, we refer to them as graph zeta functions. We begin our discussion with the general case. 

\begin{definition}[Graph lattice sums]
\label{def:graph-lattice-sum}
Consider a Bravais lattice $\Lambda\subseteq\mathds R^d$ and $G=(V,E,K,s,t)$ a finite connected two--terminal multigraph, where each edge $e\in E$ carries an orientation, written $e=(e^-,e^+)$, and an absolutely summable kernel $K_e\in\ell^1(\Lambda)$. For a wave vector $\bm k\in \mathrm{BZ}$, we define the graph lattice sum ${\mathcal Z}_{\Lambda,G}:\mathrm{BZ}\to\mathds C$ by
\[
{\mathcal Z}_{\Lambda,G}(\bm k)
\;=\;\sum_{\{\bm x_{v}\in\Lambda\}_{v\neq p}}
e^{-2\pi i\delta\bm x_{(s,t)}\cdot\bm k}
\prod_{e\in E}
{K}_{e}\big(\delta\bm x_e\big),
\]
with $\delta\bm x_e=\bm x_{e^+}-\bm x_{e^-}$, where the indexed family runs over $v\in V\setminus\{p\}$ with an arbitrary pinning
node $p\in V$ held fixed. In cases, where the choice of the lattice is immaterial, we use the shortened notation $\mathcal Z_G$.
\end{definition}
Graph lattice sums are used to define physical quantities per lattice site in an infinite quantum lattice system, examples being ground state energy densities or dispersion relations, under the assumption that all physically relevant correlation functions inherit translational invariance from the underlying lattice $\Lambda$ and from the physical model. These functions therefore only depend on relative distances of lattice sites, which results in the removal of the pinning vertex $p$ from summation. The proof of the following elementary properties crucally relies on translational invariance. 

\begin{lemma}[Elementary properties]
\label{lem:choice_of_x}
The following properties hold:
\begin{enumerate}
\item The graph lattice sum is absolutely convergent and hence well-defined. For any spanning tree $(V,T)$ with edges $T\subseteq E$, it is uniformly bounded in $\bm k$ by
\[
|{\mathcal Z}_{\Lambda,G}(\bm k)|\le
\prod_{e\in T} \Vert K_e\Vert_{\ell^1}
\prod_{e\in E\setminus T} \Vert K_e\Vert_{\ell^\infty}.
\]
\item The value of the sum does not depend on the choice of the pin $p\in V$ and the associated position $\bm x_p$.
\item For equal terminals $s=t$, the graph lattice sum is independent of $\bm k$ and of the common terminal. At $\bm k=\bm 0$, the choice of terminals does not alter the value of the sum. 
\end{enumerate}
\end{lemma}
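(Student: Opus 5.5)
For part (1), I will bound the absolute series by a tree-ordered iterated sum. Fix a spanning tree $(V,T)$ and estimate every chord kernel pointwise, $|K_e(\delta\bm x_e)|\le \Vert K_e\Vert_{\ell^\infty}$; this is finite since $\ell^1(\Lambda)\subseteq\ell^\infty(\Lambda)$. The phase factor has modulus one. What remains is
\[
\sum_{\{\bm x_v\}_{v\neq p}}\prod_{e\in T}|K_e(\delta\bm x_e)|.
\]
I will root $T$ at the pin $p$ and change variables. Each non-root vertex $v$ has a unique parent edge $e_v\in T$, and the map $\{\bm x_v\}_{v\neq p}\mapsto\{\delta\bm x_{e_v}\}_{v\neq p}$ is a bijection of $\Lambda^{V\setminus\{p\}}$. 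Its inverse reconstructs $\bm x_v$ as $\bm x_p$ plus a signed sum of increments along the tree path, so it is lattice-valued, and differences of lattice points lie in $\Lambda$. Under this bijection the sum factorizes into $\prod_{e\in T}\sum_{\bm y\in\Lambda}|K_e(\bm y)|=\prod_{e\in T}\Vert K_e\Vert_{\ell^1}$. By Tonelli, the original sum is therefore absolutely convergent, and the stated bound holds uniformly in $\bm k$. Equivalently, one can peel off leaves of $T$ inductively, summing out one leaf variable at a time; I will use the bijection because it is cleaner.

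For part (2), I will use translation invariance. The summand depends only on the differences $\delta\bm x_e$ and on $\delta\bm x_{(s,t)}$. Shifting all $\bm x_v$ by a common $\bm a\in\Lambda$ is a bijection of $\Lambda^{V\setminus\{p\}}$ that leaves the summand unchanged, so the sum does not depend on $\bm x_p$. To change the pin from $p$ to $q$, I will write the sum with pin $p$ as a sum over $\bm x_q$ of the partial sums. Since the series converges absolutely by part (1), the summation can be reorganized freely. I then apply the translation $\bm x_v\mapsto\bm x_v-\bm x_q+\bm x_p$, which maps the configurations with fixed $\bm x_p$ and free $\bm x_q$ bijectively onto the configurations with fixed $\bm x_q=\bm x_p$ and free $\bm x_p$. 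This shows the value with pin $p$ equals the value with pin $q$.

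For part (3), when $s=t$ the phase is $e^{0}=1$, so the sum is independent of $\bm k$ and of which vertex is the common terminal. When $\bm k=\bm 0$, the phase is $1$ for any choice of $s,t$, so the terminals drop out of the summand altogether.

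The only real subtlety is the bookkeeping in (1): the change of variables along the rooted tree must be a bijection onto $\Lambda^{|V|-1}$, and Tonelli must be invoked before any reordering. The later parts rely on the absolute convergence established there.
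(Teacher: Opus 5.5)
Your proposal is correct and follows essentially the same route as the paper: a spanning-tree bound with chords estimated in $\ell^\infty$, a triangular change of variables rooted at the pin that factorizes the tree sum via Tonelli, translation invariance for the pin position and pin choice (your shift $\bm x_v\mapsto\bm x_v-\bm x_q+\bm x_p$ is the paper's substitution), and the trivial phase argument for (3). The only cosmetic difference is that you use the signed increments $\delta\bm x_{e_v}$ directly as new variables. This sidesteps the paper's step of reorienting tree edges and passing to reflected kernels.
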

\begin{proof}
(1) As the modulus of the phase factor is bounded by one, it suffices to establish a bound on
\[
\sum_{\{\bm x_{v}\in\Lambda\}_{v\neq p}}
\prod_{e\in E}
|{K}_{e}\big(\delta\bm x_e\big)|.
\]
First note that, as $K_e$ is summable, it is bounded, with 
\[
\Vert K_e\Vert_{\ell^\infty}\le \Vert K_e\Vert_{\ell^1}.
\]  
Since $G$ is connected, there exists a spanning tree $(V,T)$ with $|V|-1$ edges $T\subset E$ covering the node set $V$. We can then bound all kernels not associated to the spanning tree by their $\ell^{\infty}$ norms,
\[
\prod_{e\in E}
|{K}_{e}\big(\delta\bm x_e\big)| \le  \prod_{e\in T} \vert{K}_{e}\big(\delta\bm x_e\big)\vert \prod_{e\in E\setminus T} \Vert K_e\Vert_{\ell^\infty}.
\]
Root the spanning tree at the pin $p$. After reversing edges and replacing $K_e$ by the reflected kernel $K_e(-\,\cdot\,)$ where necessary, which changes neither the moduli above nor any of the norms, we may assume that every tree edge is oriented away from the root. Then every node $v\neq p$ is the head of exactly one tree edge $e(v)$. The coordinate transform
\[
\bm y_{v}=\bm x_{v}-\bm x_{e(v)^-}, \qquad v\in V\setminus\{p\},
\]
then maps $\Lambda^{\vert V\vert-1}$ bijectively onto itself. As $\Lambda$ is a group, the original positions are recovered recursively from the root via $\bm x_v=\bm x_{e(v)^-}+\bm y_v$, starting from the pinned position $\bm x_p$. After the variable transform, each kernel depends on a single variable. By Tonelli's theorem, we may interchange sum and product, and obtain
\[
\sum_{\{\bm x_{v}\in\Lambda\}_{v\neq p}}
\prod_{e\in T}
\vert{K}_{e}\big(\delta\bm x_e\big)\vert
= \prod_{v\in V\setminus \{p\}} \sum_{\bm y_{v}\in \Lambda}
\vert K_{e(v)}(\bm y_{v})\vert
= \prod_{e\in T} \Vert K_e\Vert_{\ell^{1}}.
\]   
For any spanning tree $(V,T)$, we therefore obtain
\[
\sum_{\{\bm x_{v}\in\Lambda\}_{v\neq p}}
\prod_{e\in E}
\vert {K}_{e}\big(\delta\bm x_e\big)\vert\le \prod_{e\in T} \Vert K_e\Vert_{\ell^1} \prod_{e\in E\setminus T} \Vert K_e\Vert_{\ell^\infty}.
\]
Hence, $\mathcal Z_{\Lambda,G}(\bm k)$ is well-defined and is uniformly bounded in $\bm k$ as above.

(2) We first show independence of the sum on the pin position $\bm x_p$. To this end, translate this position by $\bm \tau \in \Lambda$, obtaining $\bm x_p\to \bm x_p + \bm \tau$. The additional substitution $\bm x_v \to \bm x_v+\bm \tau$, $v\neq p$, then maps $\Lambda^{|V|-1}$ bijectively to $\Lambda^{|V|-1}$. Together with the shift of $\bm x_p$, it leaves all differences $\bm x_{e^+}-\bm x_{e^-}$ and $\bm x_{t}-\bm x_s$ unchanged. Hence the value of $\mathcal Z_{\Lambda,G}(\bm k)$ is unchanged, thus is does not depend on $\bm x_p$. 

Second, we show independence on the choice of pin $p\in V$. Choose a second pin $q\in V$ and, by the already shown independence of the pinned position, place both pins at the same position $\bm\xi\in\Lambda$. We want to show that
\[
\sum_{\{\bm x_v\in\Lambda\}_{v\neq p}} f(X)
=\sum_{\{{\bm x}_v\in\Lambda\}_{v\neq q}} f( X),
\]
where $f$ is the summand function, which depends only on the the differences
\[
X_{i,j}=\bm x_i-\bm x_j,\qquad i,j\in V,
\]
both in the kernels and in the phase factor. Consider the change of variables
\[
\bm x_i={\bm y}_i-{\bm y}_{p}+{\bm y}_{q},
\qquad i\in V.
\]
The transformed positions again lie in $\Lambda$, and the pin constraint is preserved, as $\bm x_p={\bm y}_q=\bm\xi$. The substitution is further bijective and is inverted by the same formula with $p$ and $q$ interchanged, with ${\bm y}_i=\bm x_i-\bm x_{q}+\bm x_{p}$. Moreover, all differences
are preserved,
\[
X_{i,j}=\bm x_i-\bm x_j={\bm y}_i-{\bm y}_j,
\]
so the summands coincide pairwise. Since both sums are absolutely convergent, the reindexed unordered sums agree. Changing notation from $\bm y_v$ to $\bm x_v$ yields the
identity.

(3) If either $s=t$ or $\bm k=\bm 0$, the phase factor equals one and the dependency on both $\bm k$ and on $\bm x_s$ and $\bm x_t$ is removed. 
\end{proof}

\begin{remark}
    The uniform bound in Lemma~\ref{lem:choice_of_x} shows that the summability requirement $K_e\in \ell^1(\Lambda)$ can be relaxed to $K_e\in \ell^\infty(\Lambda)$ with the additional condition that a spanning tree $(V,T)$ exists, such that
    \[
        \prod_{e\in T} \Vert K_e\Vert_{\ell^1}<\infty.
    \]
\end{remark}
Multi-edges naturally appear within perturbative series expansion. However, both for analytical and numerical purposes, it is often useful to study simple graphs instead. The following Lemma shows that multi-edges can be merged into a single edge where the new kernel function is given by the Hadamard product of the kernel functions of the constituents, while accounting for differing edge orientations through sign changes.

\begin{lemma}[Edge merging]
\label{lem:merge}
Let $G=(V,E,K,s,t)$ be a finite connected two-terminal weighted multigraph with kernels $K_e\in\ell^1(\Lambda)$, and let $\tilde G=(V,\tilde E,\tilde K,s,t)$ be the graph obtained by replacing, for each pair of nodes $u<v$ joined by at least one edge, all edges between $u$ and $v$ by a single edge $\tilde e$ with
$\tilde e^-=u$, $\tilde e^+=v$ and kernel
\[
\tilde K_{\tilde e}
=
\prod_{\substack{e\in E\\ \partial(e)=(u,v)}} K_e
\prod_{\substack{e\in E\\ \partial(e)=(v,u)}} K_e(-\,\cdot\,).
\]
Then $\tilde G$ is simple and connected,
$\tilde K_{\tilde e}\in\ell^1(\Lambda)$ for every $\tilde e\in\tilde E$, and
\[
\mathcal Z_{\Lambda,\tilde G}\;=\;\mathcal Z_{\Lambda,G}.
\]
\end{lemma}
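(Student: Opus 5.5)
The plan is to verify the three claims in order: the structural properties of $\tilde G$, the summability of the merged kernels, and then equality of the sums. The equality is a pointwise identity of summands, so the work is almost entirely bookkeeping.

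First, simplicity and connectivity. By construction, every unordered pair $\{u,v\}$ carries at most one edge in $\tilde G$, so $\tilde G$ is simple. Self-loops do not arise, because the definition of a multigraph requires $e^-\neq e^+$. Two nodes are adjacent in $\tilde G$ exactly when they are adjacent in $G$. Any path in $G$, disregarding orientation, therefore maps to a path in $\tilde G$, and $\tilde G$ is connected. Node set and terminals are unchanged, so $\tilde G$ is again a finite two-terminal graph on $V$.

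Second, summability. Each factor $K_e$ lies in $\ell^1(\Lambda)\subseteq\ell^\infty(\Lambda)$. The reflection $K_e(-\,\cdot\,)$ lies in $\ell^1$ with the same norm, because $\bm x\mapsto-\bm x$ is a bijection of $\Lambda$. Pick one factor $K_{e_0}$ in the product defining $\tilde K_{\tilde e}$, which is nonempty since $u$ and $v$ are joined by at least one edge. Bounding all other factors by their $\ell^\infty$ norms gives
\[
\Vert\tilde K_{\tilde e}\Vert_{\ell^1}\le\Vert K_{e_0}\Vert_{\ell^1}\prod_{e\neq e_0}\Vert K_e\Vert_{\ell^\infty}<\infty .
\]
This is the same estimate already used in the proof of Lemma~\ref{lem:choice_of_x}(1).

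Third, equality of the sums. Fix the same pin $p$ for both sums; this is allowed by Lemma~\ref{lem:choice_of_x}(2). The two sums then run over the identical index set $\Lambda^{|V|-1}$. The phase factor $e^{-2\pi i\delta\bm x_{(s,t)}\cdot\bm k}$ depends only on $\bm x_s,\bm x_t$, and these are unchanged, so the phases agree. It therefore suffices to check that the kernel products agree pointwise for every configuration $\{\bm x_v\}$.

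The edge set $E$ partitions according to the unordered pair $\{e^-,e^+\}$. For the class belonging to $u<v$, each edge with $\partial(e)=(u,v)$ contributes $K_e(\bm x_v-\bm x_u)$. Each edge with $\partial(e)=(v,u)$ contributes $K_e(\bm x_u-\bm x_v)=K_e(-\,\cdot\,)(\bm x_v-\bm x_u)$. The product over the class is therefore exactly $\tilde K_{\tilde e}(\delta\bm x_{\tilde e})$ with $\delta\bm x_{\tilde e}=\bm x_v-\bm x_u$, matching the orientation $\tilde e^-=u$, $\tilde e^+=v$. Taking the product over all classes shows that the summands coincide. Both series are absolutely convergent by Lemma~\ref{lem:choice_of_x}(1), which is applicable to $\tilde G$ thanks to the summability just shown. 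Hence the sums are equal for every $\bm k$.

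The only real care point is the orientation bookkeeping for reversed edges. Apart from that, everything is routine, including the observation that Lemma~\ref{lem:choice_of_x} applies to $\tilde G$ once $\tilde K_{\tilde e}\in\ell^1$ is established.
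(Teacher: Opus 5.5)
Your proposal is correct and follows the same route as the paper: simplicity and connectivity by construction, summability of the merged kernels as finite products of $\ell^1$ kernels, and a pointwise identification of the summands via $K_e(\delta\bm x_e)=K_e(\pm(\bm x_v-\bm x_u))$. You add detail the paper leaves implicit, namely the explicit $\ell^1$--$\ell^\infty$ bound for the merged kernel and the remark that the same pin can be used for both sums.
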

\begin{proof}
    By construction, $\tilde G$ is connected, as merging edges does not change which nodes as joined, and simple. The associated graph lattice sum converges absolutely as the merged kernels are summable as finite products of summable kernels. For any edge $e$ connecting $u,v\in V$, we have that 
    $K_e(\delta\bm x_{e})=K_e\big(\pm(\bm x_{v}-\bm x_{u})\big)$ with the sign depending on the edge orientation. The factors of all edges connecting $u$ and $v$ then combine into $\tilde K_e$ and the lattice sums therefore coincide.
\end{proof}

A particularly important case of graph lattice sums occurs, if the kernel $K_e$ is formed by a sum of regularized power-laws, accompanied by a compactly supported short-range correction. We refer to the associated graph lattice sums as graph zeta functions.

\begin{definition}[Graph zeta functions]
    \label{def:graph_zeta_functions}
   Consider the graph lattice sum in \cref{def:graph-lattice-sum} for edge kernels $K_e$ of the form
    \[
    K_e(\bm x) = a_e(\bm x) +\sum_{j=1}^\ell b_{ej}\mathcal K_{\nu_{ej}}(\bm x),
    \]
    with $a_e:\Lambda\to \mathds C$ even and compactly supported, $b_{ej}\in \mathds C$, and $\mathcal{K}_{\nu}(\bm x) = \vert \bm x\vert^{-\nu}$, $\bm x\neq \bm 0$ and $\mathcal K_{\nu}(\bm 0)=0$.
    The summability condition $K_e\in\ell^1(\Lambda)$ then corresponds to $\min_{e,j}\mathrm{Re}(\nu_{ej})>d$. We denote the resulting lattice sum by $\zeta_{\Lambda,G}$, abbreviated $\zeta_G$, and
    refer to it as a graph zeta function.
\end{definition}

Graph zeta functions are closely connected to zeta functions from the classical literature, if all edge kernels are given by a simple power law $\mathcal K_\nu$. For $G$ a simple bridge, and $\mathds Z=\Lambda$, we have $\zeta_{\mathds Z,G}(0)=2\zeta(\nu)$, with $\zeta(\nu)$ the Riemann zeta function. For a simple bridge on a lattice $\Lambda$, the Epstein zeta function is recovered $\zeta_{\Lambda,G}(\bm k)=Z_{\Lambda,\nu}(\bm k)$ \cite{epstein1903theorieI,epstein1903theorieII}.

\begin{remark}
Graph zeta functions allow for several simplifications compared to general graph lattice sums. First, edge orientation becomes irrelevant, as each edge kernel is even. Second, exchanging source and terminal does not alter the lattice sum, and, for $K_e$ real, the graph zeta function is real as well. Finally, joining multi-edges reduces again to a graph zeta function as $a(\bm x) \mathcal K_\nu(\bm x)$ is even and compactly supported and as  $\mathcal K_{\nu} \mathcal K_{\nu'}=\mathcal K_{\nu+\nu'}$ is again a regularized power-law.
\end{remark}

In series expansions of gapped quantum lattice models, the series coefficients are given by combinatorial sums over different graphs. We call the collection of these graphs a corpus and the resulting sums corpus sums. If we are able to compute corpus sums up to high order, we can precisely extract the physical quantities of any gapped quantum lattice model \cite{adelhardt2024monte}.

We first define general interacting quantum lattice models in whose perturbative treatment graph lattice sums and graph zeta functions appear. 
\begin{definition}[Quantum lattice model]
\label{def:quantum-model}
On a Bravais lattice $\Lambda=A\mathds Z^d$, with $A\in\mathds R^{d\times d}$ regular, define the Hilbert space $\mathfrak h=\mathds C^m$, $m\in\mathds N$, called the local state space. The Hilbert space for a quantum system on the finite torus $\Lambda_n=\Lambda/(n\Lambda)$, with $|\Lambda_n|=n^d=N$ sites, is then
given by the tensor product
\[
\mathcal H_n = \bigotimes_{\bm x\in \Lambda_n} \mathfrak{h},\quad
\dim(\mathcal H_n) = m^{|\Lambda_n|}.
\]
Fix a self-adjoint operator $q$ on $\mathfrak h$ with equidistant spectrum bounded from below. We study the low-lying eigenvalues of self-adjoint operators $H_n$ on $\mathcal H_n$, called Hamiltonians, that admit the decomposition
\[
H_n= H_{0,n}+\lambda H_{\mathrm{int},n},\quad
H_{0,n}=\sum_{\bm x\in \Lambda_n} q_{\bm x},
\quad \lambda\in\mathds R,
\]
where $q_{\bm x}$ acts as $q$ on the $\bm x$-th tensor component and as the identity otherwise. The interaction operator $H_{\mathrm{int},n}$ is of the form
\[
H_{\mathrm{int},n} = \frac{1}{2}
\sum_{\substack{\bm x,\bm y\in \Lambda_N\\ \bm x\neq\bm y}}
K_n\big([\bm y-\bm x]\big)\, v_{\bm x\bm y},\quad
K_n([\bm z]) = \sum_{\bm w\in n\Lambda} K(\bm z+\bm w),
\]
with an even summable kernel $K\in \ell^1(\Lambda;\mathds R)$ and its image periodization $K_n$, whose defining sum converges absolutely. Moreover, $v_{\bm x\bm y}=v_{\bm y\bm x}$ is a self-adjoint operator acting only on the Hilbert spaces with positions $\bm x$ and $\bm y$, 
\[
v_{\bm x\bm y} = \sum_{p=1}^q c_p u^{(p)}_{\bm x} u^{(p)}_{\bm y},\quad q\ge 1,\quad c_p\in \mathds R,
\]
with $u_{\bm x}^{(p)}$ a collection of self-adjoint operators only acting on the local Hilbert space with position $\bm x$.
\end{definition}

The low-lying spectrum of the Hamiltonian in the limit $n\to \infty$ encodes the physically most relevant properties of the quantum system at zero temperature. We briefly discuss two such properties, namely the ground state energy density, related to the lowest lying eigenvalue, and the dispersion relation, describing excitations of the ground state. These and related properties will be rendered efficiently computable using the method developed in this work.  

\begin{definition}[Energy density and dispersion relation]
For a finite quantum lattice model as in \cref{def:quantum-model} of size per dimension $n\in \mathds N$, we define the finite  ground-state energy $E_0(\lambda,n)$ and the energy density $e_0(\lambda,n)$ as 
\[
E_0(\lambda,n) =  \min\mathrm{spec}\,H_n,\quad e_0(\lambda,n) = \frac{E_0(\lambda,n)}{n^d}.
\]
For the energy density, the thermodynamic limit of infinite sites $n\to \infty$ is well-defined and reads
\[
e_0(\lambda)=\lim_{n\to \infty} e_0(\lambda,n).
\]
As the interaction $K$ depends only on relative distances and $H_n$ is translation invariant, it is clear that the Hamiltonian is an orthogonal sum over momentum sectors on the discretized Brillouin zone $\mathrm{BZ}_n=(\Lambda^\ast/n)/\Lambda^\ast$,
\[
H_n= \bigoplus_{\bm k\in \mathrm{BZ}_n}  H_n^{(\bm k)}.
\]
The dispersion relation at momentum $\bm k$ is then defined as the energy difference of the first excited state of $H_n^{(\bm k)}$ to the ground state,
\[
\omega(\bm k,\lambda,n) = \min \bigg(\big(\mathrm{spec}~H_n^{(\bm k)}-E_0(\lambda,n)\big)\setminus\{0\}\bigg),\quad \bm k \in \mathrm{BZ_n}.
\]
As $n\to \infty$, it converges to $\omega(\bm k,\lambda)$ with $\bm k\in \mathrm{BZ}$.
\end{definition}

One of the most prototypical quantum lattice models is the long-range transverse-field Ising model, see i.e.\,\cite{adelhardt2024monte} for a review. In this work, it will serve as the primary benchmark for our method. 
\begin{definition}[Long-range transverse-field Ising model]
    \label{def:lrtfim}
    Consider a Bravais lattice $\Lambda\subseteq \mathds R^d$, a local Hilbert space $\mathfrak h=\mathds C^2$, and an even kernel $K\in \ell^1(\Lambda;\mathds R)$. In the sense of the periodization limit from \cref{def:quantum-model}, the Hamiltonian of the long-range transverse-field Ising model in units of the bare gap $2h$, with $h>0$ the transverse field, reads 
    \[
    H=\sum_{\bm x\in \Lambda} \frac{1}{2}\sigma^{x}_{\bm x}-\frac{\lambda}{2} \sum_{\substack{\bm x,\bm y\in \Lambda\\ \bm x\neq\bm y}} K(\bm x-\bm y)\sigma^z_{\bm x} \sigma^z_{\bm y},
    \]
    with the Pauli matrices
    \[
    \sigma^x=\begin{pmatrix}
        0 & 1\\
        1 & 0
    \end{pmatrix},\quad \sigma^z = \begin{pmatrix}
        1 & 0\\
        0 & -1
    \end{pmatrix}.
    \]
    Here, $\lambda=J/(2h)$, with $J\in \mathds R$ the coupling constant. The coupling is called ferromagnetic if $\lambda>0$ and antiferromagnetic if $\lambda<0$.
    The model is connected to \cref{def:quantum-model} via the replacements
    \[
    q_{\bm x}=\frac{1}{2}\sigma^x_{\bm x},\quad v_{\bm x\bm y}=-\sigma^z_{\bm x} \sigma^z_{\bm y}.
    \]
\end{definition}

The numerical work for extracting the spectrum of $H_n$ scales exponentially with the number of sites $N=n^d$. In practice, exact diagonalization can currently treat approximately 30 sites for long-range interacting spin-$1/2$ (local dimension $m=2$), with 50 sites reachable on supercomputing hardware for short-range interactions \cite{Wietek2018}, yet, from a physical point of view, system sizes of the order of the Avogadro constant $N_A\approx 10^{23}$ are the relevant scale. 
Graph decomposition techniques, offer representations of
the Taylor series coefficients of $e_0(\lambda)$ and $\omega(\bm k,\lambda)$ around $\lambda = 0$ in terms of graph lattice sums.

It has been shown in works by Knetter, Uhrig, and Schmidt~\cite{Knetter2000,Knetter2003}, building on the flow-equation method of Wegner~\cite{Wegner1994}, that the eigenvalue problem for the ground-state energy density and the low-lying energy bands can be transformed into the computation of iterated high-dimensional lattice sums with a graph structure, whose summation dimension increases linearly with the expansion order. One particular method for extracting the series coefficients is known as pCUT (perturbative continuous unitary transformations), which was extended to long-range interactions through white-graph expansions~\cite{Coester2015,Fey2016,fey2019quantum}, and has, by now, been applied to a broad range of long-range models (see the review~\cite{adelhardt2024monte} and references therein). Other variants of these perturbative schemes exist, all based on linked-cluster expansions, one important example being pCAT (projective cluster additive transformations) \cite{hormann2023projective}. Any perturbative method casts the Taylor series coefficients of observables as a collection of graphs with associated combinatorial prefactors, called a corpus in the following. This corpus is agnostic to the lattice and to the interaction kernel and needs to be precomputed only once for a specific model and observable, e.g.\ by the methods reviewed in~\cite{adelhardt2024monte}. It expresses the Taylor coefficients at $\lambda=0$ as finite linear combinations of graph lattice sums.

\begin{figure}
    \centering
    \includegraphics[width=\linewidth]{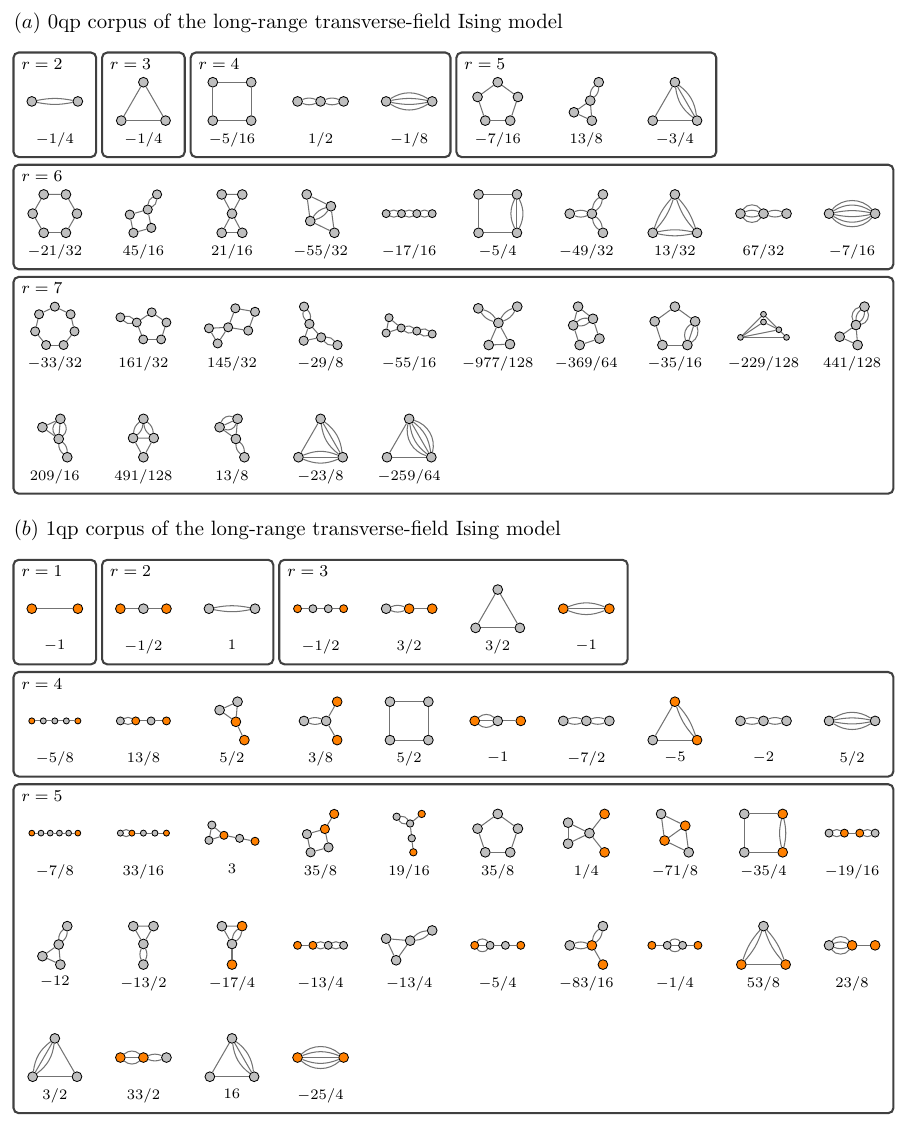}\\
    \caption{Low-order corpora of the  long-range transverse-field Ising model on an arbitrary $d$-dimensional lattice $\Lambda$, obtained via the pCUT method, where all edges $e\in E$ are associated with an identical kernel $K_e\in \ell^1(\Lambda)$. Panel (a) depicts the corpus for the ground state energy density $e_0(\lambda)$ (no terminals) up to expansion order 7  (up to $6d$-dimensional lattice sums), including the graphs $G$ and their prefactors $a_r(G)\in \mathds Q$. Panel (b) depicts the corpus for the single quasiparticle dispersion relation $\omega(\bm k,\lambda)$ (1qp) up to expansion order 5 with terminals colored in orange. In this work, we use state-of-the-art expansions of order $13$ for 0qp (8\,403 graphs, max.~dimension $(r-1) d$)  and order $11$ for 1qp (22\,677 graphs, max.~dimension $rd$), where all graphs are efficiently and precisely computable using our method.}
    \label{fig:tfim_o7}
\end{figure}

\begin{definition}[Corpus sums and perturbative series]
\label{def:corpus}
A corpus is a sequence $(\mathcal C_r)_{r\in\mathds N_0}$ of finite sets $\mathcal C_r$ of finite connected two-terminal multigraphs $G=(V,E,s,t)$, together with combinatorial prefactors $a_r(G)\in\mathds Q$ for $G\in\mathcal C_r$. For a kernel $K\in\ell^1(\Lambda)$, write $(G,K)$ for the two-terminal weighted multigraph obtained from $G$ by assigning the kernel $K$ to every edge. The $r$-th order corpus sum is
\[
c_{r}(\bm k)=\sum_{G\in\mathcal C_r}
a_r(G)\,\mathcal Z_{\Lambda,(G,K)}(\bm k),
\]
well-defined by \cref{lem:choice_of_x}, and the corpus sums assemble into the formal power series
\[
F(\bm k,\lambda)=\sum_{r=0}^{\infty} c_r(\bm k)\,\lambda^{r}
\]
in the expansion parameter $\lambda$. For graphs with equal terminals $s=t$, the corpus sums are independent of $\bm k$. 
\end{definition}

The corpora for the long-range transverse field Ising model are provided to in the Graph Zeta Library \cite{gzl2026}. As an example, we display the low-order corpora for this model in \cref{fig:tfim_o7}, which were obtained through the pCUT method, see \cite{duft2026} for details. The corpus for the energy density (0qp) is shown in panel (a), while the corpus for the single particle dispersion relation (1qp) is provided in (b).  For observables of quantum lattice models as in
\cref{def:quantum-model}, corpus sums yield the formal power series around $\lambda=0$ in the infinite-lattice limit. For the ground-state energy density, only graphs with equal terminals $s=t$ appear, and the corpus sums carry no momentum dependence, while the dispersion relation requires graphs with distinct terminals that are evaluated at momentum $\bm k$. 

\begin{remark}
The order $r=0$ contribution in the observables corresponds to a constant, typically excluded from the corpus, as it requires no sum evaluation. For the LRTFIM, as defined in \cref{def:lrtfim}, we have
\[
e_0(\lambda)=-\frac{1}{2}+\sum_{r=2}^\infty c_r \lambda^r,\quad \omega(\bm k,\lambda)= 1+\sum_{r=1}^\infty c_r(\bm k) \lambda^r.
\]
Note that the order $1$ term in the energy density vanishes.
\end{remark}

With precomputed corpora, the exact formal power series of obervables such as energy density or single-quasiparticle dispersion relation around $\lambda=0$ is determined through graph lattice sums or, in the case of combinations of short-range and power-law kernels, graph zeta functions.
In principle, this allows for an exact order-by-order expansion of the physically relevant observables of infinite quantum lattice systems, if the graph lattice sums can be reliable supplied, with vast application potential. The following three main problems arise in their evaluation.

\begin{enumerate}
    \item Evaluation complexity for each graph lattice sum increases exponentially with number of nodes for simple lattice truncations. 
    \item    For power-law kernels with exponent $\nu$, convergence becomes arbitrary slow as $\nu\searrow d$.
    \item Resolving the full Brillouin zone requires recomputation for every single momentum point.
    \item The number of graphs in the corpus grows exponentially with order.
\end{enumerate}

In the following sections, we solve issues $(1)-(3)$ and severely reduce the impact of $(4)$. As we will show later, this reduces the runtime for state-of-the-art perturbative expansions from tenthousands of core-hours for Monte Carlo sums for a single $\bm k$ value (i.e.~ in \cite{adelhardt2024monte}) to minutes on a standard laptop for the full momentum grid at improved precision. We begin with a factorization of the high-dimensional graph lattice sums into blocks.

\section{Block factorization of graph lattice sums}
\label{sec:block_factorization}

The numerical work of simple discretization approaches for evaluating graph lattice sums, replacing $\Lambda$ by a torus $\Lambda_N$ or a truncated lattice scale as $\mathcal O(N^{(|V|-1)d})$ and quickly become infeasible for large numbers of graph nodes $|V|$. We here show first, that for general absolutly summable interaction kernels $K_e\in \ell^1(\Lambda)$, translational invariance of the lattice $\Lambda$ can be leveraged to factorize the sums over blocks, thereby reducing the exponential scaling in the number of nodes $|V|$ to scaling with the largest number of nodes within a block. In particular, for $G$ a tree, scaling with $|V|$ reduces from exponential to linear in the node count. 

For the decomposition of graph lattice sums, we first define blocks, cutvertices and the block-cut tree \cite{diestel2025graph}.

\begin{definition}[Cutvertices, blocks, and the block--cut tree]
Let $G=(V,E,K,s,t)$ be connected. A cutvertex is a vertex whose removal disconnects $G$. A block is a maximal connected subgraph without a cutvertex (a maximal $2$-connected subgraph, or a single bridge). The blocks partition the edge set $E$, and a vertex is a cutvertex precisely when it is shared by more than one block. The blocks and cutvertices of $G$ form a tree, the block--cut tree $T(G)$, with a block $B$ joined to a cutvertex $c$ whenever $c\in B$.
\end{definition}

Based on translational invariance of the lattice model, we have established in \cref{lem:choice_of_x} (2) that graph lattice sums do not depend on the choice of the pinning vertex $p$, whose position is removed from summation. 
Altering the position of the pin provides us with the possibility of factorize a single graph lattice sum into a product of two sums on the respective blocks.

\begin{lemma}[Cutvertex splitting]
Let the graph $G=(V,E,K,s,t)$ be given by two subgraphs $G_1=(V_1,E_1,K_1,s_1,t_2)$ and $G_2=(V_2,E_2,K_2,s_2,t_2)$ connected at a single cut-vertex $p\in V$, with $K_1$ and $K_2$ the restriction of $K$ onto the reduced edge sets. The graph lattice sum then factorizes over the two subgraphs,
\[
\mathcal Z_G(\bm k)=\mathcal Z_{G_1}(\bm k)\,\mathcal Z_{G_2}(\bm k).
\]
For $s\in V_1$ and $p$ in $V_2$, we route the terminals via the cutvertex, $(s_1,t_1)=(s,p)$ and $(s_2,t_2)=(p,t)$. If, without restriction of generality, both $s,t\in V_1$ then $G_1$ keeps these terminals and $G_2$ is an equal momentum-independent attachment, i.e.,
 \[
\mathcal Z_G(\bm k)=\mathcal Z_{G_1}(\bm k)\,\mathcal Z_{G_2}(\bm 0).
\]
\end{lemma}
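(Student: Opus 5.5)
We take the cutvertex $p$ itself as the pinning node. This is allowed by \cref{lem:choice_of_x}~(2), which also lets us fix $\bm x_p=\bm 0$. Because $V_1\cap V_2=\{p\}$ and $E=E_1\sqcup E_2$, the free variables split into two disjoint families, $\{\bm x_v\}_{v\in V_1\setminus\{p\}}$ and $\{\bm x_v\}_{v\in V_2\setminus\{p\}}$. The kernel product then splits as
\[
\prod_{e\in E_1}K_e(\delta\bm x_e)\prod_{e\in E_2}K_e(\delta\bm x_e),
\]
where each factor depends only on the variables of its own subgraph, with $\bm x_p=\bm 0$ shared.

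The next step is to split the phase factor. There are two cases.

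\emph{Separated terminals.} Suppose $s\in V_1$ and $t\in V_2$. Then
\[
\delta\bm x_{(s,t)}=(\bm x_t-\bm x_p)+(\bm x_p-\bm x_s),
\]
so
\[
e^{-2\pi i\delta\bm x_{(s,t)}\cdot\bm k}=e^{-2\pi i(\bm x_p-\bm x_s)\cdot\bm k}\,e^{-2\pi i(\bm x_t-\bm x_p)\cdot\bm k}.
\]
These are exactly the phases of $G_1$ with terminals $(s,p)$ and of $G_2$ with terminals $(p,t)$. The full summand is therefore a product $f_1(\{\bm x_v\}_{V_1})\,f_2(\{\bm x_v\}_{V_2})$.

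\emph{Both terminals in one part.} Suppose $s,t\in V_1$. Then the phase depends only on $V_1$-variables, and $f_2$ carries no phase. By \cref{lem:choice_of_x}~(3), $f_2$ is the summand of $\mathcal Z_{G_2}$ for any choice of equal terminals, which is the same as $\mathcal Z_{G_2}(\bm 0)$.

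It remains to justify the factorization of the double sum. Each $G_i$ is connected, since any path in $G$ between two vertices of $V_i$ cannot leave $V_i$ except through $p$, and it would have to return through $p$ as well. Each $G_i$ also carries $\ell^1$ kernels, so by \cref{lem:choice_of_x}~(1) the sums $\sum|f_1|$ and $\sum|f_2|$ are finite. Tonelli's theorem then shows that $\sum|f_1f_2|=\sum|f_1|\sum|f_2|<\infty$. Fubini's theorem then lets us rewrite the full sum as an iterated sum that factors into $\mathcal Z_{G_1}(\bm k)\,\mathcal Z_{G_2}(\bm k)$, respectively $\mathcal Z_{G_1}(\bm k)\,\mathcal Z_{G_2}(\bm 0)$. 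Finally, because the subgraph sums are pin-independent by \cref{lem:choice_of_x}~(2), using $p$ as the pin of each $G_i$ gives their canonical values.

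The main obstacle is only bookkeeping, not analysis. We must check that the phase of $G$ has the orientation $\bm x_t-\bm x_s$ and that it decomposes through $p$ with the sign conventions of \cref{def:graph-lattice-sum}. The splitting $\bm x_t-\bm x_s=(\bm x_t-\bm x_p)+(\bm x_p-\bm x_s)$ does exactly this. The remaining case, with both terminals in $V_2$, follows by symmetry.
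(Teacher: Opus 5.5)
Your proposal is correct and follows essentially the same route as the paper: pin at the cutvertex $p$ with $\bm x_p=\bm 0$, split the free variables and the edge product over $V_i\setminus\{p\}$ and $E_i$, factor the phase through $\bm x_p$, and justify the interchange of sums by absolute convergence via \cref{lem:choice_of_x}. Your explicit checks that each $G_i$ is connected and that $\sum|f_1f_2|=\sum|f_1|\sum|f_2|$ supply rigor that the paper leaves implicit.
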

\begin{proof}
    By \cref{lem:choice_of_x}, set $p$ as the pinning vertex und fix its position to the origin, $\bm x_p=0$. 
    As $E_1$ and $E_2$ are disjoint, the product over edges factorizes over the subgraphs
    \[
    \prod_{e\in E} {K}_{e}\big(\delta\bm x_e\big)= 
    \bigg(\prod_{e\in E_1} {K}_{e}\big(\delta\bm x_e)\bigg)\bigg(\prod_{e\in E_2}{K}_{e}\big(\delta\bm x_e)\bigg)
    \]
    Define the reduced node sets $\tilde V=V\setminus \{p\}$, $\tilde V_1=V_1\setminus\{p\}$, and $\tilde V_2=V_2\setminus \{p\}$. As $p$ is a cutvertex, $\tilde V_1\cap\tilde V_2 =\varnothing$, and 
    \[
    \sum_{\{\bm x_{v}\in\Lambda\}_{v\in \tilde V}}=\sum_{\{\bm x_{v}\in\Lambda\}_{v\in \tilde V_1}} \sum_{\{\bm x_{v}\in\Lambda\}_{v\in \tilde V_2}}.
    \]
    For $s\in V_1$ and $t\in V_2$, rewrite the exponential as 
    \[
    e^{-2\pi i (\bm x_t-\bm x_s)\cdot \bm k} = e^{-2\pi i (\bm x_t-\bm x_p)\cdot \bm k}e^{-2\pi i (\bm x_p-\bm x_s)\cdot \bm k}.
    \]
    The only joint node position appearing in both edge products is $\bm x_{e^\pm}=\bm x_p$, set to zero by translational invariance. The sums over $v\in V_1$ can be therefore be moved inside the products over $e\in E_1$ and the sum over $v\in V_2$ inside the product over $e\in E_2$ including the corresponding exponential factors. The exchange of orders is here allowed by Fubini, see \cref{lem:choice_of_x} (1). If both terminals are associated with the same subgraph, choose $s_1=s$ and $t_2=t$ and choose arbitrary but equal terminals for $G_2$, which do not influence its value as the momentum dependency vanishes.
\end{proof}

The lemma above lets us decompose graph lattice sums at cutvertices. Applied recursively, it yields the block factorisation of graph lattice sums, for which we first define the spine and the attachments.

\begin{definition}[Spine and attachments]
Let $G=(V,E,K,s,t)$ be connected, with a set of blocks $\mathcal B(G)$ and a block--cut tree $T(G)$. The blocks on the unique path in $T(G)$ between a block containing $s$ and a block containing $t$, taken in order $B_1,\dots,B_m$, form the spine $\mathcal S(G)=\{B_1,\dots,B_m\}$. Consecutive spine blocks meet at the cutvertices $p_i=V(B_i)\cap V(B_{i+1})$ with $p_0=s$ and $p_m=t$, the spine block $B_i$ is assigned the terminals $(p_{i-1},p_i)$, routing the external momentum $s=p_0\to p_1\to\cdots\to p_m=t$. The remaining blocks form the attachments $\mathcal A(G)=\mathcal B(G)\setminus\mathcal S(G)$, so that $\mathcal B(G)=\mathcal S(G)\sqcup\mathcal A(G)$. For an attachment $B$, we write $\mathcal Z_B(\bm0)$ for the graph lattice sum of $B$ at $\bm k=\bm0$, which is independent of any choice of terminals.
\end{definition}

\begin{theorem}[Block factorisation]
    \label{thm:block_factorization}
Let $G=(V,E,K,s,t)$ be connected, with spine $\mathcal S(G)$ and attachments $\mathcal A(G)$ as in the preceding definition. Then, for $\bm k\in\mathrm{BZ}$,
\[
\mathcal Z_G(\bm k)
=\prod_{B\in\mathcal S(G)}\mathcal Z_B(\bm k)\,
 \prod_{B\in\mathcal A(G)}\mathcal Z_B(\bm0).
\]
Only the spine factors depend on $\bm k$. In particular, when $s=t$ every factor can be evaluated at $\bm k=\bm0$ and
\[
\mathcal Z_G(\bm k)=\prod_{B\in\mathcal B(G)}\mathcal Z_B(\bm0)=\mathcal Z_G(\bm 0).
\]
\end{theorem}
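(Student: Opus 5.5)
The plan is to prove the theorem by induction on the number of blocks $|\mathcal B(G)|$, splitting off one leaf block of the block--cut tree at a time with the cutvertex splitting lemma. If $G$ has a single block, then $G$ is its own spine with terminals $(s,t)$, there are no attachments, and the identity is trivial. This also covers the degenerate case of a single vertex, where the sum is empty and equals one.

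For the induction step, assume $|\mathcal B(G)|\ge 2$. The block--cut tree $T(G)$ is then a tree with at least two block nodes, so it has at least two leaves, and every leaf is a block. I distinguish two cases.

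\emph{First case: some leaf block $B$ is not on the spine.} Let $c$ be its unique cutvertex. Write $G=G_1\cup G_2$, where $G_2=B$ and $G_1$ is the union of the remaining blocks, so that $G_1\cap G_2=\{c\}$. The spine path in $T(G)$ runs between blocks containing $s$ and $t$ and avoids $B$.

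Both terminals lie in $G_1$. This holds unless a terminal lies in $B\setminus\{c\}$. In that situation the spine would have to end in $B$, with one exception: $s=t\in B\setminus\{c\}$. That case needs separate handling. There the phase factor is one by \cref{lem:choice_of_x}(3), so I may reassign both terminals to $c$ without changing any value. The spine then becomes a single block containing $c$, and at $\bm k$ arbitrary all factors are momentum independent anyway.

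With both terminals in $G_1$, the lemma gives $\mathcal Z_G(\bm k)=\mathcal Z_{G_1}(\bm k)\mathcal Z_B(\bm 0)$. Moreover, $G_1$ has the same spine as $G$ with the same terminal routing, and the attachments $\mathcal A(G)\setminus\{B\}$. The induction hypothesis then closes this case.

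\emph{Second case: every leaf block lies on the spine.} Then $T(G)$ is a path $B_1,c_1,\dots,B_m$ with $m\ge2$, and it coincides with the spine. I split at $p_1$ with $G_1=B_1$ and $G_2=B_2\cup\dots\cup B_m$. The lemma routes the terminals as $(s,p_1)$ and $(p_1,t)$, and the second assignment is exactly the spine routing of $G_2$. This gives $\mathcal Z_G(\bm k)=\mathcal Z_{B_1}(\bm k)\mathcal Z_{G_2}(\bm k)$, and induction finishes the argument.

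The main obstacle is making sure the terminal bookkeeping is consistent. The spine is defined via ``a block containing $s$'', which is ambiguous when $s$ is itself a cutvertex. I plan to fix a canonical choice: take the shortest path in $T(G)$. Blocks containing $s$ that lie off this path are then attachments. For them the cutvertex lemma applies with both terminals on the other side, since $s$ is the cutvertex itself. So the order of splitting does not matter.

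Finally, the $s=t$ statement follows in two steps. The spine reduces to a single block $B$ with equal terminals, or is empty when $s$ is a cutvertex, in which case every block is treated as an attachment. Then $\mathcal Z_B(\bm k)=\mathcal Z_B(\bm 0)$ by \cref{lem:choice_of_x}(3), and the same lemma gives $\mathcal Z_G(\bm k)=\mathcal Z_G(\bm 0)$.
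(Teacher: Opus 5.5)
Your proposal is correct and is essentially the paper's proof: induction on $|\mathcal B(G)|$, splitting off a leaf block of $T(G)$ with the cutvertex lemma. The split-off factor is $\mathcal Z_B(\bm 0)$ for an attachment and $\mathcal Z_B(\bm k)$ with routed terminals for a spine endpoint. Your ordering (attachments first, then the path case) and your shortest-path convention for the spine are more explicit bookkeeping than the paper gives. One small remark: the ``exception'' $s=t\in B\setminus\{c\}$ in your first case is vacuous, since then $B$ is the only block containing $s$ and would itself be the spine.
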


\begin{proof}
We use induction on the number of blocks $|\mathcal B(G)|$. If $G$ consists of a single block, then the statement holds trivially. Otherwise the tree $T(G)$ has a leaf block $B$, meeting the rest of $G$ at a single cutvertex $p$. Write $G=G'\cup_p B$. The cutvertex lemma at $p$ then gives
\[
{\mathcal Z}_G(\bm k)={\mathcal Z}_{G'}(\bm k)\,{\mathcal Z}_B ,
\]
where the split-off factor ${\mathcal Z}_B$ equals ${\mathcal Z}_B(\bm k)$, with terminals $(p_{m-1},t)$, if $B$ is the spine endpoint containing $t$ (symmetrically for $s$), and equals ${\mathcal Z}_B(\bm0)$ if $B$ is an attachment. The reduced graph $G'$ has one fewer block, with $\mathcal S(G')=\mathcal S(G)\setminus\{B\}$ and $\mathcal A(G')=\mathcal A(G)\setminus\{B\}$, and inherits the terminals $s$ and, if $B=B_m$ was peeled, $p_{m-1}$ in place of $t$. By induction, we obtain the desired factorization.
\end{proof}

After having established that graph lattice sums can be factorized into blocks, we briefly comment on how articulation vertices are discovered and how the block-cut tree is constructed. Given a connected multigraph $G=(V,E,K,s,t)$, its articulation vertices and the biconnected blocks can be found in linear time and storage using the classical depth-first-search algorithm by Hopcroft and Tarjan \cite{Tarjan1972,HopcroftTarjan1973}, implemented for instance in NetworkX. Note here that, as vertex biconnectivity does not depend on edge multiplicities, the decomposition of the multigraph corresponds to that of the underlying simple graph. The block-cut tree is then assembled by connecting each block to the articulation vertices it contains. The terminals are mapped to their tree nodes and the unique tree path between them, forming the spine, is extracted by breadth-first search in linear time. The complete combinatorial layer for the block-cut decomposition is linear in time and space.

A direct consequence of the block-factorization theorem is that any tree graph can be written in terms of the lattice Fourier transform of the interaction kernel. For the particular case of power-law interactions, this reduces to the Epstein zeta function. The following section discusses these and other elementary blocks.

\section{Evaluation of basic blocks}
\label{sec:basic_blocks}

In this section, we discuss the evaluation of graph lattice sums and graph zeta functions for the two most basic, and, in practice, most often occuring blocks, that arise after block factorization. These are
\begin{enumerate}
    \item bridges (two nodes connected by a finite number of edges),
    \item cycles without terminals.
\end{enumerate}
For typical quantum lattice models, such as the LRTFIM, these blocks cover the large majority of all blocks. In addition, for power-law interactions, they belong to the hardest objects to compute with direct methods, as the error under lattice discretization falls off arbitrarily slowly, as the interaction exponent approaches the system dimension. Fortunately, these objects can be computed to full precision with the procedures shown below. From the block-factorization \cref{thm:block_factorization} then follows that any graph, whose blocks are of this form, can be computed with linear numerical complexity.

A central object is the lattice Fourier transform of the interaction kernel. We begin by defining the standard Fourier transform and the lattice Fourier transform.
\begin{definition}[Fourier transform and lattice Fourier transform]
    Let $f\in L^1(\mathds R^d)$. We then define its Fourier transform as 
    \[
    (\mathcal F f)(\bm \xi) = \hat f(\bm \xi) = \int_{\mathds R^d} e^{-2\pi i \bm x\cdot \bm \xi} f(\bm x)\,\mathrm d \bm x,\quad \bm \xi \in \mathds R^d.
    \]
    For a Bravais lattice $\Lambda\subseteq \mathds R^d$, consider a function $f\in \ell^{1}(\Lambda)$. Its lattice Fourier transform is defined as 
    \[
    (\mathcal F_\Lambda f)(\bm k)=\hat f(\bm k)=\sum_{\bm x\in \Lambda} e^{-2\pi i \bm x\cdot \bm k} f(\bm x),\quad \bm k\in \mathrm{BZ}.
    \]
    By duality, both the Fourier transform and the lattice Fourier transform extend to tempered distributions \cite{hoermander1966introduction}.
\end{definition}
If the kernel $K$ has compact support or decays superalgebraically, such that it can be sufficiently well approximated on a truncated grid, then its lattice Fourier $\hat K$ transform is analytic on the Brillouin zone and grid values can be extracted with log-linear complexity through use of the Fast Fourier Transforms (FFT).

The situation, however, becomes significantly more complicated, if the kernel includes tails that only decay at an algebraic rate, i.e., for graph zeta functions.
The prototypical case here is the regularized power law kernel $\mathcal K_\nu$ with exponent $\nu\in \mathds C$ appearing in quantum spin models with power-law interactions such as the LRTFIM.  The resulting lattice Fourier transform is called the Epstein zeta function \cite{epstein1903theorieI,epstein1903theorieII}.
Initially introduced by Paul Epstein in 1903, it forms the natural generalization of the Riemann zeta function to oscillatory lattice sums in higher dimensions  while preserving a functional equation. We have recently discussed the analytic properties and the efficient computation of the Epstein zeta function in \cite{buchheit2024epstein}, together with an implementation in the publicly available high-performance C library \href{https://github.com/epsteinlib/epsteinlib/epsteinlib}{EpsteinLib}.
\begin{definition}[Epstein zeta function]
\label{def:epstein}
    Let $\Lambda$ be a $d$ Bravais lattice, $\bm k\in \mathds R^d$, and $\nu\in \mathds C$. The Epstein zeta function is then defined as
    \[
    Z_{\Lambda,\nu}(\bm k) = \sum_{\bm x\in \Lambda} e^{-2\pi i \bm x\cdot \bm k}\,\mathcal K_\nu(\bm x),\quad \mathrm{Re}(\nu)>d.
    \]
    The Epstein zeta function can be meromorphically continued to $\nu\in \mathds C$.
\end{definition}
The Epstein zeta function is the prototypical lattice Fourier transform of a homogeneous kernel. When additionally including higher-order derivatives in $\bm k$, as discussed in \cite{buchheit2026zeta}, general anisotropies can be described, allowing for the evaluation of lattice Fourier transforms of arbitrary homogeneous kernels. Lattice Fourier transforms of general interaction kernels can, therefore, be described by Epstein zeta functions, potentially including derivatives, describing long-range tails, plus a rapidly decaying Fourier series. 

It is important to note that the Epstein zeta function exhibits power-law singularities at the reciprocal lattice points $\Lambda^\ast = A^{-T}\mathds Z^d$, that require special attention. The following Lemma discusses the holomorphic extension of the function in both arguments, it's singularity structure, and the removal of singularities. It is a combined version of the results from \cite[Theorems 2.14, 2.18]{buchheit2024epstein}.
\begin{lemma}[Holomorphy and singularity structure of the Epstein zeta function]
    \label{lem:holomorphy}
    Define the set $D_L\subseteq \mathds C^d$ as the  following intersection of $d$-dimensional complex cones with origins at $L\subseteq \mathds R^d$,
$$
D_L=\{\bm u\in\mathds C^d:|\Re{\bm u}-\bm z|>|\Im{\bm u}|\ \forall \bm z\in L\}.
$$
\begin{enumerate} \item The Epstein zeta function can be holomorphically extended to  
$$
(\nu,\bm k)\in 
\mathds C\times D_{\Lambda^*}.
$$
For $\bm k\in\Lambda^*$, the Epstein zeta function is holomorphic in $\nu\in\mathds C\setminus\{d\}$ with a simple pole in $\nu=d$.  

\item The singularity of the Epstein zeta function at $\bm k=\bm 0$ can be isolated as
    \[
    Z_{\Lambda,\nu}(\bm k) = Z_{\Lambda,\nu}^{\mathrm{reg}}(\bm k)+\hat s_\nu(\bm k)/V_\Lambda,
    \]
    where $Z_{\Lambda,\nu}^{\mathrm{reg}}$ is analytic on $(\mathds R^d\setminus \Lambda^\ast)\cup \{\bm 0\}$ and where $\hat s_\nu(\bm k)$ is the distributional Fourier transform of $s_\nu(\bm x)=\vert \bm x\vert^{-\nu}$, which reads for $\nu\in \mathds C\setminus (2\mathds N +d)$,
    \[
    \hat s_\nu(\bm k) = c_\nu \vert \bm k\vert^{\nu-d},\qquad c_\nu=\pi^{\nu-d/2}\frac{ \Gamma ((d-\nu)/2)}{\Gamma(\nu/2) }.
    \]
    In case that $\nu=d+2n$, $n\in \mathds N$, the Fourier transform is uniquely defined up to a polynomial of order $2n$. We adopt the choice,
\[\hat s_{d+2n}(\bm y)= \frac{\pi^{n+d/2}}{\Gamma(n+d/2)}\frac{(-1)^{n+1}}{n!} ( \pi \bm k^2 )^{n} \log (\pi  \bm k^{2}).
    \]
\item For $\nu\in d+2 \mathds N_0$, the regularised Epstein zeta function can be holomorphically extended to  $\bm k \in D_{\Lambda^\ast\setminus\{\bm 0\}}$,
and, outside these particular $\nu$ values, the funtion extends holomorphically to
\[
(\nu,\bm k) \in \big(\mathds C\setminus (d+2\mathds N_0)\big)\times D_{\Lambda^\ast\setminus\{\bm 0\}}.
\]
\end{enumerate}
\end{lemma}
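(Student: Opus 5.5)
The plan is to work from the Crandall/Riemann-type representation of the Epstein zeta function, obtained by splitting the power law with the incomplete gamma function. For $\mathrm{Re}(\nu)>d$, write
\[
|\bm x|^{-\nu}=\frac{\pi^{\nu/2}}{\Gamma(\nu/2)}\Big(\int_0^{1}+\int_1^\infty\Big)t^{\nu/2-1}e^{-\pi t|\bm x|^2}\,\mathrm dt .
\]
The $\int_1^\infty$ part gives a lattice sum of $\Gamma(\nu/2,\pi|\bm x|^2)|\bm x|^{-\nu}e^{-2\pi i\bm x\cdot\bm k}$. Since this decays superexponentially in $\bm x$, it is entire in $(\nu,\bm k)\in\mathds C\times\mathds C^d$.

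For the $\int_0^1$ part, apply Poisson summation to the Gaussian $e^{-\pi t|\bm x|^2}e^{-2\pi i \bm x\cdot\bm k}$, taking care to subtract the excluded term $\bm x=\bm 0$. This produces a sum over $\bm g\in\Lambda^\ast$ of $V_\Lambda^{-1}$ times $\int_0^1 t^{\nu/2-1-d/2}e^{-\pi|\bm g+\bm k|^2/t}\,\mathrm dt$, minus the elementary term $\frac{\pi^{\nu/2}}{\Gamma(\nu/2)}\frac{2}{\nu}$. The sign convention depends on $\bm k\mapsto-\bm k$; this is harmless since $\Lambda^\ast$ is symmetric.

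Each reciprocal summand equals an upper incomplete gamma function $\Gamma((d-\nu)/2,\pi|\bm g+\bm k|^2)\,|\bm g+\bm k|^{\nu-d}$, up to constants. For $\bm g+\bm k$ with $|\mathrm{Re}(\bm g+\bm k)|>|\mathrm{Im}(\bm g+\bm k)|$, the complex quadratic form $(\bm g+\bm k)^2$ has positive real part. Consequently the $t$-integral over $(0,1]$ converges for all $\nu$ and is holomorphic in both variables, and these terms decay like Gaussians in $|\bm g|$. This is exactly where the cone domain $D_{\Lambda^\ast}$ enters. Locally uniform convergence on compacta, combined with Weierstrass' theorem and Hartogs, then gives joint holomorphy on $\mathds C\times D_{\Lambda^\ast}$, which proves part (1) away from $\bm k\in\Lambda^\ast$.

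For $\bm k\in\Lambda^\ast$, one summand has $\bm g+\bm k=\bm 0$. Its integral is $\int_0^1 t^{(\nu-d)/2-1}\,\mathrm dt=2/(\nu-d)$, which yields the simple pole at $\nu=d$, while $2/\nu$ is cancelled by $1/\Gamma(\nu/2)$. This settles the remainder of (1).

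For (2) and (3), isolate the $\bm g=\bm 0$ term near $\bm k=\bm 0$. I would write $\int_0^1=\int_0^\infty-\int_1^\infty$. The full integral $\int_0^\infty t^{(\nu-d)/2-1}e^{-\pi\bm k^2/t}\,\mathrm dt$ equals $\Gamma((d-\nu)/2)(\pi \bm k^2)^{(\nu-d)/2}$. Combined with the prefactor $\pi^{\nu/2}/\Gamma(\nu/2)$ and $1/V_\Lambda$, this gives exactly $c_\nu|\bm k|^{\nu-d}/V_\Lambda=\hat s_\nu(\bm k)/V_\Lambda$. The identity first holds for $\mathrm{Re}(\nu)<d$ and then extends by meromorphic continuation. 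The leftover $-\int_1^\infty t^{(\nu-d)/2-1}e^{-\pi\bm k^2/t}\,\mathrm dt$ is entire in $\bm k$ near $\bm 0$ and meromorphic in $\nu$. Its possible poles lie where $(\nu-d)/2\in\mathds N_0$, seen by expanding the exponential. Setting $Z^{\mathrm{reg}}=Z-\hat s_\nu/V_\Lambda$ then gives analyticity on $D_{\Lambda^\ast\setminus\{\bm 0\}}$ and joint holomorphy for $\nu\notin d+2\mathds N_0$, which is (3).

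The main obstacle is the exceptional set $\nu=d+2n$. There $\Gamma((d-\nu)/2)$ has a pole, and it must cancel against the pole of the $\int_1^\infty$ remainder coming from the $\bm k^{2n}$ Taylor coefficient. I would take the limit $\nu\to d+2n$ of the sum of these two pieces. The Laurent expansion $(\pi\bm k^2)^{(\nu-d)/2}=(\pi\bm k^2)^n\big(1+\tfrac{\nu-d-2n}{2}\log(\pi\bm k^2)+\dots\big)$ combines with the residue of $\Gamma$ at $-n$, namely $(-1)^n/n!$, to produce the stated $(\pi\bm k^2)^n\log(\pi\bm k^2)$ term with the given constant, and $\frac{\pi^{\nu/2}}{\Gamma(\nu/2)}$ evaluates to $\pi^{n+d/2}/\Gamma(n+d/2)$. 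The remaining finite parts are a polynomial of degree $2n$ in $\bm k$, which is absorbed into $Z^{\mathrm{reg}}$. This reflects the stated freedom in choosing $\hat s_{d+2n}$ and establishes holomorphy of $Z^{\mathrm{reg}}$ in $\bm k\in D_{\Lambda^\ast\setminus\{\bm 0\}}$ at these $\nu$. Checking the sign and constant of the logarithmic term carefully is the delicate bookkeeping step.
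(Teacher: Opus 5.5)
Your proposal is correct and follows essentially the route behind this lemma. The paper gives no proof of its own; it cites Theorems~2.14 and~2.18 of \cite{buchheit2024epstein}, which rest on the same ingredients you use: the Crandall-type splitting into incomplete gamma functions, Poisson summation on the short-time part, the cone domains arising from $\mathrm{Re}\big((\bm g+\bm k)^2\big)>0$, and the $\bm g=\bm 0$ reciprocal term supplying $\hat s_\nu/V_\Lambda$. The sign you flag as delicate works out: near $\nu=d+2n$ one has $\Gamma\big((d-\nu)/2\big)\approx -\frac{(-1)^n}{n!}\frac{2}{\nu-d-2n}$, and combined with the $\frac{\nu-d-2n}{2}\log(\pi\bm k^2)$ term and the prefactor $\pi^{n+d/2}/\Gamma(n+d/2)$ this gives exactly the stated $\frac{(-1)^{n+1}}{n!}(\pi\bm k^2)^n\log(\pi\bm k^2)$ term.
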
  

Note that an additional lattice shift vector can be included in the definition of the Epstein zeta function to account for lattices with multiple atoms per unit cell, see for instance \cite{buchheit2024epstein}. 

Given an analytically known lattice Fourier transform of the interaction kernel, such as for graph zeta functions, bridge blocks are available as analytic objects, computable via FFT for compactly supported kernels $a(\bm x)$, accompanied by EpsteinLib evaluation for the Epstein zeta functions for the power-law contributions $\mathcal K_\nu$.
\begin{lemma}[Analytic block I: Bridge]
    \label{lem:bridge}
    Let $G=(\{1,2\},E,K,1,2)$ with $V=\{1,2\}$ and $E$ a non-zero set of edges connecting $1$ and $2$. The graph lattice sum then reads
    \[
    \mathcal Z_G(\bm k) = \mathcal F_{\Lambda} \bigg(\prod_{e\in E} \tilde K_e\bigg)(\bm k),
    \]
    where $\tilde K_e=K_e$ if $(e^-,e^+)=(1,2)$ and $\tilde K_e=K_e(-\bm \cdot)$ for the reverse orientation $(e^-,e^+)=(2,1)$. 
    In the case of graph zeta functions with single power-law kernels $K_e=\mathcal K_{\nu_e}$, one obtains
    \[
    \mathcal \zeta_G(\bm k)=Z_{\Lambda,\mu}(\bm k), \quad \mu=\sum_{e\in E}\nu_e.
    \]
    The case of equal terminals $s=t$ (no momentum dependency) is equivalently obtained by setting $\bm k=\bm 0$ above.
\end{lemma}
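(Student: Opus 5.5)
The plan is to reduce to a single edge and then read off the definition. First, I apply the edge-merging \cref{lem:merge} to $G$. Since $V=\{1,2\}$, all edges of $E$ join the same pair $u=1<v=2$, so the merged graph $\tilde G$ has a single edge $\tilde e$ with orientation $(1,2)$ and kernel
\[
\tilde K_{\tilde e}=\prod_{\partial(e)=(1,2)}K_e\prod_{\partial(e)=(2,1)}K_e(-\,\cdot\,)=\prod_{e\in E}\tilde K_e .
\]
By \cref{lem:merge}, this kernel lies in $\ell^1(\Lambda)$ and $\mathcal Z_{\Lambda,G}=\mathcal Z_{\Lambda,\tilde G}$.

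Next, I evaluate $\mathcal Z_{\Lambda,\tilde G}$ directly from \cref{def:graph-lattice-sum}. Pin $p=1$ with $\bm x_1=\bm 0$, which is allowed by \cref{lem:choice_of_x}(2). The only summation variable is then $\bm x_2\in\Lambda$, and $\delta\bm x_{\tilde e}=\delta\bm x_{(s,t)}=\bm x_2$. Hence
\[
\mathcal Z_{\Lambda,\tilde G}(\bm k)=\sum_{\bm x_2\in\Lambda}e^{-2\pi i\bm x_2\cdot\bm k}\,\tilde K_{\tilde e}(\bm x_2),
\]
which is exactly $\mathcal F_\Lambda(\tilde K_{\tilde e})(\bm k)$. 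The sum converges absolutely because $\tilde K_{\tilde e}\in\ell^1$.

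For graph zeta functions with $K_e=\mathcal K_{\nu_e}$, every kernel is even, so $\tilde K_e=\mathcal K_{\nu_e}$. Pointwise, $\prod_e\mathcal K_{\nu_e}(\bm x)=|\bm x|^{-\sum_e\nu_e}=\mathcal K_\mu(\bm x)$ for $\bm x\neq\bm 0$. At $\bm x=\bm 0$ both sides vanish, since $E\neq\varnothing$ and each factor is zero there. Moreover, $\mathrm{Re}(\mu)>d$ holds because each $\mathrm{Re}(\nu_e)>d$. The lattice Fourier transform of $\mathcal K_\mu$ is then $Z_{\Lambda,\mu}(\bm k)$ by \cref{def:epstein}.

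Finally, for equal terminals $s=t$, the phase factor equals one, so the sum coincides with the above expression at $\bm k=\bm 0$, consistent with \cref{lem:choice_of_x}(3).

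I do not expect a real obstacle. The only point needing care is the handling of orientations and of the value at $\bm x=\bm 0$ in the power-law product; both are settled by the reflection convention in \cref{lem:merge} and the convention $\mathcal K_\nu(\bm 0)=0$.
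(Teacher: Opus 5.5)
Your proposal is correct and follows essentially the same route as the paper: merge the parallel edges via \cref{lem:merge}, pin $p=1$ at $\bm x_1=\bm 0$ by \cref{lem:choice_of_x}, read off the lattice Fourier transform of $\prod_{e\in E}\tilde K_e$, and use $\mathcal K_{\nu_1}\mathcal K_{\nu_2}=\mathcal K_{\nu_1+\nu_2}$ for the power-law case. Your explicit checks that both sides vanish at $\bm x=\bm 0$ and that $\mathrm{Re}(\mu)>d$ are details the paper leaves implicit.
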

\begin{proof}
    Reduce the multi-graph to a simple graph by \cref{lem:merge} on edge merging and set $p=s=1$ and $\bm x_p=\bm 0$ by \cref{lem:choice_of_x}. The reduced kernel is then $\prod_{e\in E} \tilde K_e$ and the graph lattice sum corresponds by definition to its lattice Fourier transform. The equivalent result for the power-law kernel $\mathcal K_{\nu}$ is obtained after noticing that
    \[
    \mathcal K_{\nu_1}\mathcal K_{\nu_2} = \mathcal K_{\nu_1+\nu_2},
    \]
    where edge orientation is irrelevant, yielding the result.
\end{proof}
This result readily generalizes to general graph zeta functions, as shown in \ref{thm:convolution}.

The Epstein zeta function has recently been used by some of the authors to efficiently compute many-body lattice sums   \cite{buchheit2025epstein_method}. The resulting method currently serves as the foundation for an ongoing investigation into the influence of many-body interactions on the stability of crystal lattices in theoretical chemistry, with first results in \cite{robles2025exact}. 

In \cite[Theorem~2.6]{buchheit2025epstein_method}, we have shown that graph zeta function for circle graphs without momentum (equal terminals) can be computed to full precision at linear numerical cost with $\mathcal O(|V|-1)$. This representation directly generalizes to general summable interaction kernels and provides us with the second analytically available block.
\begin{corollary}[Analytic block II: Cycle without momentum]
    \label{cor:analytic_block_II}
Let $G=(V,E,K)$ be a simple circle graph with all edges oriented either clockwise or anticlockwise. The graph lattice sum then admits the representation
\[
{\mathcal Z}_G= V_\Lambda \int_{\mathrm{BZ}}\prod_{e\in E} {\hat { K}}_e(\bm k)\,\mathrm d \bm k.
\]
\end{corollary}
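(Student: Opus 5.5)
Label the cycle nodes $1,\dots,n$ with $n=|V|\ge 2$, and orient every edge as $e_j=(j-1,j)$, indices taken modulo $n$ (so $0\equiv n$). Since $G$ has no terminals, \cref{lem:choice_of_x} lets us pin $p=n$ at $\bm x_n=\bm x_0=\bm 0$. The phase factor is then one. Introduce edge differences $\bm y_j=\bm x_j-\bm x_{j-1}$ for $j=1,\dots,n-1$. Then $\bm y_n=\bm x_n-\bm x_{n-1}=-\sum_{j<n}\bm y_j$, and the map $(\bm x_1,\dots,\bm x_{n-1})\mapsto(\bm y_1,\dots,\bm y_{n-1})$ is a bijection of $\Lambda^{n-1}$. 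This gives
\[
\mathcal Z_G=\sum_{\bm y_1,\dots,\bm y_{n}\in\Lambda}\delta_{\bm 0}\Big(\sum_{j=1}^n\bm y_j\Big)\prod_{j=1}^n K_{e_j}(\bm y_j),
\]
that is, the $n$-fold lattice convolution $(K_{e_1}\ast\cdots\ast K_{e_n})(\bm 0)$.

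The key identity is the lattice Fourier representation of the Kronecker delta on $\Lambda$: for $\bm x\in\Lambda$,
\[
V_\Lambda\int_{\mathrm{BZ}}e^{-2\pi i\bm x\cdot\bm k}\,\mathrm d\bm k=\delta_{\bm x,\bm 0}.
\]
To verify it, substitute $\bm x=A\bm m$ and $\bm k=A^{-T}\bm t$ with $\bm t\in\mathds T^d$. Then $\bm x\cdot\bm k=\bm m\cdot\bm t$, and the Jacobian $1/V_\Lambda$ cancels the prefactor. Insert this identity for the constraint and interchange sum and integral. This is justified by Fubini, since
\[
\sum_{\bm y_j}\prod_j|K_{e_j}(\bm y_j)|=\prod_j\|K_{e_j}\|_{\ell^1}<\infty
\]
and $\mathrm{BZ}$ has finite measure. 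The integrand then factorizes as
\[
\prod_j\sum_{\bm y_j}e^{-2\pi i\bm y_j\cdot\bm k}K_{e_j}(\bm y_j)=\prod_j\hat K_{e_j}(\bm k),
\]
which gives the claim.

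Two bookkeeping points remain. The first is orientation: if all edges are oriented anticlockwise, each $\bm y_j$ flips sign, so each factor becomes $\hat K_{e_j}(-\bm k)$. The substitution $\bm k\mapsto-\bm k$ maps $\mathrm{BZ}$ onto itself modulo $\Lambda^\ast$, and the integrand is $\Lambda^\ast$-periodic, so the integral is unchanged. This is why the statement needs a consistent orientation; a mixed orientation would instead produce the factors $\hat K_e(-\bm k)$. The second is that simplicity requires $n\ge3$. For $n=2$ the statement still holds via \cref{lem:merge}, because the Fourier transform of a product equals a convolution integral over $\mathrm{BZ}$.

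The only real obstacle is justifying the sum–integral interchange, and $\ell^1$-summability of all kernels settles it directly. No analytic continuation is needed here, unlike the power-law case in \cite{buchheit2025epstein_method}.
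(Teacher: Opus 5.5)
Your proof is correct and is essentially the argument the paper intends. The paper simply defers to the power-law case of \cite[Theorem~2.6]{buchheit2025epstein_method}, which rests on the same passage to edge-difference variables, the lattice Fourier representation of the Kronecker delta (the identity also used in the proof of \cref{cor:convolution}), and Fubini, and your orientation bookkeeping matches the paper's remark that reversed edges amount to replacing $K_e$ by $K_e(-\,\cdot\,)$. As you note, for $\ell^1$ kernels no analytic continuation is needed.
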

The proof for the extension of the above theorem for general kernel functions, required for arbitrary graph zeta functions, follows in complete analogy to the power-law case in \cite[Theorem~2.6]{buchheit2025epstein_method}. Of course, the theorem generalizes to arbitrary edge orientations by replacing kernels $K_e$ by $K_e(-\cdot)$, if required.

For graph zeta functions, where the lattice Fourier transform is singular at $\bm k=\bm 0$, this integral can be computed efficiently by separating the integration domain into pyramids and then performing a Duffy transform on each subdomain \cite{duffy1982quadrature}. For compactly supported interaction kernels, a simple Gauss quadrature is sufficient as the lattice Fourier transform is analytic.

\begin{table}[t]
\centering
{\setlength{\tabcolsep}{4pt}\scriptsize
\begin{tabular}{@{}r rrrr rrrr@{}}
\toprule
& \multicolumn{4}{c}{$0$qp} & \multicolumn{4}{c}{$1$qp (finite $\bm k$)}\\
\cmidrule(lr){2-5}\cmidrule(lr){6-9}
$r$ & graphs & blocks & unique & reuse\,[\%]
    & graphs & blocks & unique & reuse\,[\%]\\
\midrule
 1 & ---     & ---      & ---     & ---  & 1       & 1        & 1      & 0.0 \\
 2 & 1       & 1        & 1       & 0.0  & 2       & 3        & 2      & 33.3\\
 3 & 1       & 1        & 1       & 0.0  & 4       & 7        & 4      & 42.9\\
 4 & 3       & 4        & 3       & 25.0 & 10      & 21       & 7      & 66.7\\
 5 & 3       & 4        & 4       & 0.0  & 24      & 55       & 13     & 76.4\\
 6 & 10      & 17       & 9       & 47.1 & 65      & 166      & 25     & 84.9\\
 7 & 15      & 26       & 13      & 50.0 & 176     & 476      & 55     & 88.4\\
 8 & 44      & 86       & 29      & 66.3 & 513     & 1\,479   & 132    & 91.1\\
 9 & 89      & 181      & 50      & 72.4 & 1\,535  & 4\,571   & 365    & 92.0\\
10 & 254     & 548      & 123     & 77.6 & 4\,806  & 14\,775  & 1\,068 & 92.8\\
11 & 633     & 1\,406   & 267     & 81.0 & 15\,541 & 48\,446  & 3\,423 & 92.9\\
12 & 1\,900  & 4\,333   & 752     & 82.6 & ---     & ---      & ---    & --- \\
13 & 5\,450  & 12\,600  & 2\,005  & 84.1 & ---     & ---      & ---    & --- \\
\midrule
total & 8\,403 & 19\,207 & 2\,602 & 86.5 & 22\,677 & 70\,000 & 3\,423 & 95.1\\
\bottomrule
\end{tabular}\vspace*{.1cm}}
\caption{Block census of the perturbative corpora for the long-range transverse-field Ising model. At each order $r$, the corpus graphs decompose into biconnected blocks, of which the unique column counts those distinct up to isomorphism, retaining edge multiplicities. The reuse column describes the percentage of block evaluations served from cache. The $1$qp corpus is evaluated at finite $\bm k$, where spine blocks are additionally distinguished by their terminal roles and momentum. The total row uses the globally distinct block count.}
\label{tab:block-census}
\end{table}

Before moving on to the evaluation of more general blocks, it is useful to briefly evaluate the statistics of blocks appearing in the 0qp and 1qp corpora for the LRTFIM, shown to low-order in \cref{fig:tfim_o7}, which will serve as the primary benchmarks for this work. We will focus on state-of-the-art studies for the LRFTIM using order $r=13$ pCUT expansions for the 0qp and order $r=11$ expansions for the 1qp sector. The related graph statistics are summarized in \cref{tab:block-census}. For 0qp, all graphs appearing at order $r$ are at most $(r-1)d$ dimensional lattice sums. Up to order 13, the 0qp corpus consists of $8\,403$ graphs (all without terminals, hence no $\bm k$ dependency) decomposable into $19\,207$ biconnected blocks. Of all blocks, $7\,902$ ($41.1\,\%$) are simple bridges (analytic block type I, computable via \cref{lem:bridge}), while $5\,611$ blocks ($29.2\,\%$) are cycles (analytic block type II, computable via \cref{cor:analytic_block_II}). Therefore, the analytic objects discussed in this section already account for $70.4\,\%$ of all occuring blocks in the 0qp corpus. 

The 1qp corpus till order 11 consists of $22\,677$ graphs, with summation dimension at order $r$ at most $rd$, of which only $4\,898$ have equal terminals, everything else depending on momentum $\bm k$. Of the total number of $70\,000$ blocks, $44\,439$ ($63.5\,\%$) are bridges and $14\,273$ ($20.4\,\%$) are simple cycles, leading to a total of $83.9\,\%$ of analytically accessible blocks. Also note that these analytic blocks and graphs built from them are computable to full precision at linear cost in the number of nodes $|V|$, in contrast to exponential scaling for direct summation approaches. In addition, they belong to the set of least connected graphs, which converge slowest under direct summation approaches. The method provided in this section solves this issue.

Block-factorization has an important second consequence, besides the exponential reduction in computational cost for many individual graphs, such as trees. When evaluating not just single graphs, but a whole corpus from \cref{def:corpus}, necessary for extracting physical quantities, the number of rendundant graphs is very small, yet the number of recurring blocks is high. Therefore, we apply block-caching, meaning that all blocks across the corpus are evaluated only once and then reused. This compresses the $19\,207$ occurrences of the 0qp corpus to $2\,574$ distinct blocks and the $70\,000$ occurrences of the 1qp corpus to $2\,868$, corresponding to a reuse factor of $7.5$ and $24.4$, respectively. The resulting reuse is highly non-uniform. A bridge is characterised by its combined interaction kernel alone, so the $7\,902$ bridges of the 0qp corpus reduce to $6$ distinct objects and the $44\,439$ bridges of the 1qp corpus to $11$. Meanwhile, the $5\,611$ and $14\,273$ cycles reduce to $115$ and $211$. Therefore, after caching, the analytic block types therefore account for only $4.7\,\%$ and $7.7\,\%$ of the blocks that need to be computed. The next section focusses on the category of graphs that appears dominantly after block caching.

\section{Construction of series-parallel blocks}
\label{sec:sp-construction}

In the previous sections, we have discussed the decomposition of graph lattice sums into blocks and the subsequent evaluation of the most basic, and most often occuring blocks. In this section, we show that, using the lattice Fourier transform of the kernel, or the Epstein zeta function in the power-law case, as the base object, the most-important  and most often occuring remaining blocks can be built. It turns out that this task can be achieved from two binary operations, pairwise multiplication and convolution of objects. To this end, we first define the periodic convolution on the torus.

\begin{definition}
    Let $f,g:\mathrm{BZ}\to \mathds C$ be integrable and periodic. We then define the periodic convolution on the reciprocal unit cell as
    \[
    (f\ast g)(\bm k) = V_\Lambda \int_{\mathrm{BZ}} f(\bm k-\bm p) g(\bm p)\,\mathrm d \bm p.
    \]
\end{definition}

We then notice that the convolution of two lattice Fourier transform of kernels in the wavevector argument corresponds to pointwise multiplication of the kernels.
\begin{corollary}
\label{cor:convolution}
Let $K_1,K_2\in\ell^1(\Lambda)$. Then
\[
\hat K_1\ast \hat K_2 = \mathcal F_\Lambda (K_1 K_2).
\]
    In particular, for $\nu_1,\nu_2\in \mathds C$ with $\mathrm{Re}(\nu_i)>d$, $i=1,2$, we have
    \begin{align*}
    Z_{\Lambda,\nu_1}\ast Z_{\Lambda,\nu_2} &= Z_{\Lambda,\nu_1+\nu_2},
    \end{align*}
    which can be meromorphically extended to $\nu_i\in \mathds C$.
    Furthermore, with $1$ the constant function, we have for $\nu\in \mathds C$,
    \[
        1\ast Z_{\Lambda,\nu}=0.
    \]
\end{corollary}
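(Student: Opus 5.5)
The plan is to prove the general identity first by direct computation. Then the Epstein identity follows by specialization and analytic continuation, and the statement about the constant function comes from the same continuation argument. Since $K_1,K_2\in\ell^1(\Lambda)$, the series $\hat K_i(\bm k)=\sum_{\bm x}e^{-2\pi i\bm x\cdot\bm k}K_i(\bm x)$ converge absolutely and uniformly on $\mathrm{BZ}$. They are therefore continuous and $\Lambda^\ast$-periodic, so the periodic convolution is well defined. Inserting both series gives
\[
(\hat K_1\ast\hat K_2)(\bm k)=V_\Lambda\sum_{\bm x,\bm y\in\Lambda}K_1(\bm x)K_2(\bm y)e^{-2\pi i\bm x\cdot\bm k}\int_{\mathrm{BZ}}e^{-2\pi i(\bm y-\bm x)\cdot\bm p}\,\mathrm d\bm p .
\]
Exchanging sum and integral is justified by Fubini, because the double series is dominated by $\Vert K_1\Vert_{\ell^1}\Vert K_2\Vert_{\ell^1}|\mathrm{BZ}|$. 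The orthogonality relation $V_\Lambda\int_{\mathrm{BZ}}e^{-2\pi i\bm z\cdot\bm p}\,\mathrm d\bm p=\delta_{\bm z,\bm 0}$ for $\bm z\in\Lambda$ holds since $|\mathrm{BZ}|=1/V_\Lambda$ and $\bm z\cdot\bm p$ pairs $\Lambda$ with $\Lambda^\ast$ via $A^{-T}$. It collapses the double sum to $\sum_{\bm x}K_1(\bm x)K_2(\bm x)e^{-2\pi i\bm x\cdot\bm k}=\mathcal F_\Lambda(K_1K_2)(\bm k)$, and $K_1K_2\in\ell^1$ because $\ell^1\subset\ell^\infty$.

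For the Epstein case, I take $K_i=\mathcal K_{\nu_i}$ with $\mathrm{Re}(\nu_i)>d$. Then $\mathcal K_{\nu_1}\mathcal K_{\nu_2}=\mathcal K_{\nu_1+\nu_2}$, including the value $0$ at the origin. With \cref{def:epstein} this gives $Z_{\Lambda,\nu_1}\ast Z_{\Lambda,\nu_2}=Z_{\Lambda,\nu_1+\nu_2}$ on that half-space.

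For the meromorphic extension, the right-hand side $Z_{\Lambda,\nu_1+\nu_2}(\bm k)$ is already meromorphic in $(\nu_1,\nu_2)$ by \cref{lem:holomorphy}. I will define the left-hand side for general $\nu_i$ as this continuation; by uniqueness of analytic continuation, it is the only consistent extension.

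To give the convolution integral itself a meaning under continuation, I would use \cref{lem:holomorphy}(2). The splitting $Z_{\Lambda,\nu}=Z^{\mathrm{reg}}_{\Lambda,\nu}+c_\nu|\bm k|^{\nu-d}/V_\Lambda$ (periodized) shows that the integrand has only power singularities at lattice points. The integral is therefore holomorphic in $\nu_i$ wherever it converges, and it continues meromorphically as a Hadamard finite-part integral. This step is the main obstacle: one must check that the finite-part regularization agrees with the unique continuation. I would argue through uniqueness, since both sides are meromorphic in each $\nu_i$ separately and agree on an open set.

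For $1\ast Z_{\Lambda,\nu}=0$, observe that the constant $1$ is the lattice Fourier transform of $\delta_{\bm 0}\in\ell^1(\Lambda)$. The general identity then gives $1\ast Z_{\Lambda,\nu}=\mathcal F_\Lambda(\delta_{\bm 0}\mathcal K_\nu)$ for $\mathrm{Re}(\nu)>d$. Since $\mathcal K_\nu(\bm 0)=0$ by convention, this vanishes identically. Formally, this is the case $\nu_1=0$ of the previous identity, because $Z_{\Lambda,0}(\bm k)\equiv -1$ (constant Fourier coefficient of $\mathcal K_0=1-\delta_{\bm 0}$). The zero function continues to the zero function, so the identity persists for all $\nu\in\mathds C$ under the meromorphic continuation. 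For $\nu$ with $\mathrm{Re}(\nu)\le d$, the convolution is interpreted as the finite-part integral of $Z_{\Lambda,\nu}$ over $\mathrm{BZ}$, which is again fixed by uniqueness of continuation.
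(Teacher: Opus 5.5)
Your proof is correct and is essentially the paper's: Fubini together with the orthogonality relation $V_\Lambda\int_{\mathrm{BZ}}e^{-2\pi i\bm z\cdot\bm p}\,\mathrm d\bm p=\delta_{\bm z,\bm 0}$, specialization to $K_i=\mathcal K_{\nu_i}$, the constant $1$ as the transform of $\delta_{\bm 0}$ combined with $\mathcal K_\nu(\bm 0)=0$, and extension to $\nu_i\in\mathds C$ by the identity theorem. Drop your two asides, though, because both give wrong values at $\nu=0$: with $Z_{\Lambda,0}\equiv-1$ the case $\nu_1=0$ would read $-(1\ast Z_{\Lambda,\nu})=Z_{\Lambda,\nu}$ rather than $0$, and the finite-part reading gives $V_\Lambda\int_{\mathrm{BZ}}Z_{\Lambda,0}(\bm p)\,\mathrm d\bm p=-1\neq0$; the reason is that distributionally $Z_{\Lambda,\nu_1}\to\mathcal F_\Lambda(1-\delta_{\bm 0})$, i.e.\ $-1$ plus a Dirac comb on $\Lambda^\ast$, which the pointwise value $-1$ misses, so the extended identities must be read purely as analytic continuation from the region of convergence, which is what your main argument and the paper actually use.
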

\begin{proof}
    Both sums in the convolution converge absolutely and we can exchange the order of summation and integration, yielding
    \[
    (\hat K_1\ast \hat K_2)(\bm k)=\sum_{\bm x_1,\bm x_2\in \Lambda}K_1( \bm x_1)K_2(\bm x_2) e^{-2\pi i \bm k\cdot \bm x_1} V_\Lambda \int_{\mathrm{BZ}} e^{-2\pi i \bm k\cdot(\bm x_2-\bm x_1)}\,\mathrm d \bm k.
    \]
    The first result then follows after noting that for lattice vectors $\bm x_1,\bm x_2\in \Lambda$,
    \[
    V_\Lambda \int_{\mathrm{BZ}} e^{-2\pi i \bm k\cdot(\bm x_2-\bm x_1)}\,\mathrm d \bm k = \delta_{\bm x_1,\bm x_2},
    \]
    with $\delta_{\bm x,\bm y}$ the Kronecker delta. The result for the Epstein zeta function follows as a particular case for the interaction kernel $K_j=\mathcal K_{\nu_j}$. Furthermore,
    \[
     1\ast Z_{\Lambda,\nu} = \sum_{\bm x\in \Lambda\setminus\{\bm 0\}} \frac{1}{\vert \bm x\vert^\nu} \delta_{\bm 0,\bm x} = \bm 0,
    \]
    as the origin is excluded from the sum due to the regularization of the kernel. Both results extend meromorphically to $\nu_i\in \mathds C$ by the identity theorem.
\end{proof}

We then show that the set of graph-lattice sums is closed under pointwise multiplication and convolution. In particular, starting from bridges as base objects, a large set of more complicated graphs can be obtained. We first begin by defining the two binary operations and then relate their action to the corresponding graph composition rules.

\begin{definition}[Graph operations]
\label{def:graph_definitions}
 Let ${\mathcal Z}_{G_1},\mathcal Z_{G_2}:\mathrm{BZ}\to\mathds C$
be two graph lattice sums on a Bravais lattice $\Lambda$.  
We define two binary operation and one unary operation, producing a new graph lattice sum
$\mathcal Z_G:\mathrm{BZ}\to\mathds C$.

\begin{enumerate}
    \item \emph{Pointwise multiplication:}
    \[
      \mathcal Z_G(\bm k)=\mathcal Z_{G_1}(\bm k)\,\mathcal Z_{G_2}(\bm k),
    \]
    \item \emph{Convolution:}
    \[
      \mathcal Z_G(\bm k)=(\mathcal Z_{G_1}\ast \mathcal Z_{G_2})(\bm k),
    \]
    \item \emph{Zero-momentum evaluation:}
    \[
      \mathcal Z_G(\bm k)\leftarrow \mathcal Z_G(\bm 0).
    \]
\end{enumerate}
\end{definition}
The following theorem related these binary operations directly to two well-known graph-theoretic composition rules, namely serial and parallel composition at the terminals, see \cite[Sec.~8.3]{Bodlaender1998} for a review.
\begin{theorem}[Graph composition from graph operations]
\label{thm:zeta_vs_graph_ops_weighted}
Consider two graph lattice sums $\mathcal Z_{G_1}$ and $\mathcal Z_{G_2}$ 
with connected two--terminal weighted multigraphs
\[
G_i=(V_i,E_i,K_i,s_i,t_i),\quad i=1,2,
\]
such that, after relabeling, $V_1\cap V_2=\varnothing$.
 The operations of Definition~\ref{def:graph_definitions}
then realize the standard two--terminal graph compositions as follows.

\begin{enumerate}
\item[\textup{(i)}] \textup{(Series composition $\Leftrightarrow$ pointwise multiplication)}
Form the series composition $G=G_1\circ G_2$ by identifying $t_1$ with
$s_2$ and declaring $(s_1,t_2)$ as terminals,  taking the disjoint union of
edge multisets with
exponents inherited from $G_1$ and $G_2$. Then,
\[
\mathcal Z_{G}(\bm k)=\mathcal Z_{G_1}(\bm k) \mathcal Z_{G_2}(\bm k).
\]
\item[\textup{(ii)}] \textup{(Parallel composition $\Leftrightarrow$ convolution)}
Form the parallel composition $G=G_1\parallel G_2$ by identifying the respective terminal $s_1=s_2$ and $t_1=t_2$,
 again taking the disjoint union of edge multisets with inherited exponents. Then, 
\[
\mathcal Z_{G}(\bm k)=(\mathcal Z_{G_1}\ast\mathcal Z_{G_2})(\bm k).
\]
\item[\textup{(iii)}] \textup{($1$--sum attachment $\Leftrightarrow$
multiplication by a zero--momentum evaluation)}
First forget the terminals of $G_2$, regarding it as an unpointed weighted
multigraph.  Fix a vertex $v\in V(G_1)$ and a vertex $w\in V(G_2)$, and form
$G$ as the $1$--sum obtained by identifying $v$ with $w$.  
The terminals of $G$ are thus those of $G_1$.
Then, 
\[
\mathcal Z_G(\bm k)=\mathcal Z_{G_1}(\bm k)\mathcal Z_{G_2}(\bm 0).
\]
\end{enumerate}
\end{theorem}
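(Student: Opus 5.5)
Parts (i) and (iii) follow from the cutvertex splitting lemma. Part (ii) needs a separate Fourier-side computation. I treat them in that order.

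\emph{Parts (i) and (iii).} In the series composition $G=G_1\circ G_2$, the identified vertex $t_1=s_2$ is a cutvertex joining $G_1$ and $G_2$, and $s_1\in V_1$, $t_2\in V_2$. The cutvertex splitting lemma, with momentum routed $s_1\to t_1\to t_2$, then gives $\mathcal Z_G(\bm k)=\mathcal Z_{G_1}(\bm k)\mathcal Z_{G_2}(\bm k)$ directly. In (iii), the identified vertex $v=w$ is a cutvertex, and both terminals lie in $V_1$. The second case of the same lemma then yields $\mathcal Z_{G_1}(\bm k)\mathcal Z_{G_2}(\bm 0)$. By \cref{lem:choice_of_x}~(3), the factor $\mathcal Z_{G_2}(\bm 0)$ does not depend on the forgotten terminals of $G_2$. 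The degenerate cases, where $G_1$ or $G_2$ is a single vertex or has $s_i=t_i$, are covered by the same argument with $\bm k$-independent factors.

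\emph{Part (ii).} Pin $p=s$ with $\bm x_s=\bm 0$, using \cref{lem:choice_of_x}~(2). Split the summation into the common terminal position $\bm y=\bm x_t\in\Lambda$ and the interior nodes $V_1\setminus\{s_1,t_1\}$ and $V_2\setminus\{s_2,t_2\}$, which are disjoint. Define the partial sums
\[
F_i(\bm y)=\sum_{\{\bm x_v\}_{v\in V_i\setminus\{s_i,t_i\}}}\prod_{e\in E_i}K_e(\delta\bm x_e)\Big|_{\bm x_{s_i}=\bm 0,\ \bm x_{t_i}=\bm y}.
\]
By Fubini, justified by the absolute convergence in \cref{lem:choice_of_x}~(1), we get
\[
\mathcal Z_G(\bm k)=\sum_{\bm y\in\Lambda}e^{-2\pi i\bm y\cdot\bm k}F_1(\bm y)F_2(\bm y),
\qquad
\mathcal Z_{G_i}(\bm k)=\sum_{\bm y\in\Lambda}e^{-2\pi i \bm y\cdot\bm k}F_i(\bm y)=\hat F_i(\bm k).
\]
Tonelli and the spanning-tree bound of \cref{lem:choice_of_x}~(1), applied to $G_i$, show that $F_i\in\ell^1(\Lambda)$. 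Hence $\mathcal Z_{G_i}=\mathcal F_\Lambda F_i$ and $\mathcal Z_G=\mathcal F_\Lambda(F_1F_2)$. \Cref{cor:convolution} then gives $\mathcal Z_G=\hat F_1\ast\hat F_2=\mathcal Z_{G_1}\ast\mathcal Z_{G_2}$.

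\emph{Main obstacle.} The step needing the most care is exactly this identification $\mathcal Z_{G_i}=\mathcal F_\Lambda F_i$ with $F_i\in\ell^1$. It requires that pinning at the source and placing the sink at $\bm y$ reproduce the graph lattice sum exactly, with the phase $e^{-2\pi i(\bm x_t-\bm x_s)\cdot\bm k}$ becoming $e^{-2\pi i\bm y\cdot\bm k}$. It also requires that the sum over all non-pinned nodes can be reorganised as an outer sum over $\bm y$. If $s_i=t_i$, the same computation goes through with $F_i$ supported at $\bm y=\bm 0$. Only absolute summability is needed to permute the sums, and that is already supplied by \cref{lem:choice_of_x}.
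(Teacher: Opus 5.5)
Your proof is correct and follows the paper's argument. For (i) and (iii), the paper pins the identified vertex at the origin and factorizes the sum by hand, which is exactly the cutvertex-splitting computation you cite. For (ii), it exchanges the convolution with the lattice sums and uses $V_\Lambda\int_{\mathrm{BZ}}e^{-2\pi i\bm p\cdot(\bm x_{t_2}-\bm x_{t_1})}\,\mathrm d\bm p=\delta_{\bm x_{t_1},\bm x_{t_2}}$ to identify the sinks. Your packaging of (ii) through the terminal-to-terminal kernels $F_i\in\ell^1(\Lambda)$ and \cref{cor:convolution} is the same orthogonality argument, organized a bit more cleanly, with the Fubini justification made explicit.
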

\begin{proof}
    (i) We start from the defining lattice sums in Definition~\ref{def:graph_definitions}. 
By Lemma~\ref{lem:choice_of_x}, we may fix an arbitrary vertex position in each graph without changing the value of the sum. 
For $G_1$ we fix $t_1$ and for $G_2$ we fix $s_2$, setting
$\bm x_{t_1}=\bm 0$ and $\bm x_{s_2}=\bm 0$.
This identifies the vertices $t_1$ and $s_2$, thus the defining step of the series composition.

Forming the product of the two graph lattice sums and using absolute summability, the lattice sums may be combined. 
The exponential factors then satisfy
\[
e^{-2\pi i(\bm x_{t_1}-\bm x_{s_1})\cdot\bm k}
\,e^{-2\pi i(\bm x_{t_2}-\bm x_{s_2})\cdot\bm k}
\underset{t_1=s_2}{=}
e^{-2\pi i(\bm x_{t_2}-\bm x_{s_1})\cdot\bm k},
\]
so the terminals of the resulting graph are $(s_1,t_2)$.

Moreover, by construction of the series composition, each edge of $G$
belongs either to $E_1$ or to $E_2$ and connects only vertices of the corresponding graph.
After fixing the identified vertex $t_1=s_2$ to the origin,
the product of edge weights factorizes as
\[
\prod_{e\in E_1}
   {K}_{e}\big(\delta \bm x_{e}\big)
\prod_{e\in E_2}
   {K}_{e}\big(\delta\bm x_{e}\big)=
\prod_{e\in E(G)}
   {K}_{e}\big(\delta\bm x_{e}\big).
\]
Hence, the combined lattice sum coincides with the graph lattice sum of the series composition $G$.
    
    (ii) Using Lemma~\ref{lem:choice_of_x}, we omit the vertices $s_1$ and $s_2$ from summation and can thus set them equal with  $\bm x_{s_1}=\bm x_{s_2}=\bm 0$. When performing the convolution, notice that the only factors that carry a $\bm k$ dependence are the exponentials. After exchanging the convolution with the sum, which is possible by Fubini due to absolute convergence, we have
    \[
    e^{-2\pi i \bm x_{t_1}\cdot (\bm \cdot)} \ast e^{-2\pi i \bm x_{t_2}\cdot (\bm \cdot)} = e^{-2\pi i \bm x_{t_1}\cdot (\bm \cdot)}\delta_{\bm x_{t_1},\bm x_{t_2}},
    \]
    which sets the vertices $t_1$ and $t_2$ equal while achieving the correct form of the exponential for the graph $G$. Thus, the operation sets $s_1=s_2$ and $t_1=t_2$, which is equivalent to parallel composition.

    (iii) As before, use Lemma~\ref{lem:choice_of_x} to remove the vertex $v$ from $G_1$ and $w$ from $G_2$ from summation, setting $\bm x_{v}=\bm x_{w}=\bm 0$. We have thus identified $v$ by $w$, which corresponds to the 1-sum. Set $\bm k=0$ in $\mathcal Z_{G_2}$, thereby forgetting the terminals. The product of the graph lattice sums then directly yields the graph lattice sum for $G$, with the terminals identified as those of $G_1$. 
\end{proof}

The following example illustrates, using the composition rules from \cref{thm:zeta_vs_graph_ops_weighted}, how more densely connected graph lattice sums can be constructred using iterated multiplications and convolutions of bridges. Note, in particular, how the reordering of operations changes the arrangement of the terminals.

\begin{example}
    For bridges $G_1,\dots,G_5$ with even kernels $K_1,\dots,K_5$, the operation
    \[
    \mathcal Z_{G} =  \Big((\mathcal Z_{G_1}\mathcal Z_{G_2})\ast (\mathcal Z_{G_3}\mathcal Z_{G_4})\Big)\ast \mathcal Z_{G_5}
     \]
     yields the circle graph with $4$ nodes and an additional connecting line with kernel $K_5$ with terminals colored in orange as shown below in (a). 
Altering the order of convolutions can be used to map the terminals to different nodes. For instance,
 \[
    \mathcal Z_{G} =  \Big((\mathcal Z_{G_1}\mathcal Z_{G_2})\ast \mathcal Z_{G_5}\Big) \mathcal Z_{G_3}\Big)\ast \mathcal Z_{G_4},
\]
leads to the same graph but with $\bm k$ associated with the $K_4$ edge, see (b).

     \begin{center}
     \begin{tikzpicture}
  \tikzset{
    mynode/.style={circle, draw, minimum size=5pt, inner sep=0pt},
    terminal/.style={mynode, fill=orange},
  }

  \node[terminal] (a) at (0:1.2cm) {};
  \node[mynode]   (b) at (90:1.2cm) {};
  \node[terminal] (c) at (180:1.2cm) {};
  \node[mynode]   (d) at (270:1.2cm) {};

 \draw (a) -- node[above right]  {$K_2$} (b);
  \draw (b) -- node[above left]   {$K_1$} (c);
  \draw (c) -- node[below left]   {$K_3$} (d);
  \draw (d) -- node[below right]  {$K_4$} (a);
  \draw (a) -- node[above]        {$K_5$} (c);

  \node[anchor=north west, xshift=-20pt, yshift=2pt] at (current bounding box.north west) {(a)};
\end{tikzpicture}
\qquad
    \begin{tikzpicture}
  \tikzset{
    mynode/.style={circle, draw, minimum size=5pt, inner sep=0pt},
    terminal/.style={mynode, fill=orange},
  }

  \node[terminal] (a) at (0:1.2cm) {};
  \node[mynode]   (b) at (90:1.2cm) {};
  \node[mynode]   (c) at (180:1.2cm) {};
  \node[terminal] (d) at (270:1.2cm) {};

 \draw (a) -- node[above right]  {$K_2$} (b);
  \draw (b) -- node[above left]   {$K_1$} (c);
  \draw (c) -- node[below left]   {$K_3$} (d);
  \draw (d) -- node[below right]  {$K_4$} (a);
  \draw (a) -- node[above]        {$K_5$} (c);

    \node[anchor=north west, xshift=-20pt, yshift=2pt] at (current bounding box.north west) {(b)};

\end{tikzpicture}
\end{center}
The graph (a) occurs in the 1qp corpus of the LRTFIM at order $r=5$ with prefactor $-71/8$, see \cref{fig:tfim_o7} (b).
\end{example}

We will now classify the set of all graph lattice sums that can be generated from the operations in \cref{def:graph_definitions}. The correct graph-theoretic framework is given by the two-terminal series-parallel graphs, introduced by Duffin \cite{Duffin1965}, where we follow the notation from \cite[Sec.~8.3]{Bodlaender1998}.

\begin{definition}[Series-parallel graphs]
    \label{def:sp_graphs}
    A two-terminal multi-graph $(V,E,s,t)$ is called series-parallel if it can be formed by the following rules:
    \begin{enumerate}
        \item A graph with only two distinct nodes $s,t$ and an edge $(s,t)$ is series-parallel.
        \item Let $G_j=(V_j,E_j,s_j,t_j)$, $j\in\{1,2\}$ be two series-parallel graphs. Then, after taking the disjoint union of $G_1$ and $G_2$,
        \begin{enumerate}
            \item the series composition $G=G_1\circ G_2$ is obtained by identifying $s_2=t_1$ and setting $(s_1,t_2)$ as the new terminals,
            \item the parallel composition $G=G_1 \parallel G_2$ is obtained by identifying $s_1=s_2$ and $t_1=t_2$. 
        \end{enumerate}
        \item A graph without terminals, or equivalently with equal terminals $s=t$, is called series-parallel if it can be equipped with two distinct terminals such that the resulting two-terminal graph is series-parallel.
    \end{enumerate}
\end{definition}

The following theorem then identifies the set of graph lattice sums that can be generated from \cref{def:graph_definitions}. They are exactly those of the series-parallel two-terminal multigraphs. After block-factorization from \cref{thm:block_factorization}, this extends to any graph whose spine blocks are series-parallel with respect to their routed terminals and whose attachment blocks are series-parallel for some choice of terminals.

\begin{theorem}[Construction of series-parallel graph lattice sums]
\label{thm:tw2_zeta}
Let $G=(V,E,K,s,t)$ be a series-parallel two-terminal multigraph with kernels
$K_e\in\ell^1(\Lambda)$ for all $e\in E$. Its graph lattice sum
can be generated, starting from the graph lattice sums of its $\vert E\vert$
edges, viewed as bridges with terminals their endpoints, using exactly
\begin{enumerate}
    \item $\vert V\vert-2$ pairwise multiplications,
    \item $\vert E\vert-\vert V\vert+1$ convolutions,
\end{enumerate}
leading to a total of $\vert E\vert-1$ operations. If $s=t$, one additional zero-momentum evaluation is required.
\end{theorem}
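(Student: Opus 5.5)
We argue by structural induction along the series-parallel decomposition tree of $G$ from \cref{def:sp_graphs}, using \cref{thm:zeta_vs_graph_ops_weighted} to translate each composition step into an operation on graph lattice sums. The leaves of the decomposition tree are single edges $e=(e^-,e^+)$. Their graph lattice sums are, by \cref{lem:bridge}, the lattice Fourier transforms $\hat K_e$, possibly reflected according to orientation, with terminals $(e^-,e^+)$. An internal node of type series composition $G_1\circ G_2$ corresponds by \cref{thm:zeta_vs_graph_ops_weighted}(i) to one pointwise multiplication. An internal node of type parallel composition $G_1\parallel G_2$ corresponds by \cref{thm:zeta_vs_graph_ops_weighted}(ii) to one convolution. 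The hypotheses of that theorem hold inductively: subgraphs are connected, kernels are summable, and vertex sets can be made disjoint by relabeling. Hence the graph lattice sum of $G$ is generated by exactly one operation per internal node of the decomposition tree.

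For the operation count we track two invariants along the induction. Let $m(H)$ and $c(H)$ be the numbers of multiplications and convolutions used for a subgraph $H$. We claim
\[
m(H)=|V(H)|-2,\qquad c(H)=|E(H)|-|V(H)|+1 .
\]
For a single edge, $|V|=2$ and $|E|=1$, so both counts are zero, as required.

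For a series composition, the vertex count satisfies $|V|=|V_1|+|V_2|-1$, since one vertex is identified, and the edge count satisfies $|E|=|E_1|+|E_2|$. The multiplications then add up to $(|V_1|-2)+(|V_2|-2)+1=|V|-2$. The convolution count is additive and equals $|E_1|+|E_2|-|V_1|-|V_2|+2=|E|-|V|+1$.

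For a parallel composition, $|V|=|V_1|+|V_2|-2$ and $|E|=|E_1|+|E_2|$. The multiplications give $(|V_1|-2)+(|V_2|-2)=|V|-2$. The convolutions give $(|E_1|-|V_1|+1)+(|E_2|-|V_2|+1)+1=|E|-|V|+1$.

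Summing the two invariants yields a total of $|E|-1$ operations. This also agrees with the fact that a binary tree with $|E|$ leaves has exactly $|E|-1$ internal nodes.

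For $s=t$, part (3) of \cref{def:sp_graphs} supplies distinct terminals $s',t'$ that make $G$ two-terminal series-parallel. We compute $\mathcal Z_{(V,E,K,s',t')}$ as above and then apply one zero-momentum evaluation. By \cref{lem:choice_of_x}(3), the value at $\bm k=\bm 0$ does not depend on the choice of terminals, so it equals $\mathcal Z_G$.

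The step requiring the most care is the bookkeeping for orientations and for the vertex identifications in the convolution step. The proof of \cref{thm:zeta_vs_graph_ops_weighted}(ii) pins $s_1=s_2$ and obtains a Kronecker delta identifying $t_1$ and $t_2$. We must check that this remains valid when the subgraphs are themselves composites with arbitrary internal orientations. This follows because all phase factors depend only on terminal differences, and Fubini applies by \cref{lem:choice_of_x}(1).

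We also need to confirm that the convolution of two bounded periodic functions is well defined, which holds since each graph lattice sum is bounded by \cref{lem:choice_of_x}(1). Finally, we must check that the relabeling in each composition step produces exactly the multigraph $G$ and not merely an isomorphic copy; this holds because both the vertex counts and the graph lattice sums are invariant under relabeling.
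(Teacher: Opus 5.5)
Your proof is correct and follows the paper's argument. The paper also evaluates the series-parallel decomposition tree bottom-up, realizing series compositions as multiplications and parallel compositions as convolutions via \cref{thm:zeta_vs_graph_ops_weighted}, and closes the case $s=t$ with a zero-momentum evaluation justified by \cref{lem:choice_of_x}(3). The only difference is cosmetic: you propagate the invariants $m(H)=|V(H)|-2$ and $c(H)=|E(H)|-|V(H)|+1$ inductively, whereas the paper counts globally via $|V|=2|E|-n_1-2n_2$ together with $n_1+n_2=|E|-1$, which is the same identification bookkeeping in aggregated form.
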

\begin{proof}
    Assume $s\neq t$. From \cref{def:sp_graphs}, we know that the graph $G$ is obtainable from pairwise series and parallel compositions of its edges, viewed as bridges with endpoint terminals. We may thus record the resulting decomposition as a binary tree whose $\vert E\vert$ leaves are the basic bridges, one for each $e\in E$, and whose inner nodes define the compositions (series or parallel). Each inner node is therefore associated with a two-terminal multigraph assembled from the edges of its leaves, and the root yields $G$ itself.
    
    We construct the graph lattice sum from the leaves upwards, where the graph lattice sum for each leaf is the lattice Fourier transform of the associated kernel $\hat K_e$, where the edge is oriented as required by the composition and $K_e$ is replaced by $K_e(-\,\cdot\,)$ on a reversed edge. Series composition of graphs is then identified with multiplication and parallel composition of graphs by convolution of graph lattice sums as shown in \cref{thm:zeta_vs_graph_ops_weighted}. Summability of $K_e$ results in continuity and boundedness of $\hat K_e(\bm k)$ on $\mathrm{BZ}$, implying integrability. As these properties are preserved under multiplication and convolution, all graph lattice sums occurring as sub-products are continuous and integrable, hence the convolutions are well-defined. We can therefore iterate the operations up to the root, with value $\mathcal Z_G$. The case of equal terminals corresponds to zero momentum evaluation. It is obtained by choosing an arbitrary pair of terminals that admits a series-parallel decomposition, which exists by \cref{def:sp_graphs} and evaluating at $\bm k=\bm 0$ as the final operation.

Regarding the counts, first notice that both compositions combine two graphs into a single one whose edge set is the disjoint union of the two edge sets. Starting from the $\vert E\vert$ single edges of $G$ and ending with $G$ itself, each composition reduces the number of objects by one, so that the total number of pairwise operations equals $\vert E\vert-1$. Let $n_{1}$ be the number of multiplications and $n_2$ the number of convolutions. Starting from a total of $2|E|$ nodes of the leaves, each multiplication identifies one pair of nodes while each convolution identifies two pairs of nodes. Hence
\[
\vert V\vert =2\vert E\vert -n_1 -2n_2=2\vert E\vert -2(n_1 +n_2)+n_1,
\]
and with $n_1+n_2=|E|-1$, we find $n_1=\vert V\vert -2$ and $n_2=\vert E\vert-1-n_1=\vert E\vert -\vert V\vert +1$. 
\end{proof}

The construction underlying the proof of the above theorem directly provides the foundation for an algorithm. Starting from the full graph, we repeatedly merge parallel edges into single edges, corresponding to convolution, and eliminate non-terminal nodes with 2 neighbors, corresponding to pointwise multiplication, until only the terminals with a single edge remain. Of the two operations, the convolutions dominate the numerical cost, whose efficient evaluation is discussed in the next section, while the combinatorial layer is again negligible. The proof shows that the reduction process never stalls, when the conditions of the theorem are fulfilled. Additionally, failure of the reduction for general blocks directly shows that block requires a different evaluator, which is discussed in \cref{sec:tensor}.

We finally want to point out that the class of series-parallel graphs can be brought into connection with a well-known classification concept for graphs called treewidth, originally introduced by Halin \cite{halin1976s} and independently rediscovered by Robertson and
Seymour \cite{RobertsonSeymour1986}. For a review, see \cite{CyganFKLMPPS15}. In the following, we follow the notation from \cite{Bodlaender1998}. 

\begin{definition}[Tree decomposition and treewidth]
    \label{def:tree_decomp}
Let $G=(V,E)$ be a finite simple graph. A tree decomposition of $G$ is a pair $\big(\{X_i\}_{i \in I}, T=(I,F)\big)$ with $\{X_i\}_{i\in I}$ a family of subsets of $V$, referred to as bags, one for each node of $T$, and $T$ a tree with edges $F$, such that the following conditions hold:
\begin{enumerate}
    \item Each node of $V$ is included in at least one of the bags: $\bigcup_{i\in I}X_i=V$.
    \item For all edges $e\in E$, there exists an $i\in I$, such that $\{e^+, e^-\}\subseteq X_i$.
    \item For all $i,j,k\in I$, if $j$ lies on a path between $i$ and $k$, then $X_i\cap X_k\subseteq X_j$.
\end{enumerate}
The width of a tree decomposition is $\max_{i\in I}(|X_i|-1)$ and the treewidth $\mathrm{tw}(G)$ of a graph is the minimum width of all possible tree decompositions. We identify the treewidth of a multi graph with the treewidth of the underlying simple graph, obtained by joining all parallel edges while disregarding edge orientations.
\end{definition}

The following corollary shows that a two-terminal block being series-parallel is equivalent to the block, augmented by an edge formed by the terminals, having $\mathrm{tw}\le 2$. 
\begin{corollary}[Equivalence of series-parallel and $\mathrm{tw}\le 2$  classification]
\label{cor:sp_tw_equivalence}
Let $G=(V,E,s,t)$ be a block. For $s\neq t$, define the augmented graph $\tilde G=G+(s,t)$, and $\tilde G=G$ otherwise. Then $(G,s,t)$ is series-parallel if and only if $\mathrm{tw}(\tilde G)\le2$ where for $s=t$ this is understood for some choice of terminals.
\end{corollary}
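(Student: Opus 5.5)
The plan is to reduce the corollary to the classical characterisation of treewidth at most two by excluded $K_4$-minors, together with Duffin's theorem relating two-terminal series-parallel graphs to $K_4$-minor-free graphs; see \cite{Duffin1965} and \cite[Sec.~8.3]{Bodlaender1998}. Throughout, we pass to the underlying simple graph, since by \cref{def:tree_decomp} the treewidth ignores edge multiplicities and orientations. Parallel edges are also harmless for the series-parallel property, because they can always be generated by parallel compositions of single-edge bridges. We treat $s\neq t$ first and then reduce $s=t$ to it.

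For the direction ``series-parallel $\Rightarrow \mathrm{tw}(\tilde G)\le 2$'', we induct along the binary composition tree from the proof of \cref{thm:tw2_zeta}. We strengthen the hypothesis to the following claim: every two-terminal series-parallel graph $H$ with terminals $(s,t)$ has a tree decomposition of width at most $2$ containing a bag that includes $\{s,t\}$. This is equivalent to a width-$2$ decomposition of $H+(s,t)$. For a single edge, take one bag $\{s,t\}$.

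For a series composition $H_1\circ H_2$ with identified node $m=t_1=s_2$, take the decompositions of the two parts with root bags $\supseteq\{s_1,m\}$ and $\supseteq\{m,t_2\}$. Add a new bag $\{s_1,m,t_2\}$ adjacent to both root bags. Conditions (1)--(3) of \cref{def:tree_decomp} are then checked directly; the only shared vertex of the two parts is $m$, and it lies in the connecting bag. For a parallel composition, join the two root bags through a new bag $\{s,t\}$. The running-intersection property holds because the two parts share only $s$ and $t$.

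For the converse, assume $\mathrm{tw}(\tilde G)\le 2$. Then $\tilde G$ has no $K_4$ minor, since $\mathrm{tw}(K_4)=3$ and treewidth is minor-monotone. Since $G$ is a block and $\tilde G\supseteq G$, the graph $\tilde G$ is $2$-connected (or a single edge). We then use the standard fact that a $2$-connected $K_4$-minor-free simple graph with at least three vertices contains a vertex of degree $2$. Suppressing a degree-$2$ vertex other than $s$ and $t$ (an inverse series step), or merging parallel edges (an inverse parallel step), keeps the graph $2$-connected, $K_4$-minor-free, and containing the edge $st$. Induction on $|E|$ then reduces $\tilde G$ to the single edge $st$, possibly with multiplicities. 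Reading the reductions backwards, with the added edge $(s,t)$ removed as the outermost parallel component, exhibits $(G,s,t)$ as series-parallel.

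The main obstacle is ensuring that a reducible degree-$2$ vertex can always be found away from the terminals. We will handle this with the stronger classical lemma that a $2$-connected $K_4$-minor-free graph that is not a cycle has at least two nonadjacent degree-$2$ vertices. Since $s$ and $t$ are adjacent in $\tilde G$, at most one of these two vertices can be a terminal, so at least one is usable. If instead $\tilde G$ is a cycle through the edge $st$, then $G$ is a path from $s$ to $t$, which is a series composition.

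Finally, when $s=t$, both sides of the equivalence are existential over the choice of terminals. If some $(G,s',t')$ is series-parallel, then $\mathrm{tw}(G)\le\mathrm{tw}(G+(s',t'))\le 2$. Conversely, if $\mathrm{tw}(G)\le 2$, choose $s',t'$ as the endpoints of any edge of $G$. Then $G+(s',t')$ only adds a parallel edge, so its treewidth is unchanged, and the $s\neq t$ case applies.
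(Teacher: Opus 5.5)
Your proof is correct, but it takes a different route from the paper. The paper's proof consists of two citations. It first observes that $\tilde G$ is again a block. It then invokes \cite[Thm.~42]{Bodlaender1998}: treewidth at most $2$ holds if and only if every biconnected component is series-parallel for some choice of terminals. This settles $s=t$ at once. For $s\neq t$, it uses \cite[Cor.~3.4]{RoyleSokal2015} to pass from $\tilde G$ being series-parallel to $(G,s,t)$ being series-parallel with the prescribed terminals.

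You instead prove both directions from more primitive facts. The forward direction is an explicit width-$2$ tree decomposition built along the composition tree, carrying a bag that contains the terminals. The converse uses minor-monotonicity of treewidth and the classical lemma on nonadjacent degree-$2$ vertices in $2$-connected $K_4$-minor-free graphs. These drive a terminal-protecting reduction of $\tilde G$ down to the edge $st$, after which you peel off the added edge as the outermost parallel factor. In effect, you reprove the Royle--Sokal corollary rather than citing it.

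Your version is constructive and tied to the rest of the paper. The invariants you maintain ($2$-connected, $K_4$-minor-free, containing an $st$ edge) survive every parallel merge and every suppression of a non-terminal degree-$2$ vertex. Hence the greedy reduction described after \cref{thm:tw2_zeta} never stalls on a block with $\mathrm{tw}(\tilde G)\le 2$, in whatever order the reductions are applied. The paper states this only informally. The paper's version, by contrast, is shorter and delegates all graph-theoretic content to the literature.

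One small presentational point: you treat the cycle case only for the initial $\tilde G$, but intermediate reduced graphs can also be cycles. This is harmless, since in a cycle every vertex has degree $2$ and at least one vertex avoids $\{s,t\}$. The nonadjacent degree-$2$ lemma in fact holds for all $2$-connected $K_4$-minor-free simple graphs on at least four vertices, cycles included, so only $K_3$ needs separate mention.
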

\begin{proof}
The graph $\tilde G$ is again a block, as a cutvertex $v$ of $\tilde G$ would disconnect $\tilde G-v$ and therefore also its spanning subgraph $G-v$. Its only biconnected component being $\tilde G$ itself, $\mathrm{tw}(\tilde G)\le2$ holds if and only if $\tilde G$ is series-parallel, by~\cite[Thm.~42]{Bodlaender1998}. In case that $s=t$, this is the assertion. For $s\neq t$, the latter is equivalent to $(G,s,t)$ being series-parallel by~\cite[Cor.~3.4]{RoyleSokal2015}.
\end{proof}

\begin{remark}
Whether a block is series-parallel depends on the position of its terminals,
and does not follow from $\mathrm{tw}(G)\le 2$ alone. The graph (a), shown below, with the
terminals marked in orange is
series-parallel, its graph lattice sum being constructed as indicated below.
The graph in (b) is the same block with the two non-adjacent nodes as
terminals. Its augmented graph is the complete graph $K_4$ shown in (c), so
that $\mathrm{tw}(\tilde G)=3$ and its graph lattice sum cannot be constructed
from the operations in \cref{def:graph_definitions}. The graph in (c) is
excluded for every choice of terminals, as already $\mathrm{tw}(G)=3$. Blocks
of this type are evaluated by the tensor-network method of \cref{sec:tensor}.
\end{remark}

\begin{center}
\makebox[\textwidth][c]{%
\begin{tikzpicture}[baseline={(0,0)}]
  \tikzset{
    mynode/.style={circle, draw, minimum size=5pt, inner sep=0pt},
    terminal/.style={mynode, fill=orange},
  }

  \node[terminal] (n0)   at (0:0.95cm) {};
  \node[mynode]   (n90)  at (90:0.95cm) {};
  \node[terminal] (n180) at (180:0.95cm) {};
  \node[mynode]   (n270) at (270:0.95cm) {};

  \draw (n0)   -- node[above right] {\small$K_2$} (n90);
  \draw (n90)  -- node[above left]  {\small$K_1$} (n180);
  \draw (n180) -- node[below left]  {\small$K_3$} (n270);
  \draw (n270) -- node[below right] {\small$K_4$} (n0);
  \draw (n0)   -- node[above]       {\small$K_5$} (n180);

  \node[anchor=north] at (0,-1.5)
    {\small$\mathcal Z_{G} = \Big((\mathcal Z_{G_1}\mathcal Z_{G_2})\ast
      (\mathcal Z_{G_3}\mathcal Z_{G_4})\Big)\ast \mathcal Z_{G_5}$};
  \node[anchor=north west] at (current bounding box.west |- 0,1.45) {(a)};
\end{tikzpicture}\hfill
\begin{tikzpicture}[baseline={(0,0)}]
  \tikzset{
    mynode/.style={circle, draw, minimum size=5pt, inner sep=0pt},
    terminal/.style={mynode, fill=orange},
  }

  \node[mynode]   (n0)   at (0:0.95cm) {};
  \node[terminal] (n90)  at (90:0.95cm) {};
  \node[mynode]   (n180) at (180:0.95cm) {};
  \node[terminal] (n270) at (270:0.95cm) {};

  \draw (n0)   -- (n90);
  \draw (n90)  -- (n180);
  \draw (n180) -- (n270);
  \draw (n270) -- (n0);
  \draw (n0)   -- (n180);

  \node[anchor=north] at (0,-1.5) {\small$\mathrm{tw}(\tilde G)=3$};
  \node[anchor=north west] at (current bounding box.west |- 0,1.45) {(b)};
\end{tikzpicture}\hfill
\begin{tikzpicture}[baseline={(0,0)}]
  \tikzset{mynode/.style={circle, draw, minimum size=5pt, inner sep=0pt}}

  \node[mynode] (n0)   at (0:0.95cm) {};
  \node[mynode] (n90)  at (90:0.95cm) {};
  \node[mynode] (n180) at (180:0.95cm) {};
  \node[mynode] (n270) at (270:0.95cm) {};

  \draw (n0)   -- (n90);
  \draw (n90)  -- (n180);
  \draw (n180) -- (n270);
  \draw (n270) -- (n0);
  \draw (n0)   -- (n180);
  \draw (n90)  -- (n270);

  \node[anchor=north] at (0,-1.5) {\small$\mathrm{tw}(G)=3$};
  \node[anchor=north west] at (current bounding box.west |- 0,1.45) {(c)};
\end{tikzpicture}}
\end{center}

In this section, we have established how general series-parallel blocks can be constructed based on multiplications and convolutions of kernel lattice Fourier transforms. The computation of the arising convolutions is however nontrivial, when the kernel function includes algebraic tails, as these lead to singularities at $\bm k=\bm 0$ for the lattice Fourier transform that need to be correctly captured across repeated convolutions and multiplications. An adaptive convolution strategy can succeed for a low number of iterated convolution operation, yet retaining full precision is challenging. In the following, we solve this issue by reformulating the two operation in an efficiently computable semi-analytic algebra that permits analytical evaluation of the convolutions at linear cost in the number of operations and log-linear cost in the number of discretization parameters.

\section{Semi-analytic algebra for graph products and convolutions}
\label{sec:algebra}

In the previous section, we have seen that from repeated convolutions and products of bridges general graph lattice sums for series-parallel blocks can be constructed. From \cref{thm:tw2_zeta}, we know that each series-parallel block can be created from precisely $\vert E\vert-1$ operations, starting from simple bridges. 
While the combinatorial procedure on how to combine the basic objects has been settled in the last section, the stable and efficient numerical evaluation of the repeated convolutions remains challenging.  
While simple Fourier-based approaches allow for stable evaluations of repeated operations in the case of compactly supported or rapidly decaying kernels $K_e$, where the Fourier series can be truncated, it is a nontrivial task to account for the power-law tails in the kernel of graph zeta functions in \cref{def:graph_zeta_functions}. 
These correspond to non-analyticities of the graph zeta function at $\bm k=\bm 0$, the simplest example being the $|\bm k|^{\nu-d}$ singularity of the Epstein zeta function $Z_{\Lambda,\nu}(\bm k)$, see \cref{lem:holomorphy}\,(2). Adaptive integration methods can be applied, yet scale in general quadratically, as FFT is no longer available. Also, avoiding loss of precision after multiple convolution operations is challenging. In this section, we demonstrate a method that allows for a numerically efficient and precise treatment of convolutions and multiplications for general kernels with power-law tails under repeated multiplications and convolutions, even for kernels that include tails with exponents close to the lattice dimension $d$. This is achieved by casting the leading non-analyticities of graph zeta functions in terms of Epstein zeta functions. The remaining regular part is written in terms of a rapidly decaying Fourier series. The resulting semi-analytical structure closes under convolution and multiplication, forming an algebra that  allows for a precise and fast evaluation at linear scaling in the number of operations and log-linear scaling in the number of discretization points. 

We begin by introducing a semi-analytic structure for kernels and graph lattice sums. Note that a slightly more general kernel compared to the one used for graph zeta functions in \cref{def:graph_zeta_functions} is necessary.
\begin{definition}[Semi-analytic kernels and graph lattice sums]
\label{def:semianalytic}
We call a kernel $K\in\ell^1(\Lambda)$ semi-analytic if it admits a
representation
\[
K(\bm x)=a(\bm x)+\sum_{j=1}^{N}b_j\,\mathcal K_{\nu_j}(\bm x),
\qquad N\in\mathds N_0,
\]
with $b_j,\nu_j\in\mathds C$, $\mathrm{Re}(\nu_j)>d$, and even coefficients $a(\bm x)\in\mathds C$ obeying the bound $|a(\bm x)|\le C|\bm x|^{-(d+2)}$ for all $\bm x\in\Lambda\setminus\{\bm 0\}$ and some $C>0$. We note that this representation is not unique and operations within this section will act on a given representation. Finally, a graph lattice sum $\mathcal Z_G$ is called semi-analytic, if it is the lattice Fourier transform of a semi-analytic kernel, namely
\[
\mathcal Z_G(\bm k)=\sum_{\bm x\in\Lambda}a(\bm x)e^{-2\pi i\bm x\cdot\bm k}
+\sum_{j=1}^{N}b_j\,Z_{\Lambda,\nu_j}(\bm k),
\]
with $Z_{\Lambda,\nu}$ the Epstein zeta function.
\end{definition}
Numerically, we can store the function as the triple $(a,\bm b,\bm\nu)$, with $\bm b,\bm\nu\in\mathds C^N$, where $a$ is truncated to the balanced cell
\[
\Lambda_n=A\,\{-\lceil n/2\rceil+1,\ \dots,\ \lfloor n/2\rfloor\}^d,
\]
which includes one representative of each site of the torus $\Lambda/(n\Lambda)$ from \cref{def:quantum-model} with $n^d$ points in total. The Fourier series of $a$ can then be evaluated on the momentum grid $\mathrm{BZ}_n=(\Lambda^\ast/n)/\Lambda^\ast$ through FFT.

This structure, of course, includes all simple regularized power-laws combined with compactly supported or rapidly-decaying kernels. In particular, it includes the kernels of graph zeta functions. Provided a graph lattice sum in the structure of \cref{def:semianalytic}, the resulting function can be efficiently evaluated using EpsteinLib for the power-law tails and FFT after truncating the rapidly decaying Fourier series, which is meaningful due to the decay condition on $a(\bm x)$.

We first notice that the semi-analytic structure allows to convolve two semi-analytic graph lattice sums, while retaining the exact singularities in $\bm k$. This rest on the basic, yet powerful, property that convolutions of Epstein zeta functions are Epstein zeta functions again with a shifted exponent. Meanwhile, convolutions of Epstein zeta functions with rapidly decaying Fourier series yield Fourier series with faster decay, while convolutions of the two short-ranged parts remain short-ranged.

\begin{theorem}[Convolution of semi-analytic graph lattice sums]
\label{thm:convolution}
Let $\mathcal Z_{G_1},\mathcal Z_{G_2}$ be semi-analytic as in \cref{def:semianalytic}, with representations $(a_\eta,\bm b_\eta,\bm\nu_\eta)$ and $\bm b_\eta,\bm\nu_\eta\in\mathds C^{N_\eta}$, $\eta\in\{1,2\}$, with $\mathrm{Re}(\nu_{i\eta})>d$ for all $i,\eta$. Let in addition $|a_\eta(\bm x)|\le C_\eta|\bm x|^{-\gamma_\eta}$ on $\Lambda\setminus\{\bm 0\}$ with $\gamma_\eta>d$ for $C_\eta>0$. Then $\mathcal Z_{G_1}\ast\mathcal Z_{G_2}$ is semi-analytic, with
\[
(\mathcal Z_{G_1}\ast\mathcal Z_{G_2})(\bm k)
=\sum_{\bm x\in\Lambda}a_3(\bm x)\,e^{-2\pi i\bm x\cdot\bm k}+\sum_{i=1}^{N_1}\sum_{j=1}^{N_2}b_{i1}b_{j2}\,
Z_{\Lambda,\nu_{i1}+\nu_{j2}}(\bm k),
\]
with Fourier coefficients
\[
a_3(\bm x)=a_1(\bm x)a_2(\bm x)
+a_1(\bm x)\sum_{j=1}^{N_2}b_{j2}\,\mathcal K_{\nu_{j2}}(\bm x)
+a_2(\bm x)\sum_{i=1}^{N_1}b_{i1}\,\mathcal K_{\nu_{i1}}(\bm x) .
\]
The exponents of the Epstein zeta terms then satisfy $\mathrm{Re}(\nu_{i1}+\nu_{j2})>2d>d$. The regular part $a_3$ is even and obeys $|a_3(\bm x)|\le C_3|\bm x|^{-\gamma_3}$ on $\Lambda\setminus\{\bm 0\}$ for some $C_3>0$ with
\[
\gamma_3=\min\big(\gamma_1+\gamma_2,\;\gamma_1+\min_j\mathrm{Re}(\nu_{j2}),\;
\gamma_2+\min_i\mathrm{Re}(\nu_{i1})\big),
\]
with minima over empty index sets taken as $\infty$. In particular, $\gamma_3\ge d+2$ if $\gamma_1,\gamma_2\ge d+2$, so that semi-analyticity from \cref{def:semianalytic} is preserved.
\end{theorem}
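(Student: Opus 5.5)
The plan is to reduce everything to \cref{cor:convolution}, which identifies convolution of lattice Fourier transforms with the lattice Fourier transform of the pointwise product of kernels. Write $K_\eta=a_\eta+\sum_j b_{j\eta}\mathcal K_{\nu_{j\eta}}$ for $\eta\in\{1,2\}$. Both kernels lie in $\ell^1(\Lambda)$: the $a_\eta$ by the decay $\gamma_\eta>d$ together with boundedness at the origin, and the power laws because $\mathrm{Re}(\nu_{j\eta})>d$. Then $\mathcal Z_{G_\eta}=\hat K_\eta$, and \cref{cor:convolution} gives $\mathcal Z_{G_1}\ast\mathcal Z_{G_2}=\mathcal F_\Lambda(K_1K_2)$.

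It remains to expand the product $K_1K_2$ bilinearly into four groups of terms. The power-law times power-law terms combine via $\mathcal K_{\nu}\mathcal K_{\nu'}=\mathcal K_{\nu+\nu'}$, which holds pointwise including at $\bm x=\bm 0$, where both sides vanish. Their Fourier transforms are exactly $b_{i1}b_{j2}Z_{\Lambda,\nu_{i1}+\nu_{j2}}$, with $\mathrm{Re}(\nu_{i1}+\nu_{j2})>2d$. The other three groups are collected into $a_3$. Linearity of $\mathcal F_\Lambda$ is justified because each piece is separately in $\ell^1$: each is a product of an $\ell^1$ function with a bounded one. This yields the stated representation.

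For the properties of $a_3$, evenness follows since $a_\eta$ and $\mathcal K_\nu$ are even. For the decay, bound on $\Lambda\setminus\{\bm 0\}$:
\begin{itemize}
\item $|a_1a_2|\le C_1C_2|\bm x|^{-\gamma_1-\gamma_2}$;
\item $|a_1\mathcal K_{\nu_{j2}}|\le C_1|\bm x|^{-\gamma_1-\mathrm{Re}(\nu_{j2})}$, and symmetrically for $a_2\mathcal K_{\nu_{i1}}$.
\end{itemize}
The only mildly delicate step is combining different exponents into the single exponent $\gamma_3$. Here one uses that $\Lambda\setminus\{\bm 0\}$ has a minimal norm $r_0>0$, so $|\bm x|^{-\alpha}\le r_0^{\beta-\alpha}|\bm x|^{-\beta}$ whenever $\alpha\ge\beta$. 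Hence every term is bounded by a constant times $|\bm x|^{-\gamma_3}$ with $\gamma_3$ the minimum of the exponents. Summing the finitely many constants gives $C_3$, and empty index sets simply drop out.

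Finally, if $\gamma_1,\gamma_2\ge d+2$, then every candidate exponent is at least $d+2$: indeed $\gamma_1+\gamma_2\ge 2d+4$, and $\gamma_1+\mathrm{Re}(\nu)>2d+2\ge d+2$. So the triple $(a_3,\,(b_{i1}b_{j2}),\,(\nu_{i1}+\nu_{j2}))$ satisfies \cref{def:semianalytic}. I expect no real obstacle; the key point is just that the regularization $\mathcal K_\nu(\bm 0)=0$ keeps products of power laws consistent at the origin.
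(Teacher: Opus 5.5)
Your proposal is correct and follows the paper's proof essentially step for step: you identify the convolution with $\mathcal F_\Lambda(K_1K_2)$ via \cref{cor:convolution}, expand bilinearly using $\mathcal K_{\nu}\mathcal K_{\nu'}=\mathcal K_{\nu+\nu'}$, and bound the three remaining products term by term. Your extra remarks make explicit two steps the paper leaves implicit: the $\ell^1$ membership of each piece, which justifies linearity, and the minimal lattice norm $r_0$, which merges the different decay exponents into $\gamma_3$.
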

\begin{proof}
The convolution is the lattice Fourier transform of the pointwise product of the kernels,
$\mathcal Z_{G_1}\ast\mathcal Z_{G_2}=\mathcal F_\Lambda(K_1K_2)$. Inserting
$K_\eta=a_\eta+\sum_i b_{i\eta}\mathcal K_{\nu_{i\eta}}$ and using
$\mathcal K_{\nu_{i1}}\mathcal K_{\nu_{j2}}=\mathcal K_{\nu_{i1}+\nu_{j2}}$ results in
\[
K_1K_2=a_3+\sum_{i=1}^{N_1}\sum_{j=1}^{N_2}b_{i1}b_{j2}\,
\mathcal K_{\nu_{i1}+\nu_{j2}}
\]
yielding $a_3$ as stated, and $\mathcal F_\Lambda\mathcal K_\nu=Z_{\Lambda,\nu}$ by
\cref{def:epstein}. The real part of every new exponent is strictly larger than the real parts of the exponents involved. Hence, the bound on the exponent in \cref{def:semianalytic} is preserved.

For the decay of the Fourier series, bound the first summand of $a_3$ on $\Lambda\setminus\{\bm 0\}$ by
\[
|a_1(\bm x)a_2(\bm x)|\le C_1C_2|\bm x|^{-(\gamma_1+\gamma_2)},
\]
and the second and third as follows
\[
\bigg|a_\eta(\bm x)\sum_{i=1}^{N_{\mu}} b_{i\mu}\mathcal K_{\nu_{i\mu}}(\bm x)\bigg|
\le C_\eta\sum_{i=1}^{N_{\mu}}|b_{i\mu}|\,|\bm x|^{-\gamma_\eta-\mathrm{Re}(\nu_{i\mu})},
\quad \eta\neq\mu.
\]
This yields the stated bound on $\gamma_3$. Evenness of $a_3$ follows from evenness of $a_1$, $a_2$, and $\mathcal K_\nu$.
\end{proof}

While the closure of semi-analyticity under convolution follows directly from the pointwise product of the kernels, the situation under pointwise multiplication of graph lattice sums is more involved.  The following theorem shows that, indeed, semi-analyticity is also preserved under multiplication of graph lattice sums. The central idea is to analytically determine the leading order singularities of the product, casting them again as associated Epstein zeta functions, while transferring the remaining terms with faster decay into the Fourier series. 

\begin{theorem}[Multiplication of semi-analytic graph lattice sums]
\label{thm:multiplication}
Let $\mathcal Z_{G_1},\mathcal Z_{G_2}$ be semi-analytic as in \cref{def:semianalytic}, with representations $(a_\eta,\bm b_\eta,\bm\nu_\eta)$ and $\bm b_\eta,\bm\nu_\eta\in\mathds C^{N_\eta}$, $\eta\in\{1,2\}$, where $\mathrm{Re}(\nu_{i\eta})>d$ and $\nu_{i1},\nu_{j2}\notin d+2\mathds N_0$. Let the regular parts satisfy $|a_\eta(\bm x)|\le C_\eta|\bm x|^{-\gamma_\eta}$ on $\Lambda\setminus\{\bm 0\}$ with $\gamma_\eta>d$. Then $\mathcal Z_{G_1}\mathcal Z_{G_2}$ is semi-analytic, and
\[
\begin{aligned}
\mathcal Z_{G_1}(\bm k)\,\mathcal Z_{G_2}(\bm k)
&=\sum_{\bm x\in\Lambda}a_3(\bm x)\,e^{-2\pi i\bm x\cdot\bm k}+\sum_{i=1}^{N_1}\sum_{j=1}^{N_2} b_{i1}b_{j2}
\frac{c_{\nu_{i1}}\,c_{\nu_{j2}}}{V_\Lambda\,c_{\nu_{i1}+\nu_{j2}-d}}\,
Z_{\Lambda,\nu_{i1}+\nu_{j2}-d}(\bm k)\\
&\quad+\mathcal Z_{G_2}(\bm 0)\sum_{i=1}^{N_1}b_{i1}\,Z_{\Lambda,\nu_{i1}}(\bm k)
+\mathcal Z_{G_1}(\bm 0)\sum_{j=1}^{N_2}b_{j2}\,Z_{\Lambda,\nu_{j2}}(\bm k),
\end{aligned}
\]
where $1/c_{\nu_{i1}+\nu_{j2}-d}$ is set to $0$ when
$\nu_{i1}+\nu_{j2}-d\in d+2\mathds N_0$. The exponents of the Epstein terms again satisfy
$\mathrm{Re}(\nu_{i1}+\nu_{j2}-d)>d$. The regular part $a_3$ is even and obeys the bound
$|a_3(\bm x)|\le C_3|\bm x|^{-\gamma_3}$ on $\Lambda\setminus\{\bm 0\}$ for some
$C_3>0$, with
\[
\gamma_3=\min\Big(\gamma_1,\;\gamma_2,\;\min_{i}\mathrm{Re}(\nu_{i1})+2,\;\min_{j}\mathrm{Re}(\nu_{j2})+2\Big),
\]
with minima over empty index sets taken as $\infty$. In particular, $\gamma_3\ge d+2$ if $\gamma_1,\gamma_2\ge d+2$, so that semi-analyticity from \cref{def:semianalytic} is preserved.
\end{theorem}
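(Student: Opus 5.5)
The plan is to work on the kernel side. Pointwise multiplication of lattice Fourier transforms corresponds to lattice convolution of the kernels, $\mathcal Z_{G_1}\mathcal Z_{G_2}=\mathcal F_\Lambda(K_1\star K_2)$ with $(K_1\star K_2)(\bm x)=\sum_{\bm y\in\Lambda}K_1(\bm y)K_2(\bm x-\bm y)$. This converges absolutely since $K_\eta\in\ell^1$. It therefore suffices to show that $K_1\star K_2$ has the claimed semi-analytic form, i.e. to identify its power-law tails.

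\textbf{Step 1: expand bilinearly.} Expanding $K_1\star K_2$ gives four kinds of terms:
\begin{itemize}
\item $a_1\star a_2$, which decays like $|\bm x|^{-\min(\gamma_1,\gamma_2)}$ by the standard splitting of the convolution sum into $|\bm y|<|\bm x|/2$, $|\bm x-\bm y|<|\bm x|/2$ and the rest;
\item the mixed terms $a_1\star\mathcal K_{\nu_{j2}}$ and $\mathcal K_{\nu_{i1}}\star a_2$;
\item the pure terms $\mathcal K_{\nu_{i1}}\star\mathcal K_{\nu_{j2}}$.
\end{itemize}

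\textbf{Step 2: the mixed terms.} For $a_1\star\mathcal K_{\nu}$, Taylor expand $\mathcal K_\nu(\bm x-\bm y)$ around $\bm x$ on the region $|\bm y|<|\bm x|/2$. The zeroth-order term gives $\big(\sum_{\bm y}a_1(\bm y)\big)\mathcal K_\nu(\bm x)=\hat a_1(\bm 0)\mathcal K_\nu(\bm x)$. The first-order term vanishes by evenness of $a_1$. The second-order term is $O(|\bm x|^{-\nu-2})$, plus tail corrections of order $|\bm x|^{-\gamma_1}$ (up to logarithms, which one absorbs by a slightly weaker exponent, or avoids by the assumption $\gamma_1\ge d+2$ as in the statement). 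The region near $\bm y=\bm x$ contributes $O(|\bm x|^{-\gamma_1})$.

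The pure-power parts of $K_1$ contribute analogously. Combining everything, the coefficient of $b_{j2}Z_{\Lambda,\nu_{j2}}$ should come out as the full $\mathcal Z_{G_1}(\bm 0)=\sum_{\bm x}K_1(\bm x)$. To arrange this, split on the Fourier side: write $Z_{\Lambda,\nu}=Z^{\mathrm{reg}}_{\Lambda,\nu}+\hat s_\nu/V_\Lambda$ by \cref{lem:holomorphy}(2). The non-analytic part of $\mathcal Z_{G_1}\mathcal Z_{G_2}$ near $\bm k=\bm 0$ is then determined by
\[
\mathcal Z_{G_1}(\bm k)\,\hat s_{\nu_{j2}}(\bm k)=\mathcal Z_{G_1}(\bm 0)\,\hat s_{\nu_{j2}}(\bm k)+O\big(|\bm k|^{2}\,|\bm k|^{\nu_{j2}-d}\big),
\]
where the linear term vanishes by evenness. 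This gives both the coefficients $\mathcal Z_{G_\eta}(\bm 0)$ and the remainder exponent $\mathrm{Re}(\nu)+2$.

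\textbf{Step 3: the cross singular term.} The product $\hat s_{\nu_{i1}}\hat s_{\nu_{j2}}/V_\Lambda^2=c_{\nu_{i1}}c_{\nu_{j2}}|\bm k|^{\nu_{i1}+\nu_{j2}-2d}/V_\Lambda^2$ is matched by $\frac{c_{\nu_{i1}}c_{\nu_{j2}}}{V_\Lambda c_{\mu}}\hat s_\mu/V_\Lambda$ with $\mu=\nu_{i1}+\nu_{j2}-d$, using $\hat s_\mu=c_\mu|\bm k|^{\mu-d}$. This gives exactly the stated coefficient. When $\mu\in d+2\mathds N_0$, the product is a smooth even polynomial in $\bm k$, so it is absorbed into the regular part; this is consistent with $1/c_\mu:=0$.

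\textbf{Step 4: the regular part.} Define $a_3$ as the lattice Fourier coefficients of $\mathcal Z_{G_1}\mathcal Z_{G_2}$ minus all the Epstein terms listed. It remains to show that $a_3$ is even and satisfies $|a_3(\bm x)|\le C_3|\bm x|^{-\gamma_3}$. Evenness is immediate. For the decay, one shows that the subtracted function is a periodic function whose only non-smoothness at the origin is $O(|\bm k|^{\min(\gamma_\eta-d,\ \mathrm{Re}(\nu)-d+2)})$. The mixed $Z^{\mathrm{reg}}\cdot\hat s$ terms are analytic times homogeneous, and the leading order has been removed. One then translates this Hölder/homogeneous regularity back into decay of the Fourier coefficients. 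Equivalently, one does the direct kernel-side estimate of Step 2, which is cleaner. I would do it on the kernel side, using dyadic splitting of the convolution sums, and show that the remainder of $\mathcal K_{\nu_1}\star\mathcal K_{\nu_2}$, after subtracting the $\mathcal K_\mu$ term and the two $\mathcal Z(\bm 0)$-weighted tails, is $O(|\bm x|^{-\min\nu-2})$. Here the $\mathcal K_\mu$ coefficient is identified via the Fourier argument in Step 3.

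\textbf{Main obstacle.} I expect the main obstacle to be this last estimate for the pure-power convolution. The near-diagonal and far regions must be controlled uniformly, and the exact cancellation against $|\bm x|^{-\mu}$ relies on the continuum identity $s_{\nu_1}\ast s_{\nu_2}=\frac{c_{\nu_1}c_{\nu_2}}{c_\mu}s_\mu$. That identity is only valid under analytic continuation, since it is divergent when $\mathrm{Re}(\nu)>d$. I would handle this by smoothly cutting off $\hat s_\nu$ near $\bm k=\bm 0$ and using the excluded set $\nu\notin d+2\mathds N_0$ to avoid logarithms.
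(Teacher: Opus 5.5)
Your plan works and all Epstein coefficients come out right, but you organize the proof differently from the paper. The paper stays on the Fourier side throughout. It writes $\mathcal Z_{G_\eta}=S_\eta/V_\Lambda+A_\eta+\hat a_\eta$ with $A_\eta=\sum_i b_{i\eta}Z^{\mathrm{reg}}_{\Lambda,\nu_{i\eta}}$, localizes with a radial cutoff $\chi$, and reads the new Epstein terms off $S_1S_2$ and $\mathcal Z_{G_\mu}(\bm 0)S_\eta$. It passes to the coefficient side only for $S_\eta(\hat a_\mu-\hat a_\mu(\bm 0))$ and $\hat a_1\hat a_2$. You instead expand $K_1\star K_2$ bilinearly on the kernel side. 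Your estimates for $a_1\star a_2$ and $a_1\star\mathcal K_\nu$ (Taylor expansion on the symmetric region $|\bm y|\le|\bm x|/2$, linear term removed by evenness) are exactly the paper's coefficient-side estimates. What remains is the single pure-power statement
\[
\mathcal K_{\nu_1}\star\mathcal K_{\nu_2}
=\frac{c_{\nu_1}c_{\nu_2}}{V_\Lambda\,c_{\mu}}\,\mathcal K_{\mu}
+Z_{\Lambda,\nu_1}(\bm 0)\,\mathcal K_{\nu_2}
+Z_{\Lambda,\nu_2}(\bm 0)\,\mathcal K_{\nu_1}
+O\big(|\bm x|^{-\min(\mathrm{Re}(\nu_1),\mathrm{Re}(\nu_2))-2}\big),
\]
with $\mu=\nu_1+\nu_2-d$ and $1/c_\mu:=0$ for $\mu\in d+2\mathds N$. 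Summing over the representations, the coefficient $\mathcal Z_{G_1}(\bm 0)=\hat a_1(\bm 0)+\sum_i b_{i1}Z_{\Lambda,\nu_{i1}}(\bm 0)$ assembles automatically, and evenness of $a_3$ is immediate. Your route gives cleaner bookkeeping: the Fourier analysis only touches the analytic $Z^{\mathrm{reg}}$ and homogeneous functions, never the non-smooth $\hat a_\eta$. The paper's one-shot Fourier decomposition instead mirrors the algorithm, which samples the product on $\mathrm{BZ}_n$.

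To turn this into a proof, do the pure-power statement on the Fourier side, not by kernel-side dyadic splitting. Subtract the three Epstein terms from $Z_{\Lambda,\nu_1}Z_{\Lambda,\nu_2}$. Away from $\bm k=\bm 0$ everything is analytic. On $\mathrm{supp}\,\chi$, the product of the Riesz parts cancels the $Z_{\Lambda,\mu}$ term up to the analytic $Z^{\mathrm{reg}}_{\Lambda,\mu}$. What is left is $\big(Z^{\mathrm{reg}}_{\Lambda,\nu_1}-Z^{\mathrm{reg}}_{\Lambda,\nu_1}(\bm 0)\big)\hat s_{\nu_2}\chi/V_\Lambda$, its mirror image, and an analytic function. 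Since $Z^{\mathrm{reg}}$ is analytic and even, the bracket vanishes to second order, and expanding it into homogeneous polynomials gives coefficients $O(|\bm x|^{-\mathrm{Re}(\nu_2)-2})$. No continuum composition formula for $s_{\nu_1}\ast s_{\nu_2}$ is needed, so your main obstacle disappears. A kernel-side derivation would instead have to recover the $|\bm x|^{-\mu}$ coefficient from a regularized integral plus lattice corrections.

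Three further corrections:
\begin{enumerate}
\item Your Step 2 display is false as written. $\mathcal Z_{G_1}-\mathcal Z_{G_1}(\bm 0)$ is not $O(|\bm k|^2)$: it contains the terms $c_{\nu_{i1}}|\bm k|^{\nu_{i1}-d}/V_\Lambda$ (your Step 3 cross term) and $\hat a_1$, which need not be $C^2$ at the origin when $\gamma_1\le d+2$. Only $A_1$ may appear there.
\item A pointwise bound $O(|\bm k|^{\alpha})$ near the origin does not by itself give $|\bm x|^{-d-\alpha}$ decay of the coefficients. Only the homogeneous-times-smooth version of your Step 4 works.
\item The logarithm at $\gamma_1=d+2$ costs nothing. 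It multiplies $|\bm x|^{-\mathrm{Re}(\nu)-2}$ with $\mathrm{Re}(\nu)+2>\gamma_1$, so it is dominated by $|\bm x|^{-\gamma_1}$ and $\gamma_3$ need not be weakened. The hypothesis $\gamma_1\ge d+2$ does not avoid it; that is precisely where it occurs.
\end{enumerate}
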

\begin{proof}
We begin with a preparatory step, decomposing each Epstein zeta function in the two graph lattice sums into its regular part and a Riesz kernel,
$Z_{\Lambda,\nu}=Z^{\mathrm{reg}}_{\Lambda,\nu}+\hat s_\nu/V_\Lambda$ with
$\hat s_\nu(\bm k)=c_\nu|\bm k|^{\nu-d}$ the distributional Fourier transform of $\vert\bm \cdot\vert^{-\nu}$ on $\mathds R^d$, see \cref{lem:holomorphy}. Note that $Z^{\mathrm{reg}}_{\Lambda,\nu}$ is analytic in a complex
neighbourhood of $\bm k=\bm 0$ and $Z_{\Lambda,\nu}$ is analytic on
$\mathrm{BZ}\setminus\Lambda^\ast$. We can now separate each graph lattice sum $\mathcal Z_{G_\eta}$ into singular terms at $\bm k=\bm 0$ plus a remainder, 
\[
\mathcal Z_{G_\eta}=\frac{1}{V_\Lambda}S_\eta+R_\eta
\]
with
\[
S_\eta=\sum_{i=1}^{N_\eta} b_{i\eta}\,\hat s_{\nu_{i\eta}},
\qquad
A_\eta=\sum_{i=1}^{N_\eta} b_{i\eta}\,Z^{\mathrm{reg}}_{\Lambda,\nu_{i\eta}},
\qquad
R_\eta=A_\eta+\hat a_\eta,
\]
with the Fourier series $\hat a_\eta(\bm k)= (\mathcal F_\Lambda a_\eta)(\bm k)$.
Note that both $A_\eta$ and $\hat a_\eta$ are even, the former as $-\Lambda=\Lambda$, the latter by semi-analyticity. Therefore, also $R_\eta$ is even.

In the next step, fix a radially symmetric cutoff function $\chi\in C^\infty(\mathrm{BZ},\mathds R_{\ge0})$ with $\chi=1$ on $|\bm k|<r/2$ and $\chi=0$ on $|\bm k|>r$. Choose the radius $r>0$ small enough, such that both $r<\rho_{\Lambda^\ast}$ with $\rho_{\Lambda^\ast}=\min_{\bm q\in\Lambda^\ast\setminus\{\bm 0\}}|\bm q|$, and such that $\chi$ vanishes in an environment of $\partial\mathrm{BZ}$.  By \cref{lem:holomorphy}, $Z^{\mathrm{reg}}_{\Lambda,\nu_{i\eta}}$ is then analytic on a complex ball of radius
$(\rho_{\Lambda^\ast}-r)/\sqrt2$ about every point of $\mathrm{supp}\,\chi$. Split the product of the two graph lattice sums into $\mathcal Z_{G_1} \mathcal Z_{G_2}=\mathcal Z_{G_1} \mathcal Z_{G_2}(1-\chi)+\mathcal Z_{G_1} \mathcal Z_{G_2}\chi$, the first term periodic and as regular as $\hat a_\eta$, the second carrying the localized power-law singularities at $\bm k=\bm 0$.

(\emph{Step 1:\,Regular part}) We begin the discussion with the regular part. Expand $\mathcal Z_{G_1}\mathcal Z_{G_2}(1-\chi)$ into the four products of Epstein and Fourier parts. The term involving products of Epstein zeta functions is a product of analytic functions on $\mathrm{supp}(1-\chi)$ multiplied by a smooth periodic cutoff. 
Therefore, it is smooth and periodic on all of $\mathrm{BZ}$, thus exhibiting superalgebraically decaying Fourier coefficients. 

The Fourier coefficients of the two mixed terms are given by the lattice convolution of a superalgebraically decaying function with $a_\eta$, hence decaying asymptotically as $|\bm x|^{-\gamma_\eta}$. 
Finally, the Fourier coefficients of $\hat a_1 \hat a_2$ are $\sum_{\bm y\in \Lambda}a_1(\bm y)a_2(\bm x-\bm y)$. Split the sum at $|\bm y|=|\bm x|/2$. For $|\bm y|\le |\bm x|/2$, bound $|a_2(\bm x-\bm y)|\le C_2(|\bm x|/2)^{-\gamma_2}$ by the inverse triangle inequality. For $|\bm y|>|\bm x|/2$, bound $|a_1(\bm y)|\le C_1 (|\bm x|/2)^{-\gamma_1}$. The absolute value of the sum is bounded by the $\ell^1$ norm of the remaining summand. This yields
\[
\sum_{\bm y\in \Lambda}|a_1(\bm y) a_2(\bm x-\bm y)|\le \Vert a_1\Vert_{\ell^1}C_2(|\bm x|/2)^{-\gamma_2}+\Vert a_2\Vert_{\ell^1}C_1(|\bm x|/2)^{-\gamma_1}\le C |\bm x|^{-\min(\gamma_1,\gamma_2)},
\]
for some $C>0$. All of this is absorbed into $a_3$.

(\emph{Step 2:\,Singular part}) We then proceed with the localized singular part. On $\mathrm{supp}\,\chi$,
\[
\mathcal Z_{G_1}\mathcal Z_{G_2}
=\frac{S_1S_2}{V_\Lambda^{2}}
+\frac{S_1R_2+S_2R_1}{V_\Lambda}
+R_1R_2,
\]
yielding three groups requiring separate analysis. 
For the first group, we have
\[
\frac{S_1S_2}{V_\Lambda^{2}}=\sum_{i=1}^{N_1}\sum_{j=1}^{N_2}b_{i1}b_{j2} \frac{\hat s_{\nu_{i1}}\hat s_{\nu_{j2}}}{V_\Lambda^{2}}.
\]
For $\nu\notin d+2\mathds N_0$, we have $\hat s_\nu=c_\nu|\bm k|^{\nu-d}$, avoiding potential logarithmic terms. By the restriction on $\nu_{i\eta}$, the resulting product is therefore again a power-law,
\[
\frac{\hat s_{\nu_{i1}}\hat s_{\nu_{j2}}}{V_\Lambda^{2}}=\frac{c_{\nu_{i1}}c_{\nu_{j2}}}{V_\Lambda^{2}}\,|\bm k|^{\nu_{i1}+\nu_{j2}-2d}=\frac{c_{\nu_{i1}}c_{\nu_{j2}}}{V_\Lambda\,c_{\nu_{i1}+\nu_{j2}-d}}\frac{\hat s_{\nu_{i1}+\nu_{j2}-d}}{V_\Lambda}.
\]
After writing $\hat s_\mu/V_\Lambda=Z_{\Lambda,\mu}-Z^{\mathrm{reg}}_{\Lambda,\mu}$, we note that $Z^{\mathrm{reg}}_{\Lambda,\mu}\chi$ is smooth and periodic, which can therefore be absorbed in the Fourier series. This provides the double sum of $N_1N_2$ Epstein zeta functions. Here and below, $Z_{\Lambda,\mu}\chi$ differs from $Z_{\Lambda,\mu}$ by the smooth periodic function $Z_{\Lambda,\mu}(1-\chi)$, which is absorbed into the Fourier series as well. If $\nu_{i1}+\nu_{j2}-d=d+2n$, the product is a polynomial in $\bm k$ and hence analytic, and its product with $\chi$ can therefore be completely absorbed into the Fourier series. Correspondingly, the coefficient $c_{\nu_{i1}+\nu_{j2}-d}$ exhibits a pole there, so that with the convention $1/c_{\nu_{i1}+\nu_{j2}-d}=0$ no Epstein term is generated.

For the second group, $\mathrm{Re}(\nu)>d$ gives $\hat s_\nu(\bm 0)=0$, yielding
$R_\mu(\bm 0)=\mathcal Z_{G_\mu}(\bm 0)$. For $\eta\neq\mu$, apply the split
\[
S_\eta R_\mu=\mathcal Z_{G_\mu}(\bm 0)\,S_\eta
+S_\eta\big(A_\mu-A_\mu(\bm 0)\big)
+S_\eta\big(\hat a_\mu-\hat a_\mu(\bm 0)\big).
\]
The first term contributes
$\mathcal Z_{G_\mu}(\bm 0)\sum_i b_{i\eta}Z_{\Lambda,\nu_{i\eta}}$. For $(\eta,\mu)=(1,2)$ and $(2,1)$, we obtain the two single sums in the theorem statement. In the second term, we use that $A_\mu$ is analytic and even. Hence, its
first-order Taylor coefficient vanishes and Cauchy estimates on the ball of radius
$(\rho_{\Lambda^\ast}-r)/\sqrt2$ give $A_\mu-A_\mu(\bm 0)=\mathcal O(|\bm k|^{2})$ on $\mathrm{supp}\,\chi$. Each summand is then $|\bm k|^{\nu_{i\eta}-d}\chi$ times a smooth function vanishing to second order at $\bm 0$. Expanding this function into homogeneous polynomials shows that the Fourier coefficients of the summand are of order $|\bm x|^{-(\mathrm{Re}(\nu_{i\eta})+2)}$.  The third term needs to be estimated on the coefficient side. Up to superalgebraically decaying corrections, the coefficients of $\hat s_\nu\chi/V_\Lambda$ are equal to $\mathcal K_\nu$, and $\hat a_\mu(\bm 0)=\sum_{\bm y\in \Lambda}a_\mu(\bm y)$. Therefore, the coefficients of $S_\eta(\hat a_\mu-\hat a_\mu(\bm 0))\chi/V_\Lambda$ are, up to corrections of order $|\bm x|^{-\gamma_\mu}$,
\[
\sum_{i=1}^{N_\eta} b_{i\eta}\sum_{\bm y\in\Lambda}
\Big(\mathcal K_{\nu_{i\eta}}(\bm x-\bm y)-\mathcal K_{\nu_{i\eta}}(\bm x)\Big)
a_\mu(\bm y).
\]
As before, split the inner sum at $|\bm y|=|\bm x|/2$. On $|\bm y|\le|\bm x|/2$, the terms in brackets obey
\[\mathcal K_{\nu_{i\eta}}(\bm x-\bm y)-\mathcal K_{\nu_{i\eta}}(\bm x)=-\bm y\cdot\nabla\mathcal K_{\nu_{i\eta}}(\bm x)
+\mathcal O\big(|\bm y|^{2}|\bm x|^{-\mathrm{Re}(\nu_{i\eta})-2}\big).\] The linear term cancels because the summation region is symmetric and $a_\mu$ is even, and $\sum_{|\bm y|\le R}|\bm y|^{2}|a_\mu(\bm y)| =\mathcal O\big(C_\mu R^{(d+2-\gamma_\mu)_+}\big)$, with a logarithm in place of the power in case that $\gamma_\mu=d+2$. 
This region therefore contributes the exponent $\mathrm{Re}(\nu_{i\eta})+\min(2,\gamma_\mu-d)$. On $|\bm y|>|\bm x|/2$ the two terms are bounded separately, by $C_\mu(|\bm x|/2)^{-\gamma_\mu}\|\mathcal K_{\nu_{i\eta}}\|_{\ell^1}$ and by $|\bm x|^{-\mathrm{Re}(\nu_{i\eta})}\sum_{|\bm y|>|\bm x|/2}|a_\mu(\bm y)|$, of exponents $\gamma_\mu$ and $\mathrm{Re}(\nu_{i\eta})+\gamma_\mu-d$ respectively, where $\|\mathcal K_\nu\|_{\ell^1}=Z_{\Lambda,\mathrm{Re}(\nu)}(\bm 0)$.

For the third group, $A_1A_2\chi$ is smooth and periodic, $A_\eta \hat a_\mu\chi$ has
coefficients given by a convolution of a superalgebraically decaying sequence with
$a_\mu$, and $\hat a_1 \hat a_2$ was treated in Step~1. Therefore, all decay as
$|\bm x|^{-\min(\gamma_1,\gamma_2)}$ or faster.

Taking the minimum over all exponents yields $\gamma_3$. Step~1 and the third group of Step~2 give $\min(\gamma_1,\gamma_2)$. In the second group of Step~2, the second term gives $\min_i\mathrm{Re}(\nu_{i\eta})+2$, and the third term gives $\gamma_\mu$ and $\mathrm{Re}(\nu_{i\eta})+\min(2,\gamma_\mu-d)$. The latter equals $\mathrm{Re}(\nu_{i\eta})+2$ for $\gamma_\mu\ge d+2$ and exceeds $\gamma_\mu$ otherwise. All other terms are smooth and periodic. The logarithm at $\gamma_\mu=d+2$ does not enter the bound, since $|\bm x|^{-\mathrm{Re}(\nu_{i\eta})-2}\log|\bm x|$ decays faster than $|\bm x|^{-\gamma_\mu}$.
Finally, every contribution is a product or convolution of even functions, as $\chi$ is radial, so $a_3$ is even and $\mathcal Z_{G_1}\mathcal Z_{G_2}$ is semi-analytic in the sense of \cref{def:semianalytic}.
\end{proof}

Using \cref{thm:convolution,thm:multiplication}, multiplications and convolutions can be carried out within a semi-analytical framework. However, applying this procedure requires an additional postprocessing step, as otherwise an explosion of the number of Epstein zeta terms with the number of operations would occur. Starting with $N_1$ and $N_2$ Epstein terms, a convolution results in $N_1 N_2$ Epstein terms while a multiplication yields $N_1 N_2 +N_1+N_2$ terms. While terms with duplicate exponents can be merged by adding their prefactors, this is not sufficient to reduce the overall increase in computationally expensive Epstein terms. In addition, cancellation error occurs when applying graph multiplication to Epstein zeta functions with $\mathrm{Re}(\nu_i)\gg d$. As both graph multiplication and convolution raise the real part of the exponents, this effect always occurs after a sufficiently large number of operations. Both challenges are overcome by defining a cutoff exponent $\nu_\mathrm{max}=d+\sigma_\mathrm{max}$, above which Epstein zeta functions are moved into the Fourier series. 

For any semi-analytic kernel $K$ and $\sigma_\mathrm{max}\ge2$, compressing the computationally expensive Epstein terms with $\mathrm{Re}(\nu_j)>\nu_\mathrm{max}$ by moving them into the Fourier series leaves the kernel unchanged while preserving semi-analyticity, as $\mathrm{Re}(\nu_j)\ge d+2$. The following lemma bounds the error of a subsequent truncation of the Fourier series to the balanced cell $\Lambda_n$.

\begin{lemma}[Truncation error]
\label{lem:compression}
Consider a kernel $K$ with $|K(\bm x)|\le c \vert \bm x\vert^{-\gamma}$, with $\gamma>d$, for $\bm x\neq \bm 0$ and $c>0$. Then, for $n \in \mathds N$, $n\ge 4$,
\[
\left \vert \sum_{\bm x\in \Lambda\setminus \Lambda_n} e^{-2\pi i \bm x\cdot \bm k} K(\bm x) \right \vert \le c \,C_{\Lambda,\gamma}\, n^{-(\gamma-d)},
\]
uniformly in $\bm k\in \mathrm{BZ}$ for some $C_{\Lambda,\gamma}>0$ depending only on $\Lambda$ and $\gamma$.
\end{lemma}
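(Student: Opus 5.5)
The plan is to drop the oscillatory phase immediately and bound the tail by the absolute sum
\[
\bigg|\sum_{\bm x\in\Lambda\setminus\Lambda_n} e^{-2\pi i\bm x\cdot\bm k}K(\bm x)\bigg|\le c\sum_{\bm x\in\Lambda\setminus\Lambda_n}|\bm x|^{-\gamma}.
\]
Uniformity in $\bm k$ is then automatic. The work reduces to a purely geometric lattice tail estimate.

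Next I show that every $\bm x\in\Lambda\setminus\Lambda_n$ is far from the origin. Write $\bm x=A\bm m$ with $\bm m\in\mathds Z^d$. If $\bm x\notin\Lambda_n$, some component satisfies $m_i\ge\lfloor n/2\rfloor+1$ or $m_i\le-\lceil n/2\rceil$. Hence $\|\bm m\|_\infty\ge n/2$, and
\[
|\bm x|\ge \sigma_{\min}(A)\,|\bm m|\ge \sigma_{\min}(A)\,\frac n2=:\rho_n,
\]
with $\sigma_{\min}(A)>0$ the smallest singular value of the regular matrix $A$. It therefore suffices to bound $\sum_{\bm x\in\Lambda,\,|\bm x|\ge\rho_n}|\bm x|^{-\gamma}$ by a constant times $\rho_n^{d-\gamma}$, since $\rho_n^{d-\gamma}=(\sigma_{\min}(A)/2)^{d-\gamma}n^{-(\gamma-d)}$.

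For the lattice tail, the main step, I compare the sum with an integral using the cells $\bm x+E_\Lambda$. These cells tile $\mathds R^d$, each has volume $V_\Lambda$, and each is contained in the ball $B(\bm x,R_\Lambda)$ with $R_\Lambda$ the circumradius of $E_\Lambda$. For $\bm y\in\bm x+E_\Lambda$ one has $|\bm y|\le|\bm x|+R_\Lambda$. When $|\bm x|\ge 2R_\Lambda$ this gives $|\bm y|\le 2|\bm x|$, hence $|\bm x|^{-\gamma}\le 2^\gamma |\bm y|^{-\gamma}$, and also $|\bm y|\ge|\bm x|/2$. Summing over all $\bm x$ with $|\bm x|\ge\rho_n$ gives
\[
\sum_{|\bm x|\ge\rho_n}|\bm x|^{-\gamma}\le\frac{2^\gamma}{V_\Lambda}\int_{|\bm y|\ge\rho_n/2}|\bm y|^{-\gamma}\,\mathrm d\bm y=\frac{2^\gamma}{V_\Lambda}\,\frac{|S^{d-1}|}{\gamma-d}\Big(\frac{\rho_n}{2}\Big)^{d-\gamma},
\]
which is the desired $n^{-(\gamma-d)}$ scaling.

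The one subtlety is the requirement $|\bm x|\ge 2R_\Lambda$, which fails for the first few $n$. I would handle it in one of two ways. The first option uses the hypothesis $n\ge4$: each excluded point has $\|\bm m\|_\infty\ge 2$, and for small $n$ the finitely many remaining shells are absorbed into the constant. The cleaner option splits $n$ into $\sigma_{\min}(A)n/2\ge 2R_\Lambda$, where the integral comparison applies, and the finitely many smaller $n$. For those small $n$ the tail is at most $Z_{\Lambda,\gamma}(\bm 0)$, which is finite because $\gamma>d$, and it is bounded by a multiple of $n^{-(\gamma-d)}$ since $n$ ranges over a bounded set. In both cases $C_{\Lambda,\gamma}$ depends only on $A$ (through $\sigma_{\min}$, $R_\Lambda$, $V_\Lambda$) and on $\gamma$, which completes the lemma.
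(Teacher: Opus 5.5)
Your proof is correct, but its main step differs from the paper's. Both arguments discard the phase and use $|A\bm m|\ge\alpha|\bm m|_\infty$, with $\alpha$ the smallest singular value of $A$, together with $|\bm m|_\infty\ge\lceil n/2\rceil$ whenever $A\bm m\notin\Lambda_n$.

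The paper applies the singular-value bound to every term and then works entirely in $\mathds Z^d$. It counts the points on each sup-norm shell $|\bm m|_\infty=j$ by $(2j+1)^d-(2j-1)^d\le 2d(3j)^{d-1}$. It then compares the resulting one-dimensional series $\sum_{j\ge\lceil n/2\rceil}j^{-(\gamma-d+1)}$ with $\int_{n/4}^\infty t^{-(\gamma-d+1)}\,\mathrm dt$. The hypothesis $n\ge4$ is used precisely to guarantee $\lceil n/2\rceil-1\ge n/4$. This gives the explicit constant $C_{\Lambda,\gamma}=\alpha^{-\gamma}\,2d\,3^{d-1}4^{\gamma-d}/(\gamma-d)$ for all $n\ge4$ at once, with no case distinction.

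Your comparison with a radial integral, via the tiling by the cells $\bm x+E_\Lambda$, is more geometric. Its lattice dependence $V_\Lambda^{-1}\sigma_{\min}(A)^{-(\gamma-d)}$ never exceeds the paper's $\alpha^{-\gamma}$, because $V_\Lambda\ge\sigma_{\min}(A)^d$. After the case split it also holds for every $n\in\mathds N$, so $n\ge4$ plays no role for you. The price is the split at $\sigma_{\min}(A)\,n/2<2R_\Lambda$, handled through $Z_{\Lambda,\gamma}(\bm 0)$ with a less explicit constant.

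Of your two ways of treating small $n$, only the second is self-contained. For a skewed basis, $\|\bm m\|_\infty\ge2$ does not by itself give $|\bm x|\ge2R_\Lambda$, so the first option falls back on the same finiteness argument.
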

\begin{proof}
    Consider $\bm m \in \mathds Z^d$. Then $A\bm m\not\in \Lambda_n$ implies $\vert \bm m\vert_\infty\ge \lceil n/2\rceil$. Further, $\vert A\bm m\vert\ge \alpha \vert \bm m\vert_{\infty}$ with $\alpha$ the smallest singular value of $A$. The sum is then uniformly bounded in $\bm k$ by
    \[
    c\,\alpha^{-\gamma}\sum_{\substack{\bm m\in \mathds Z^d\\ \vert \bm m\vert_\infty\ge \lceil n/2\rceil}}\vert \bm m\vert_\infty^{-\gamma}.
    \]
    The shell with $\vert \bm m\vert_\infty = j$, including $(2j+1)^d-(2j-1)^d$ points, lies on the faces of the associated cube with $2d$ faces, each with $(2j+1)^{d-1}$ points. Hence, the number of points in the shell is bounded by
    \[
    (2j+1)^d-(2j-1)^d\le  2d (2j+1)^{d-1}\le 2d\,(3j)^{d-1}.
    \]
    Using that $t\mapsto t^{-(\gamma-d+1)}$ is decreasing, together with $\gamma>d$, and  $\lceil n/2 \rceil-1\ge n/4$ allows us to bound the sum by 
    \[
    \sum_{\substack{\bm m\in \mathds Z^d\\ \vert \bm m\vert_\infty\ge \lceil n/2\rceil}}\vert \bm m\vert_\infty^{-\gamma}\le 2d\,3^{d-1}\int_{n/4}^\infty t^{-(\gamma-d+1)}\,\mathrm d t= \frac{2d\,3^{d-1}}{\gamma-d}(n/4)^{-(\gamma-d)},
    \]
    which yields the stated bound.
\end{proof}

The algebra defined by \cref{thm:convolution,thm:multiplication} allows us to generate arbitrary series-parallel graph lattice sums for blocks with $|E|$ edges from the same number of bridges, if their kernels admit the semi-analytic form in \cref{def:semianalytic}. The following remark summarizes the resulting scaling of numerical cost with the number of operations, focussing on the case of uniform power-law kernels, which is of high importance for physical applications, i.e. in the LRTFIM.

\begin{remark}[Cost of series-parallel graph zeta functions]
\label{rem:sp_cost}
Consider power-law kernels $K_e=\mathcal K_\nu$ on all edges, $\sigma=\mathrm{Re}(\nu)-d>0$, and compress at
$\nu_{\mathrm{max}}=d+\sigma_{\mathrm{max}}$, $\sigma_{\mathrm{max}}\ge2$, after each of the
$|E|-1$ operations of \cref{thm:tw2_zeta}. A term combining $m$ edges, either through multiplication or convolution, exhibits exponents
$m\nu-jd$ with $j<m$, of real part at least $d+m\sigma$, so only terms with
$m\le\sigma_{\mathrm{max}}/\sigma$ survive. After merging equal exponents their number is
bounded independently of the graph. For a path with $\sigma=1.3$ and
$\sigma_{\mathrm{max}}=4$, for instance, only $\nu$, $2\nu-d$ and $3\nu-2d$ are kept, however long the
path. Exponents at and near $d+2\mathds N$, corresponding to logarithmic singularities appearing at $\bm k=\bm 0$ in the Epstein zeta function, are treated in the following remark. When storing regular parts on the truncated lattice
$\Lambda_n$, each convolution is a pointwise product of $n^d$ kernel values and each
multiplication one fast Fourier transform, so $\zeta_G$ requires
$\mathcal O(|E|\,n^d\log (n^d))$ operations.
\end{remark}

In \cref{thm:multiplication}, we have excluded exponents $\nu\in d+2\mathds N$, where the singularity of the Epstein zeta function at $\bm k=\bm 0$ exhibits an additional logarithm factor. The following remark discusses how this particular case is stably handled.

\begin{remark}[Avoidance of poles]
\label{rem:pole_family}
Exponents in $d+2\mathds N$, where $c_\nu$ has a pole and $\hat s_\nu$ is logarithmic by \cref{lem:holomorphy}, are excluded in \cref{thm:multiplication}, as they would require generalized Epstein zeta functions with additional powers of logarithms in the kernel. For uniform exponents, such exponents arise for isolated $\sigma$ only. A term $b\mathcal K_\nu$ with such an exponent is compressed into the regular part before a multiplication, which leaves the kernel unchanged and preserves semi-analyticity, as $\mathrm{Re}(\nu)\ge d+2$. A product exponent $\nu_{i1}+\nu_{j2}-d$ landing on $d+2\mathds N$ carries the coefficient zero in \cref{thm:multiplication}. No term is lost by this convention, as the product of the two singular parts is then a polynomial in $\bm k$, which is included in the Fourier series.
\end{remark}

After discussing the cost of constructing series-parallel graphs from single edges, we comment on the achieved accuracy when truncating the short-range part to a finite $\Lambda_n$.

\begin{remark}[Accuracy]
    \label{rem:accuracy}
Multiplications of graph zeta functions by \cref{thm:multiplication} are exact on $\mathrm{BZ}_n$, as the product is sampled there. However, the regular part of the product is obtained from these samples and therefore contains the coefficients outside of $\Lambda_n$ folded onto $\Lambda_n$. This causes an error of order $n^{-\gamma}$ in every subsequent convolution, with $\gamma$ the decay exponent from \cref{thm:multiplication}. Two kinds of regular parts are instead truncated to $\Lambda_n$, namely the regular part produced by a convolution and a term $b_j\mathcal K_{\nu_j}$ moved into the regular part through compression. Their samples miss the tail of the Fourier series, which is of order $n^{-(\gamma'-d)}$ at $\bm k=\bm 0$ by \cref{lem:compression}, where $\gamma'$ is the decay exponent from \cref{thm:convolution} or the exponent of the compressed term, and smaller away from $\bm k=\bm 0$, where the tail oscillates. A two-connected block closes with a convolution, since a series composition at the root would create a cut vertex. The truncated samples therefore enter only through a multiplication, which averages them over $\mathrm{BZ}_n$, and the truncation costs order $n^{-\gamma'}$ as well. For uniform exponents $\nu=d+\sigma$ with $\sigma_{\mathrm{max}}\ge\sigma+2$ and no exponent in $d+2\mathds N$, $\gamma$ is at least $\nu+2$ and $\gamma'$ exceeds $\nu+2$, so the error of a series-parallel block scales as $n^{-(d+\sigma+2)}$, see \cref{fig:sp_algebra_error_n}. A term compressed by \cref{rem:pole_family} has $\gamma'=d+2m$ for $\nu=d+2m$ and costs order $n^{-(d+2m)}$, see panel (b) of \cref{fig:sp_algebra_error_n} at $\sigma=1/2$. All errors are uniform in $\bm k$.
\end{remark}

We note that the error scaling predicted in the above remark is also observed numerically in \cref{fig:sp_algebra_error_n} in the later \cref{sec:benchmarks}.

The compression in \cref{rem:pole_family} avoids exponents on $d+2\mathds N$, but exponents close to it require care as well. For such a term, the cross terms of \cref{thm:multiplication} carry the prefactor $c_\nu$, whose gamma factor grows like the inverse distance to the pole, and the Fourier series has to compensate them. Both parts are then large and cancel, which amplifies the errors of \cref{rem:accuracy} by this factor. The implementation therefore compresses such a term when the distance is below $10^{-2}$ and its coefficient dominates the representation. A product exponent near $d+2\mathds N$ has a small coefficient, proportional to the distance by \cref{thm:multiplication}, and is left in place. If it lands within $10^{-4}$ of $d+2\mathds N$, it is placed at a distance of $10^{-6}$ above the pole with the corresponding coefficient, so that the polynomial of \cref{rem:pole_family} is carried by an Epstein term rather than by the truncated Fourier series, where it would cancel against the analytic part.

Combined, \cref{thm:convolution,thm:multiplication,lem:compression} transform the combinatorial procedure from \cref{sec:sp-construction} into a numerical algorithm for stably computing graph lattice sums and graph zeta functions for general series parallel blocks operating entirely on triples $(a,\bm b,\bm\nu)$. Within the elimination procedure, parallel edges are eliminated with convolution and nodes of degree 2 by multiplication, yielding the graph lattice sum of the reduced graph at numerical accuracy. Meanwhile, the compression after every operation keeps the number of Epstein zeta terms bounded to small numbers even for large operations counts by discarding terms with exponents larger than a threshold value into the Fourier series, while retaining the leading order singularities in $\bm k$ analytically. Importantly, it not only keeps the representation bounded and numerically tractable, but also avoids operations that are numerically harmful, since Epstein zeta functions with $\mathrm{Re}(\nu)\gg d$
approach the constant $\mathcal K_\nu$-sum over the shortest lattice vectors which would lead to cancellation error. 

Combining the results of the previous three section, any graph lattice sum for any graph whose blocks are either bridges or series-parallel can be computed, corresponding to blocks with $\mathrm{tw}(\tilde G)\le 2$ by \cref{cor:sp_tw_equivalence}, with $G$ the augmented block with the edge between terminals added. These blocks form the largest part of physical corpora. For the 0qp corpus  of the LRTFIM till order $r=13$, the $70.35\,\%$ analytic blocks, computable by \cref{sec:basic_blocks}, are accompanied by remaining $23.34\,\%$ (4\,483) additional series-parallel blocks, leading to a total of $93.70\,\%$ blocks that can be computed by the methods discussed so far. For the 1qp corpus till $r=11$, the $83.87\,\%$ of analytic blocks are accompanied by $14.09\,\%$ (9\,863) of series-parallel blocks, leading to an even higher total of $97.96\,\%$ of computable blocks. The remaining $6.30\,\%$ of blocks for 0qp and $2.95\,\%$ for 1qp are blocks with genuine treewidth $\mathrm{tw}(\tilde G)\ge 3$ that require a different evaluator, which is discussed in the next section.

\section{Tensor-network bucket elimination for highly connected graphs}
\label{sec:tensor}

The previous sections have presented an efficient method for evaluating the graph lattice sum for any graph that consists of blocks with $\mathrm{tw}(\tilde G)\le 2$. Albeit rare, blocks with $\mathrm{tw}(\tilde G)\ge 3$ do occur in high-order physical corpora and can in general not be neglected if precise results are sought. For these blocks, series-parallel reduction stalls and is unable to create the graph from two operations alone. 

The first important fact to notice is that, while series-parallel reduction does not reduce such a block to a single bridge outright, the work of the preceding sections remains highly useful even at larger treewidth. The reason is that we can still apply the series-parallel reduction, replacing any series-parallel part of the block by a single bridge with effective kernel $K^{\mathrm{eff}}_e$. Once the reduction stalls, every node other than the terminals has degree at least three. This does not only reduce numerical work by reducing the number of node positions to be summed over, but also improves accuracy under discretization.  The resulting sum converges more rapidly, due to the number of edges connecting to a node (again, excluding terminals) being at least $3$, and only a suitable discretization scheme needs to be found. We evaluate the arising sums efficiently through tensor network bucket elimination \cite{Dechter1999}, a special case of the sum-of-products framework of \cite{AjiMcEliece2000}. There the numerical cost is no longer exponential in the number of nodes $|V|$, which reaches $12$ for the LRTFIM corpora, but only in the block's treewidth, which never exceeds $4$ for any graph in the corpora. Note that tensor network contraction has been used for simulating quantum computations, i.e.~in \cite{MarkovShi2008}. An important ingredient, which separates our application from standard cases, is that the edge kernels of graph lattice sums only depend on relative distances and are thus convolution operators, which removes up to two powers from the numerical scaling. In total, the reduction in complexity renders all remaining blocks within the LRTFIM corpora computable to high precision on a standard desktop machine, even for lattices in $d=3$ spatial dimensions and interaction exponents close to $d$.

\begin{definition}[Discretized graph lattice sums]
\label{def:discretized}
Consider a block $G=(V,E,K,s,t)$ with pinning node $p$ and let $n\in\mathds N$. We consider two discretizations of the graph lattice sum in \cref{def:graph-lattice-sum}, a periodic scheme and a box truncation. Both replace $\Lambda$ by the balanced cell $\Lambda_n$ from \cref{sec:algebra}, which we recall as
\[
\Lambda_n=A\,\{-\lceil n/2\rceil+1,\ \dots,\ \lfloor n/2\rfloor\}^d,
\]
so that each vertex ranges over $n^d$ positions and each residue class modulo $n\Lambda$ occurs exactly once. They differ only in how edge differences $\delta \bm x_e$ are formed. In both cases, the discretized graph lattice sum reads \[
\mathcal Z^{(n)}_{G}(\bm k)=\sum_{\{\bm x_v\in \Lambda_n\}_{v\neq p}}
e^{-2\pi i\,\delta\bm x^{(n)}_{(s,t)}\cdot\bm k}\prod_{e\in E}K_e(\delta\bm x^{(n)}_e),
\]
where we choose $\bm x_p=\bm 0$. The periodic scheme reduces the differences modulo $n\Lambda$ back into $\Lambda_n$, hence $\delta \bm x_e^{(n)}\in \Lambda_n$, whereas the box truncation takes the original definition $\delta \bm x^{(n)}_e=\delta\bm x_e=\bm x_{e^+}-\bm x_{e^-}\in\Lambda$.
\end{definition}
As already established before, the direct evaluation of the above sum is prohibitively expensive and only possible for the smallest blocks, small space dimension $d$, and rapidly decaying kernels. Its summand, however, is a product of tensors, one for each edge and each carrying the positions of two vertices as its indices. Such a product, together with the prescription of which indices to sum over, is a tensor network and its tensors are referred to as factors in the literature. We can then perform the sum one index at a time. One collects all tensors carrying the chosen index, multiplies them, and then sums over the index. This leads to a new tensor, that carries the remaining indices of the collected factors. If we iterate this procedure until no index is left, the sum is evaluated and the cost in memory and time is not governed by $\vert V\vert$, but by the largest number of indices an intermediate tensor carries. The associated method is called tensor network bucket elimination, developed by Rina Dechter in \cite{Dechter1999}. The
following definition provides a formal definition of the required tensor network concepts, following \cite{Dechter1999,kschischang2001factor,koller2009probabilistic}.

\begin{definition}[Tensor-network representation of graph lattice sums]
\label{def:factors}
For a scope $U\subseteq V$, let $\mathcal X_U
    =
    \left\{
        \bm x_U:U\to\Lambda_n
    \right\}$, with $\bm x_v=\bm x_U(v)$,
be the configuration space of the vertices in $U$. 
A factor is a function $\phi:\mathcal X_U\to\mathds C$ with
$\operatorname{scope}(\phi)=U$, and a tensor network is a finite collection of
such factors. Thus, $\phi$ can be identified as a tensor of order $|U|$, with one mode of length
$n^d$ for each vertex in its scope. A two-point factor with scope $\{u,v\}$ is
called translation invariant if it depends only on the relative displacement
$\delta\bm x^{(n)}_{(u,v)}$ as in \cref{def:discretized}.

For each edge $e\in E$, set $U_e=\{e^-,e^+\}$
and define the edge factor $\phi_e:\mathcal X_{U_e}\to\mathds C$ by
\[
    \phi_e(\bm x_{U_e})
    =
    K_e\bigl(\delta\bm x_e^{(n)}\bigr).
\]
Likewise, set $U_{\bm k}=\{s,t\}$
and define the momentum factor
$\phi_{\bm k}:\mathcal X_{U_{\bm k}}\to\mathds C$ by
\[
    \phi_{\bm k}(\bm x_{U_{\bm k}})
    =
    e^{-2\pi i\,\delta\bm x^{(n)}_{(s,t)}\cdot\bm k}.
\]
The initial factor family of a discretized graph lattice sum is therefore
\[
    \mathcal P_1
    =
    \{\phi_e\}_{e\in E}\cup\{\phi_{\bm k}\},
\]
and the graph lattice sum admits the tensor-network representation
\[
    \mathcal Z_G^{(n)}(\bm k)
    =
    \sum_{\bm x\in\mathcal X_V,\ \bm x_p=\bm 0}
    \prod_{\phi\in\mathcal P_1}
    \phi\bigl(\bm x_{\operatorname{scope}(\phi)}\bigr).
\]
\end{definition}

The factors of the tensor network can now be collected into so-called buckets and contracted by summing over the vertex positions in a suitable elimination order. This reduces the computational complexity from exponential in the number of vertices to exponential in the width of the elimination ordering, whose minimum is related to the treewidth of the block \cite{Dechter1999}.

\begin{definition}[Buckets and elimination orderings]
\label{def:buckets}
Let $\pi=(v_1,\ldots,v_m)$, with $m=|V|-1$,
be a permutation of $V\setminus\{p\}$, called an elimination ordering.
Starting from the initial factor family $\mathcal P_1$ of
\cref{def:factors}, define recursively for $j=1,\ldots,m$ the bucket
\[
    \Phi_j
    =
    \left\{
        \phi\in\mathcal P_j:
        v_j\in\operatorname{scope}(\phi)
    \right\}
\]
as well as its remaining scope
\[
    S_j
    =
    \left(
        \bigcup_{\phi\in\Phi_j}
        \operatorname{scope}(\phi)
    \right)
    \setminus\{v_j\}.
\]
Eliminating $v_j$ replaces all factors in $\Phi_j$ by the factor
\[
    \tau_j(\bm x_{S_j})
    =
    \sum_{\bm x_{v_j}\in\Lambda_n}
    \prod_{\phi\in\Phi_j}
    \phi\bigl(\bm x_{\operatorname{scope}(\phi)}\bigr),
    \qquad
    \operatorname{scope}(\tau_j)=S_j,
\]
and yields the factor family
\[
    \mathcal P_{j+1}
    =
    \bigl(\mathcal P_j\setminus\Phi_j\bigr)
    \cup\{\tau_j\}.
\]
The width $w(\pi)$ of the elimination ordering $\pi$ is
\[
    w(\pi)
    =
    \max_{1\le j\le m}|S_j|.
\]
\end{definition}

Classical bucket elimination requires $\mathcal O(|\Lambda_n|^{w+1})$ arithmetic operations and $\mathcal O(|\Lambda_n|^{w})$ memory, with $w$ is the largest scope met during the elimination, which is bounded by the treewidth of the graph formed by the scopes. The special structure appearing in of graph lattice sums allows us to reduce the scaling exponent of memory by one order and the exponent pf work by up to two orders. The first ingredient is translational invariance of the edge kernels, which permits specific elimination steps to be carried out using FFT, removing one unit from the exponent of the related elimination step. Importantly, it also allows us to obtain the values at the whole $\bm k$ grid at the cost of a single $\bm k$ value (neglecting locarithmic scaling corrections).

\begin{lemma}[Convolutional elimination]
\label{lem:peeling}
Consider step \(j\) of the elimination in \cref{def:buckets}. Suppose that
there exists \(u\in S_j\) such that every factor in \(\Phi_j\) containing
\(u\) is a translation-invariant two-mode factor with scope
\(\{u,v_j\}\). Let
\[
    \Phi_j^{(u)}
    =
    \bigl\{
        \phi\in\Phi_j:
        u\in\operatorname{scope}(\phi)
    \bigr\},
    \qquad
    W
    =
    S_j\setminus\{u\},
\]
and write
\[
    \phi(\bm x_u,\bm x_{v_j})
    =
    K_\phi(\bm x_u-\bm x_{v_j}),
    \qquad
    K
    =
    \prod_{\phi\in\Phi_j^{(u)}}K_\phi,
    \qquad
    \psi
    =
    \prod_{\phi\in\Phi_j\setminus\Phi_j^{(u)}}\phi.
\]
Then \(\psi\) is independent of \(\bm x_u\), and the elimination of \(v_j\)
is, fibrewise in \(\bm x_W\), the cyclic convolution
\[
    \tau_j(\bm x_u,\bm x_W)
    =
    \bigl(\psi(\,\cdot\,,\bm x_W)\star K\bigr)(\bm x_u).
\]
It can therefore be evaluated using
\[
    \mathcal O\!\left(
        \bigl(|\Phi_j|+\log|\Lambda_n|\bigr)
        |\Lambda_n|^{|S_j|}
    \right)
    \quad\text{operations and}\quad
    \mathcal O\bigl(
        |\Lambda_n|^{|S_j|}
    \bigr)
    \quad\text{memory}.
\]
\end{lemma}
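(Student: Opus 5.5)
We split the product over the bucket according to whether a factor involves $u$, and then identify the remaining sum over $\bm x_{v_j}$ as a cyclic convolution on the finite group $\Lambda_n\cong\Lambda/(n\Lambda)$.

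\textbf{Factorization.} By definition of $\Phi_j^{(u)}$, every factor in $\Phi_j\setminus\Phi_j^{(u)}$ has a scope that avoids $u$. Its scope lies in $\{v_j\}\cup W$, so $\psi$ is a function of $(\bm x_{v_j},\bm x_W)$ alone and in particular does not depend on $\bm x_u$. By hypothesis, each $\phi\in\Phi_j^{(u)}$ is translation invariant with scope $\{u,v_j\}$. Reversing an orientation only replaces $K_\phi$ by $K_\phi(-\,\cdot\,)$, as in \cref{lem:merge}, so we may write $\phi=K_\phi(\bm x_u-\bm x_{v_j})$. The product over $\Phi_j^{(u)}$ is then the Hadamard product $K(\bm x_u-\bm x_{v_j})$, and
\[
\tau_j(\bm x_u,\bm x_W)=\sum_{\bm x_{v_j}\in\Lambda_n}\psi(\bm x_{v_j},\bm x_W)\,K(\bm x_u-\bm x_{v_j}).
\]

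\textbf{Identification as a cyclic convolution.} This step needs the periodic scheme of \cref{def:discretized}, where differences are reduced modulo $n\Lambda$. In that scheme $\Lambda_n$ is a complete residue system of the group $\Lambda/(n\Lambda)$, and $\bm x_{v_j}\mapsto\bm x_u-\bm x_{v_j}$ is a bijection of it. The sum above is therefore exactly the group convolution $(\psi(\cdot,\bm x_W)\star K)(\bm x_u)$ on $\Lambda/(n\Lambda)\cong(\mathds Z/n)^d$. For the box truncation, one embeds into a zero-padded torus of size $2n$ per dimension, which changes only constants. I expect this to be the main subtlety: the statement should be read in the periodic scheme, or with padding.

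\textbf{Cost.} First form $\psi$ as a tensor on $\mathcal X_{\{v_j\}\cup W}$; it has $|\Lambda_n|^{|W|+1}=|\Lambda_n|^{|S_j|}$ entries. Multiplying the at most $|\Phi_j|$ factors entrywise costs $\mathcal O(|\Phi_j|\,|\Lambda_n|^{|S_j|})$, and forming $K$ costs $\mathcal O(|\Phi_j|\,|\Lambda_n|)$. For each of the $|\Lambda_n|^{|W|}$ fibres $\bm x_W$, a $d$-dimensional FFT of length $n^d$ computes the convolution in $\mathcal O(|\Lambda_n|\log|\Lambda_n|)$. The FFT of $K$ is computed once and reused. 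Summing over fibres gives $\mathcal O(\log|\Lambda_n|\,|\Lambda_n|^{|S_j|})$. Memory is dominated by storing $\psi$ and $\tau_j$, which are both of size $|\Lambda_n|^{|S_j|}$, plus $\mathcal O(|\Lambda_n|)$ of workspace. Adding the two contributions gives the stated bounds.
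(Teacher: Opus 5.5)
Your proposal is correct and follows essentially the same route as the paper: you split the bucket into the $u$-dependent translation-invariant factors forming $K$ and the remainder $\psi$, identify the fibrewise sum as a cyclic convolution on $\Lambda/(n\Lambda)$ (the paper likewise takes differences modulo $n\Lambda$), and count $|\Lambda_n|^{|W|}$ FFTs plus the $\mathcal O(|\Phi_j|\,|\Lambda_n|^{|S_j|})$ cost of forming $\psi$ and $K$. Your zero-padding remark for the box truncation is a harmless extra; the paper does not need it, because it applies the lemma only to the torus discretization.
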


\begin{proof}
By assumption, the product of all factors in \(\Phi_j^{(u)}\) depends only
on \(\bm x_u-\bm x_{v_j}\), while no factor in
\(\Phi_j\setminus\Phi_j^{(u)}\) depends on \(\bm x_u\). Hence
\[
    \tau_j(\bm x_u,\bm x_W)
    =
    \sum_{\bm x_{v_j}\in\Lambda_n}
    \psi(\bm x_{v_j},\bm x_W)
    K(\bm x_u-\bm x_{v_j}),
\]
where differences are taken modulo \(n\Lambda\). This is a cyclic
convolution on \(\Lambda_n\) for every fixed \(\bm x_W\).

There are \(|\Lambda_n|^{|W|}\) fibres, and each convolution requires
\(\mathcal O(|\Lambda_n|\log|\Lambda_n|)\) operations using FFT.
Since \(|W|=|S_j|-1\), their total cost is
\(\mathcal O(|\Lambda_n|^{|S_j|}\log|\Lambda_n|)\). Forming \(K\) and
\(\psi\) requires
\(\mathcal O(|\Phi_j|\,|\Lambda_n|^{|S_j|})\) operations. The convolution
output and the factors needed to form it contain at most
\(|\Lambda_n|^{|S_j|}\) entries, which proves the stated bounds.
\end{proof}

The second ingredient is that translation invariance is not restricted to the
edge kernels, but is a general property of all occuring factors. Every factor generated by the elimination of \cref{def:buckets} is invariant
under a common shift of all positions in its scope by $\bm a\in \Lambda_n$. It is therefore
determined by an array with one mode fewer than its scope suggests and the full factor can be obtained through reindexing.

\begin{theorem}[Numerical cost of tensor network bucket elimination]
Consider the torus discretization for a block $G$ with $\mathrm{tw}(G)\ge 2$. Set $N=|\Lambda_n|$ and $\mathrm{tw}=\mathrm{tw}(\tilde G)$, with $\tilde G=G+(s,t)$ for $s\neq t$ and $\tilde G=G$ otherwise. Set the pin $p=s$ and choose an elimination ordering $\pi$ of $V\setminus\{p\}$ satisfying $|S_j|\le\mathrm{tw}$ for every $j$ and eliminating $t$ last when $s\ne t$. We can then compute $\mathcal Z_{G}^{(n)}(\bm k)$, either for a single momentum or on the full momentum grid $\mathrm{BZ}_n$, using $\mathcal O \big(N^{\mathrm{tw}}\big)$ operations and $\mathcal O \big(N^{\mathrm{tw}-1}\big)$ memory. If every step at maximum width is convolutional in the sense of \cref{lem:peeling}, then scaling of arithmetic cost reduces to $\mathcal O \big(N^{\mathrm{tw}-1}\log N\big)$.
\end{theorem}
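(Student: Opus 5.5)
We will prove the bound by induction along the elimination ordering, combining the translation invariance of all generated factors with the standard bucket-elimination cost count. The invariant to maintain is: every factor in $\mathcal P_j$ has scope of size at most $\mathrm{tw}$, and it is invariant under a common shift $\bm x_U\mapsto \bm x_U+\bm a$, $\bm a\in\Lambda_n$, with differences taken modulo $n\Lambda$. Such a factor is therefore determined by its restriction to configurations with one distinguished scope vertex set to $\bm 0$. This gives an array with $|U|-1$ modes of length $N$.

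\textbf{Initial factors.} The invariant holds for $\mathcal P_1$. Each $\phi_e$ depends only on $\delta\bm x^{(n)}_e$, and in the periodic scheme $\phi_{\bm k}$ depends only on $\delta\bm x^{(n)}_{(s,t)}$. Moreover, the scope of every initial factor is an edge of $\tilde G$; this is the reason for adding $(s,t)$ to $\tilde G$.

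\textbf{Inductive step.} If all factors in $\Phi_j$ are shift invariant, then their product is shift invariant on $S_j\cup\{v_j\}$. Substituting $\bm x_{v_j}\to\bm x_{v_j}+\bm a$ in the sum defining $\tau_j$ is a bijection of $\Lambda_n$, so $\tau_j$ is shift invariant on $S_j$. For the scope bound, the scopes generated by elimination are exactly the neighbourhoods in the elimination graph of $\tilde G$. Hence $|S_j|\le \mathrm{tw}$ by the assumed ordering. Such an ordering exists because the minimum elimination width equals the treewidth. One detail needs checking: since $s$ is pinned and $t$ is eliminated last, the ordering can be taken from a tree decomposition rooted at a bag containing $\{s,t\}$.

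\textbf{Cost count.} Computing $\tau_j$ requires only its reduced array, which has $N^{|S_j|-1}$ entries. We fix one vertex $u\in S_j$ at $\bm 0$ and range over the remaining $|S_j|-1$ scope vertices. For each entry, we sum over the $N$ values of $\bm x_{v_j}$. Each product factor is then read from its reduced array by reindexing relative differences, at $\mathcal O(|\Phi_j|)$ cost per term. The total work is $\mathcal O(|\Phi_j| N^{|S_j|})\le \mathcal O(N^{\mathrm{tw}})$. Memory is dominated by the stored reduced arrays, $\mathcal O(N^{\mathrm{tw}-1})$. There are $|V|-1$ steps and $\sum_j|\Phi_j|\le|E|+1$, and these graph-dependent constants are absorbed into the bound.

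\textbf{Full momentum grid.} Eliminate all vertices except $t$ without the momentum factor. With $\bm x_s=\bm 0$, this leaves a one-mode factor $\tau(\bm x_t)$. Then $\mathcal Z^{(n)}_G(\bm k)=\sum_{\bm x_t}e^{-2\pi i\bm x_t\cdot\bm k}\tau(\bm x_t)$, and a single FFT gives all $\bm k\in\mathrm{BZ}_n$ at $\mathcal O(N\log N)$ extra cost. This is dominated since $\mathrm{tw}\ge2$.

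\textbf{Convolutional refinement.} At maximum-width steps we apply \cref{lem:peeling}. The full fibre count there is $N^{|S_j|-1}$ convolutions, but by shift invariance only the fibres with one vertex of $W$ pinned are needed. That leaves $N^{|S_j|-2}$ convolutions, each costing $N\log N$, for a total of $\mathcal O(N^{\mathrm{tw}-1}\log N)$. Non-maximal steps cost at most $N^{\mathrm{tw}-1}$ by the previous count.

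\textbf{Main obstacle.} I expect the delicate part to be the combination of \cref{lem:peeling} with the shift reduction. The convolution variable $u$ and the pinned vertex must be distinct, which requires $|W|\ge1$. When $|S_j|=1$, the step costs only $\mathcal O(N\log N)$ anyway. The other point that needs care is the choice of ordering that keeps $t$ last and $s$ pinned without increasing the width beyond $\mathrm{tw}(\tilde G)$.
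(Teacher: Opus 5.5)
Your proposal is correct and follows essentially the same route as the paper. Both arguments take the ordering from a tree decomposition of $\tilde G$ rooted at a bag containing $\{s,t\}$ and use shift invariance of every generated factor, so that each $\tau_j$ needs only $N^{|S_j|-1}$ sums of length $N$. In both, the momentum factor enters only the final $t$-bucket, where a single FFT gives all of $\mathrm{BZ}_n$, and a maximal-width convolutional step reduces to $N^{|S_j|-2}$ fibre convolutions. The only difference is presentational: you carry shift invariance and the scope bound as an explicit inductive invariant, whereas the paper first states the naive $\mathcal O(N^{\mathrm{tw}+1})$ count and then removes one power via translation invariance.
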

\begin{proof}
    Choose a tree decomposition of $\tilde G$ of width $\mathrm{tw}$. Then, there exists at least one bag including both $s$ and $t$, since they either share an edge or are equal. Define such a node as the root. We then eliminate from the leaves towards the root, removing one by one vertices in leaves that are not included in the parent nodes, then removing the leaf once this procedure stalls. Note by \cref{def:tree_decomp}\,(3) that such vertices can not be present in any other bag. When only the root remains, eliminate its remaining vertices, leaving $s$ pinned and, when $s\ne t$, eliminating $t$ last. The largest scope appearing is thus $|S_j|\le \mathrm{tw}$, with associated work scaling as $\mathcal O\big(N^{\mathrm{tw}+1}\big)$ and memory scaling as $\mathcal O\big(N^{\mathrm{tw}}\big)$.

    Translational invariance reduces the exponents by one unit. At the start, all factors are invariant under addition of $\bm a \in \Lambda_n$ to all vertex positions, as they only depend on relative displacements modulo $n\Lambda$ and this property is preserved under elimination. Each factor with nonempty scope $U$ is determined by $N^{|U|-1}$ values with one position fixed to zero. Likewise, evaluating $\tau_j$ with one position in $S_j$ fixed to zero requires $N^{|S_j|-1}$ sums of length $N$. Since $G$ is fixed, this gives $\mathcal O(N^{\mathrm{tw}})$ work and $\mathcal O(N^{\mathrm{tw}-1})$ memory. For a convolutional step with $|S_j|\ge2$, fixing one position in $S_j\setminus\{u\}$ as in \cref{lem:peeling} leaves $N^{|S_j|-2}$ convolutions of length $N$. Thus, if every step at maximum width is convolutional, the total work reduces to $\mathcal O(N^{\mathrm{tw}-1}\log N)$. Steps with $|S_j|=1$ cost $\mathcal O(N)$ directly. Accumulating sums and processing convolutions one at a time preserves the memory bound.

    If $s=t$, the result is independent of $\bm k$. Otherwise, the momentum factor enters only the last bucket, so all preceding steps are independent of $\bm k$. The product of the other remaining factors at $\bm x_s=\bm 0$ defines $\psi$, and the final contraction is
    \[
    \mathcal Z_{G}^{(n)}(\bm k)=\sum_{\bm x_t \in \Lambda_n} \psi(\bm x_t) e^{-2\pi i \bm x_t\cdot \bm k},
    \]
    which can be evaluated via FFT at $N\log N$ cost, compared to cost $N$ for a fixed momentum evaluation. These costs fit the stated bounds as $\mathrm{tw}\ge2$.
\end{proof}
For state-of-the-art corpora of the LRTFIM (order $r=13$ for 0qp, order $r=11$ for 1qp), the optimal bound is obtained for more than $99\,\%$ of graphs. Importantly, the highest augmented treewidth obtained equals $\mathrm{tw}=4$ and all of these cases obey the optimal scaling. Thus, scaling of work has been reduced from $\mathcal O\big(N^{|V|-1}\big)$ to  $\mathcal O(N^3\log N)$ even for the most difficult graphs in the corpora. In addition, the above theorem show that the evaluation of the whole Brillouin zone grid is available at the same cost than a single momentum evaluation.

Each method in the preceding section forms a natural generalization of its predecessors and consumes them them as a internal subroutines. After block-factorization, we have first identified analytic building blocks, formed by bridges and cycles. Through series-parallel reduction, this foundation can be used to create all series-parallel blocks, where the leading algebraic tails of the arising edge kernels can be determined at log-linear cost. Finally, as soon as series-parallel reduction stalls, then the tensor network bucket elimination is used. It is important to note that the reduction step should not be skipped, as otherwise, the resulting tensor contractions converge significantly more slowly. After reduction, all non-terminal nodes are at least 3-connected, and a small lattice grid can be used to resolve the blocks to equivalent precision as their series-parallel counterparts.

Using this hierarchy of methods, the full corpora of the LRFTIM can be computed for the full $\bm k$ grid within minutes on a standard laptop at better precision than leading Monte Carlo approaches, which requires 24-hours on a cluster for a single $\bm k$ value \cite{adelhardt2024monte}. The framework is further applicable to any gapped quantum lattice systems, famous examples being the XY or Heisenberg model. We provide a detailed account of the runtime and accuracy of our method in the next section. 

\section{Numerical Results}
\label{sec:benchmarks}

In this section, we benchmark in detail the accuracy and runtime of our method and their scaling with the numerical discretization parameters, the most important one being the size of the discretized lattice $|\Lambda_n|=n^d=N$. As the arising lattice sums are both high-dimensional and, for small exponents, slowly converging, computing reliable benchmark values for single graphs, as well as for realistic 0qp and 1qp corpora, is a challening task. We begin our analysis by analytically available examples for single graphs, focussing on the hard case of power-law edge kernels $\mathcal K_\nu$ with exponents close to the system dimension. We further benchmark more complicated graphs against direct summation at larger exponents, where the resulting sums can still be computed within reasonable timeframes. In order to reduce the computational complexity, while keeping results bit-identical to iterated summation, we apply tensor network bucket elimination using a box truncation combined with Richardson extrapolation. We further provide a detailed analysis of the scaling of the runtime of our method with grid size. In particular, we demonstrate exponential reduction in complexity for series-parallel blocks compared to direct summation, obtaining linear scaling in $|V|$ instead. For dense cores with $\mathrm{tw}(G)\ge 3$, we show that scaling of numerical work never exceeds $|\Lambda_n|^3$ for the whole LRTFIM corpus, where we can further reduce the number of points per dimension $n$ compared to series parallel blocks, as the resulting sums are rapidly convergent after series-parallel reduction. For the realistic case of whole corpora, we focus on the most relevant case of LRTFIM, and compare our results to published Monte Carlo values, which we can reproduce completely within minutes on standard desktop hardware, compared to typical runtimes of 24 hours on a cluster for a Monte Carlo run. 

\subsection{Analytic and numerical benchmarks for series-parallel graphs}

We begin our analysis with the most often occuring and, under direct discretization, most slowly convergent blocks. Here, we first note that analytic blocks, bridges and simple cycles, can be computed to full precision using the method for Epstein zeta functions \cite{buchheit2024epstein} and the method for many-body zeta functions in \cite{buchheit2025epstein_method} at linear scaling with the number of nodes. The remaining slowly convergent sums are then series parallel blocks evaluated through the semi-analytic zeta algebra.

We first use the circle zeta function for a cycle with $L$ nodes and all edge exponents equal to $\nu$ as a benchmark for the series-parallel algebra. For a cycle with $L$ nodes, and terminals associated with two neighboring nodes, the graph zeta function reads
\[
\zeta_G(\bm k) = \Big(\prod_{j=1}^n Z_{\Lambda,\nu}\Big)\ast Z_{\Lambda,\nu}(\bm k).
\]

\begin{figure}
    \centering
    \includegraphics[width=.8\textwidth]{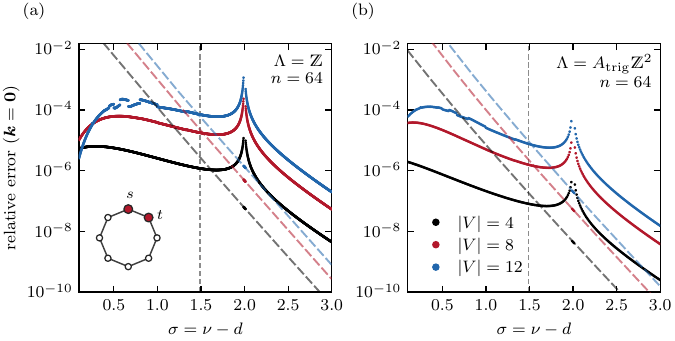}
    \caption{Relative error of the zeta algebra for a cycle with $|V|=4,8,12$ nodes, neighboring terminals, and power-law edge kernels $\mathcal K_\nu$ with equal exponents $\nu=d+\sigma$ at $\bm k=\bm 0$, compared with the analytic benchmark value at $\bm k=\bm 0$. The dashed lines display the value for $\sigma_\mathrm{max}=0$, where only a truncated Fourier series is used. Panel (a) shows the integer lattice $\Lambda$, while (b) displays the results for the trigonal lattice with $n=64$ points per dimension, respectively. The dashed line displays the conservatively chosen cutoff value in $\sigma$, after which $\sigma_\mathrm{max}=0$ is applied.}
    \label{fig:zeta_algebra_accuracy}
\end{figure}

For small values of $\nu$, the associated sum is slowly convergent and direct evaluation approaches scale exponentially with $|V|$. As an analytical benchmark, we use that the value of this graph zeta function at $\bm k=\bm 0$ equals the many-body zeta function from \cite{buchheit2026zeta}, which can be computed at linear cost in $|V|$ to full precision even for $\nu\to d$. In \cref{fig:zeta_algebra_accuracy} we display the relative error of the zeta algebra, measured against the exact reference, at $\bm k=\bm 0$ for (a) the integer lattice $\Lambda=\mathds Z$ and (b) the trigonal lattice $\Lambda=A_\mathrm{trig}\mathds Z^2$,
with 
\[
A_\mathrm{trig}=\begin{pmatrix} 1 & 1/2 \\[2pt] 0 & \sqrt3/2\end{pmatrix},
\]
as a function of the reduced exponent $\sigma=\nu-d$. In both cases, a momentum grid with $n=64$ points per dimension was used. The black dots show the case $|V|=4$, the red dots correspond to $|V|=8$ and the blue dots show $|V|=12$, corresponding to $(|V|-1)d$-dimensional sums each. The solid lines show the zeta algebra for $\sigma_{\mathrm{max}}=4$, where Epstein zeta functions with exponents $\nu<d+\sigma_\mathrm{max}$ are retained, compared with the case $\sigma_{\mathrm{max}}=0$, where only a truncated Fourier series is used.
A qualitatively similar error behavior occurs for both lattice cases. In the regime $0<\sigma<3/2$, the algebra provides a stable error in $\sigma$, while the error of a simple Fourier approach degrades as $\nu\to d$. For $\sigma>3/2$ retaining the singularities improve the results compared to the Fourier approach, as the sums are already quickly convergent, while the gamma pole of the Fourier transform of $|\bm \cdot |^{-\nu}$ at $\nu=d+2$ leads to a slight loss of precision. For that reason, we switch to the Fourier discretization for $\sigma>3/2$, keeping the stable evaluation for small interaction exponents while avoiding cancellation error around gamma poles. Importantly, only a slight loss in precision is observed as $L$ increases, providing us with access to graphs with very high numbers of nodes.

\begin{figure}
    \centering 
    \includegraphics[width=.8\textwidth]{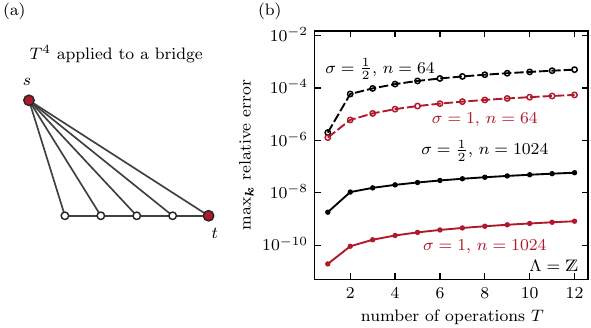}
    \caption{(a) Graph obtained from a single bridge by applying the operation $T$, consisting of a multiplication followed by a convolution, four times. (b) Maximum relative error of the associated graph zeta function $\zeta_G(k)$ for $\Lambda=\mathds Z$ on the grid $k\in \mathrm{BZ}_{64}$ for $\sigma=1/2$ (black) and $\sigma=1$ (red). Open points correspond to the zeta algebra with $n=64$ while solid points correspond to the finer discretization $n=1024$. The reference is the Richardson-extrapolated truncated sum over the box $[-L,L]$ with $L=10\,000,12\,000,\dots,22\,000$, evaluated using bucket elimination at the 64 momentum points. The error increases only linearly with the number of operations.}
    \label{fig:Titer}
\end{figure}

We proceed with a more complicated series-parallel graph. Defining the operation $T$ on a graph zeta function via 
\[
(T \zeta_G)=(\zeta_G Z_{\Lambda,\nu})\ast Z_{\Lambda\nu}(\bm k),
\]
we consider the graph $G$ obtained by iterated applications $T$ to the bridge $Z_{\Lambda,\nu}$, corresponding to a serial and a parallel composition each. The resulting graph after 4 operations is displayed in \cref{fig:Titer}(a). We now measure the maximum relative error on the integer lattice $\Lambda=\mathds Z$ over the whole momentum grid $\mathrm{BZ_n}$ as a function of the number of operations $T$ in panel (b). The open dots correspond to $n=64$, while the filled dots show the finer discretization $n=1024$. Meanwhile, black points correspond to $\sigma=1/2$, while red dots denote $\sigma = 1$. A reliable reference on the whole grid is obtained through tensor network bucket elimination using the box truncation from \cref{def:discretized}, which is bit-identical to direct summation over the truncated lattice $\{-L,\dots,L\}$, yet has a scaling exponent that only depends on the treewidth of the graph, and not on the number of nodes. We further apply a ladder $L=10\,000,12\,000,\dots,22\,000$ and use Richardson extrapolation to further reduce truncation error, leading to a reliable reference value. We find that the error increases only weakly linearly with the number of operations. A slightly larger increase of error between operations number $1$ and $2$ compared to the remaining data points is observed. We find that the method reproduces the complete momentum grid precisely with controlled error increase even for high summation dimensions. Note that the largest value $L=12$ corresponds to a $22$-dimensional slowly convergent sum. Importantly, the algebra precisely resolves the singularities in the momentum $k$ even after 12 consecutive multiplications and singular convolutions.

\begin{figure}
    \centering 
    \includegraphics[width=.8\textwidth]{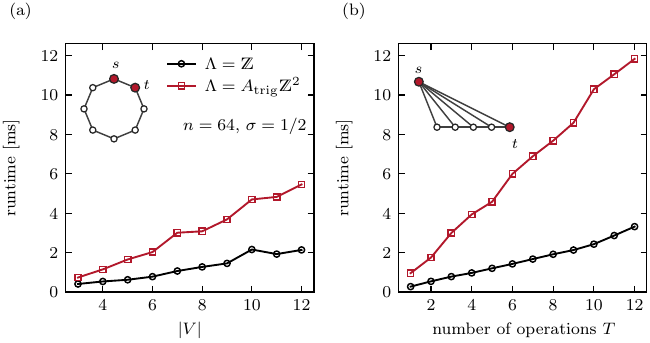}
    \caption{Runtime in milliseconds for evaluating graph zeta functions $\zeta_G$ on a complete $n^d$ momentum grid for $n=64$ and $\sigma=1/2$ on a single core Apple M1 Max core. Panel (a) shows the runtime for a cycle with neighboring terminals as a function of the number of nodes $|V|$, while panel (b) shows the runtime for the graph obtained from repeated applications of $T$, as a function of the operation count. The black dots show $\Lambda=\mathds Z$, while the red squares display the results for the trigonal lattice. The runtime increases only linearly, compared to exponential increase for direct summation.}
    \label{fig:runtime}
\end{figure}

We then proceed with an analysis of the scaling of runtime with the number of nodes in the graph. Using the zeta algebra in \cref{sec:algebra}, we predict that the scaling should reduce from $N^{|V|-1}$ to $|V|N\log N$ for all series-parallel graphs. This is confirmed numerically for all series-parallel blocks in the LRTFIM. We here analyze two already introduced graphs, namely the circle graph with neighboring terminals from \cref{fig:zeta_algebra_accuracy} and the graph obtained by repeated applications of $T$ from \cref{fig:Titer}. In \cref{fig:runtime}\, we display the single core runtime on an Apple M1 Max processor for computing the graphs on an $n^d$ momentum grid with $n=64$. Black dots correspond to the integer lattice, while red squares denote the 2D hexagonal lattice. Panel (a) displays the runtime of the circle graph as a function of the number of nodes $|V|$, whereas panel (b) shows the runtime for computing the assembled graph as a function of number of applied operations $T$. In both cases, runtime increases only linearly, confirming the theoretical prediction. The larger runtime in (b) is due to more complicated operations being applied, consisting of both a multiplication and a convolution within the algebra. 

\begin{figure}
    \centering 
    \includegraphics[width=.8\textwidth]{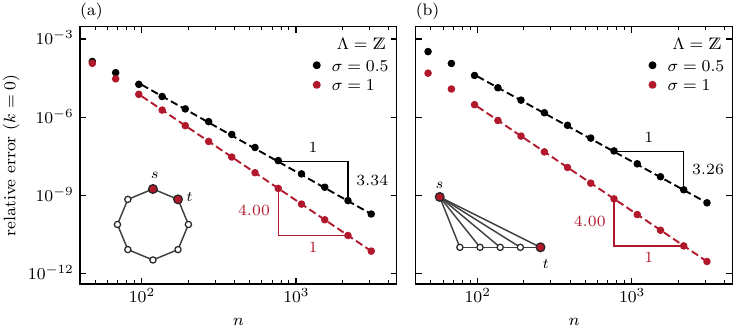}
    \caption{Relative error of the graph zeta function with edge exponent $\nu=d+\sigma$ at $\bm k=\bm 0$ obtained from the zeta algebra for $\Lambda=\mathds Z$ as a function of the number of discretization points $n$ for (a) the cycle graph with $|V|=8$ nodes and (b) the graph obtained by applying $T^4$ to a bridge. The black dots show $\sigma=0.5$, while the red dots display $\sigma = 1$. The function is evaluated at $n=\lfloor 48\cdot 2^{j/2}\rceil$ with $\lfloor \cdot \rceil$ the nearest integer and $j=0,1,\dots,12$. The dashed lines show a linear fit against the logarithmized data, with resulting fit exponents shown as captions to the triangle. The scaling is in good agreement with the theoretical bound $\vert n\vert^{-(d+\sigma+2)}$ from \cref{rem:accuracy}.}
    \label{fig:sp_algebra_error_n}
\end{figure}

We finally investigate the dependency of the relative error of the semi-analytic zeta algebra on the number of discretization points per dimension $n$ in \cref{fig:sp_algebra_error_n} for the cycle with $|V|=8$ nodes in (a) and the graph obtained from applying $T^4$ to a bridge in (b). We here choose $\Lambda=\mathds Z$ and zero momentum and provide the results for $\sigma=0.5$ (black) and $\sigma =1$ (red). The reference value in (a) is obtained from the the closed-form circle zeta function, while a tensor network bucket elimination with box truncation on $[-L,L]$ accompanied by Richardson extrapolation on the ladder with half-widths $L=20\,000,24\,000,\dots,36\,000$ was used for panel (b). From a linear fit of the logarithmized data, we find that the error scales approximately as the power-law
\[
E_\mathrm{rel}(n) \sim |n|^{-(d+\sigma+2)},
\]
with fitted exponents provided in the plot. This error scaling is consistent with the theoretical prediction in \cref{rem:accuracy}.

\subsection{Benchmarks for $\mathrm{tw}\ge 3$ graphs evaluated through the tensor-network}

\begin{figure}
    \centering 
    \includegraphics[width=.8\textwidth]{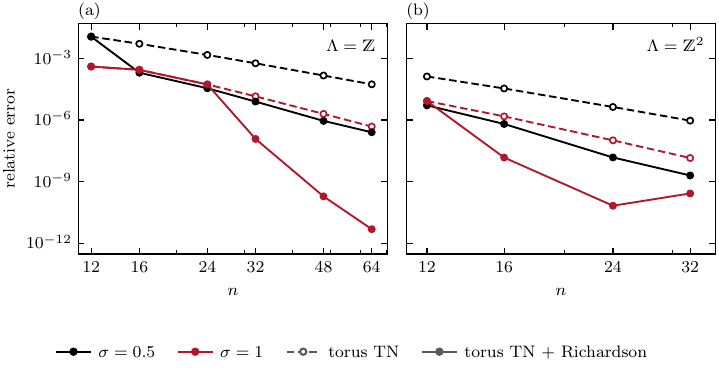}
    \caption{Relative error of tensor network bucket elimination, using torus truncation, for the $K_4$ graph and (a) $\Lambda=\mathds Z$ and (b) $\Lambda=\mathds Z^2$ as a function of the number of grid points per dimension $n$ for $\sigma=1/2$ (black) and $\sigma=1$ (red). The open points (dashed lines) show the bare torus tensor network values, while the solid points display the optimized value using a Richardson ladder.}
    \label{fig:tensor_torus}
\end{figure}

Testing the accuracy of the tensor network for highly connected graphs is a non-trivial task, as the arising sums are high-dimensional objects with only few analytic references available. A comparison with truncated direct summation on a box $A\{-L,\dots L\}^d$ is desirable, yet the rise in dimension restricts its use to graphs with low node counts $|V|$, low spatial dimensions $d$, and small box size $L$. The hurdle of establishing a reliable reference is overcome by a two stage strategy. We first verify that direct nested summation yields exactly the same results as a tensor network with box-truncation. We then establish the box truncation combined with Richardson extrapolation for large $L$ as our reference, as already done in the previous subsection. This is then compared against the torus truncation, used in \cref{sec:tensor}, which permits the reduction of complexity through use of FFTs and provides access to the full momentum grid at the cost of a single momentum evaluation. 

Following this strategy, we have verified that the tensor-network with box-truncation coincides with an explicit nested lattice sum sharing no
code with it, for the graphs $K_4$ (4 nodes, all connected with edges having equal exponent) and $K_5$ (5 connected nodes) on $\mathds{Z}$ and $\mathds{Z}^2$ at $\sigma=0.5$, $1$ and $2$, over box half-widths up to $L=28$, the largest covering $2.1\times10^{8}$ explicit terms.
The largest relative deviation found is $5.3\times10^{-15}$.

In \cref{fig:tensor_torus}, we benchmark the error of the tensor network bucket elimination for the graph $K_4$, using torus truncation, against a large $L$ tensor network with box truncation, equal to machine precision to a truncated sum, including Richardson extrapolation. Panel (a) displays the case of the integer lattice $\Lambda=\mathds Z$, whereas (b)shows the square lattice $\Lambda=\mathds Z^2$. Black points correspond to $\sigma=1/2$, whereas red points denote $\sigma=1$. Open points display the bare torus tensor network value, whereas the solid points include an additional Richardson ladder. Note that deviations in the error reduction due to the extrapolation due to the particular choice of rungs. An analytic treatment of the leading order truncation tails as in the zeta algebra in \cref{sec:algebra} would remove the need for extrapolation. This is, however, nontrivial due to the more complicated singularity structure and will therefore be the topic of future work.

\subsection{Comparison against published Monte Carlo data}

\begin{table}[t]
\centering
\setlength{\tabcolsep}{2.4pt}
\scriptsize
\providecommand{\an}[1]{\textcolor{black!45}{#1}}
\begin{tabular}{@{}llcrrrrrrrrrrrrrc@{}}
\toprule
& & & \multicolumn{13}{c}{order $r$} & \\
\cmidrule(lr){4-16}
Series & $\sigma$-range & Ref. & $1^{\mathrm{A}}$ & $2^{\mathrm{A}}$ & $3^{\mathrm{A}}$ & 4 & 5 & 6 & 7 & 8 & 9 & 10 & 11 & 12 & 13 & $\max_r$ \\
\midrule
\multicolumn{17}{@{}l}{$\Lambda=\mathds{Z}$}\\[1pt]
0qp & $0.5$--$3$ (4) & \cite{Langheld2022} & --- & --- & \an{3.5} & 1.4 & 0.4 & 1.5 & 1.8 & 1.1 & 1.5 & 1.3 & 1.2 & 1.4 & 1.5 & \textbf{1.8} \\
1qp, $k=0$ & $\tfrac23$--$9$ (13) & \cite{Fey2020Diss} & --- & \an{0.0} & \an{0.5} & 0.3 & 0.4 & 0.3 & 0.2 & 0.2 & 0.2 & --- & --- & --- & --- & \textbf{0.4} \\
1qp, $k=0$ & $0.5$--$3$ (4) & \cite{Langheld2022} & --- & --- & \an{2.5} & 1.8 & 0.6 & 1.2 & 2.0 & 1.5 & 1.3 & 0.9 & 1.8 & --- & --- & \textbf{2.0} \\
1qp, $k=1/2$ & $0.5$--$6$ (9) & \cite{Fey2020Diss} & --- & --- & \an{0.3} & 0.4 & 0.7 & 0.3 & 0.3 & 0.2 & 0.2 & --- & --- & --- & --- & \textbf{0.7} \\
1qp, $k=1/2$ & $0.5$--$4$ (8) & \cite{adelhardt2024monte} & --- & \an{2.4} & \an{1.9} & 2.3 & 1.6 & 2.1 & 1.3 & 2.0 & 1.8 & 1.4 & --- & --- & --- & \textbf{2.3} \\
\midrule
\multicolumn{17}{@{}l}{$\Lambda=\mathds{Z}^2$}\\[1pt]
0qp & $0.5$--$8$ (10) & \cite{Fey2020Diss} & --- & \an{0.2} & \an{0.2} & 0.3 & 0.3 & 0.3 & 0.2 & 0.4 & 0.2 & \an{30.9} & --- & --- & --- & \textbf{0.4} \\
0qp & $0.5$--$8$ (6) & \cite{AdelhardtPrivComm} & --- & \an{3.5} & \an{1.4} & 2.0 & 1.6 & 2.7 & 2.1 & 2.1 & 1.9 & 0.7 & 0.9 & 2.1 & 1.0 & \textbf{2.7} \\
1qp, $\bm k=\bm 0$ & $0.75$--$8$ (11) & \cite{fey2019quantum} & --- & \an{0.3} & \an{0.4} & 0.2 & 0.7 & 0.2 & 0.2 & 0.3 & 0.2 & --- & --- & --- & --- & \textbf{0.7} \\
1qp, $\bm k=\bm 0$ & $0.5$--$8$ (5) & \cite{AdelhardtPrivComm} & --- & \an{1.5} & \an{0.9} & 0.9 & 1.1 & 2.6 & 1.2 & 1.8 & 2.1 & 3.1 & 1.8 & --- & --- & \textbf{3.1} \\
1qp, $(1/2,1/2)^T$ & $1$--$8$ (7) & \cite{fey2019quantum} & --- & \an{0.1} & \an{0.2} & 0.3 & 0.4 & 0.2 & 0.2 & 0.2 & 0.1 & --- & --- & --- & --- & \textbf{0.4} \\
\midrule
\multicolumn{17}{@{}l}{$\Lambda=A_{\mathrm{trig}}\mathds{Z}^2$}\\[1pt]
0qp & $0.5$--$8$ (8) & \cite{Fey2020Diss} & --- & \an{0.2} & \an{0.2} & 0.2 & 0.2 & 0.2 & 0.2 & 0.3 & 0.1 & \an{22.8} & --- & --- & --- & \textbf{0.3} \\
1qp, $\bm\Gamma$ & $0.5$--$6$ (8) & \cite{fey2019quantum} & \an{0.2} & \an{0.2} & \an{0.9} & 1.2 & 0.4 & 0.2 & 0.5 & 0.2 & 0.2 & --- & --- & --- & --- & \textbf{1.2} \\
1qp, $\bm K$ & $1.5$--$6$ (5) & \cite{fey2019quantum} & \an{0.3} & \an{0.1} & \an{0.2} & 0.1 & 0.1 & 0.2 & 0.3 & 0.1 & 0.2 & --- & --- & --- & --- & \textbf{0.3} \\
\midrule
\multicolumn{17}{@{}l}{$\Lambda=\mathds{Z}^3$}\\[1pt]
1qp, $\bm k=\bm 0$ & $0.5$--$7$ (8) & \cite{Fey2020Diss} & \an{0.4} & \an{0.4} & \an{0.6} & 0.5 & 0.5 & 0.2 & 0.2 & 0.5 & 0.2 & --- & --- & --- & --- & \textbf{0.5} \\
1qp, $(1/2,1/2,1/2)^T$ & $0.5$--$7$ (8) & \cite{Fey2020Diss} & \an{0.2} & \an{0.2} & \an{0.2} & 0.2 & 0.2 & 0.1 & 0.1 & 0.2 & 0.2 & 0.3 & --- & --- & --- & \textbf{0.3} \\
\bottomrule
\vspace*{.1cm}
\end{tabular}
\caption{
Maximum absolute error of the graph-zeta series coefficients for the 0qp and 1qp LRTFIM corpus
series against published Monte Carlo values,
$\max_{\sigma}\,|c_r-c_r^{\mathrm{MC}}|/\sigma_{\mathrm{MC}}$, per order $r$,
over all exponents $\sigma$ available for the respective series. The second column gives their
range and number. Here,
$r^{\mathrm{A}}$ denotes analytic orders, at which every graph of the corpus reduces to a closed form
and the graph-zeta value is exact, measuring thus only the Monte Carlo error. Grey values are
excluded from $\max_r$. These include the analytic orders, and the top order of the $0$-quasiparticle tables of
\cite{Fey2020Diss}, which are known to be underconverged. Over the $753$ remaining
coefficients the deviation never exceeds $3.1\,\sigma_{\mathrm{MC}}$, with median
$0.12\,\sigma_{\mathrm{MC}}$ and $90\%$ within $1\,\sigma_{\mathrm{MC}}$. The graph-zeta values were
obtained using $n=2048$ grid points per dimension for $d=1$, $n=48$ for $d=2$, and $n=10$ for $d=3$. The complete collection of graph-zeta values needed for the comparison were obtained with GZL within $5$ minutes on 8 cores of an Apple M1 Max processor. In comparison, a cluster Monte Carlo run requires approximately $10^4$--$10^5$ core-hours for a single series evaluation.}
\label{tab:mc_comparison}
\end{table}

\begin{figure}
    \centering
    \includegraphics[width=0.8\textwidth]{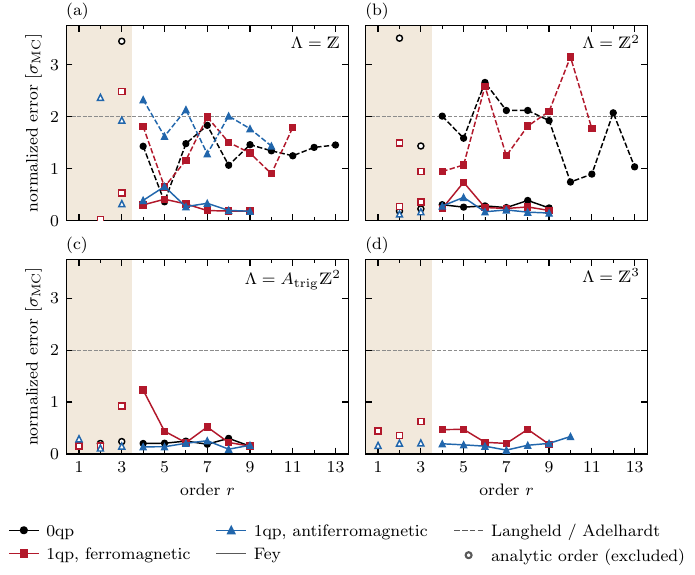}
    \caption{Absolute error of the graph zeta series coefficient $c_r$ at order $r$ against published Monte Carlo results for the LRTFIM as in \cref{tab:mc_comparison} in units of the Monte Carlo standard deviation $\sigma_\mathrm{MC}$. Black points display 0qp, red dots ferromagnetic 1qp, and blue dots  antiferromagnetic 1qp for (a) $\Lambda=\mathds Z$, (b) $\Lambda=\mathds Z^2$, (c) the trigonal lattice, and (d) $\Lambda=\mathds Z^3$. The shaded region on the left highlights orders $r\le 3$, for which all graphs admit analytic closed forms. For these cases, the points measure exclusively the error of the Monte Carlo summation.}
    \label{fig:mc_comparison}
\end{figure}

We finally provide an extensive comparison of our method against state-of-the-art published Monte Carlo results for high-order series expansions of the well-known long-range transverse-field Ising model (LRTFIM) for the ground state energy density $\varepsilon (\lambda)$  (0qp) and the dispersion relation $\omega(\lambda,\bm k)$ (1qp). In total, we compare $114$ perturbative series including $998$ series coefficients. Geometries include chains ($\Lambda=\mathds Z$), square and trigonal lattices, as well as the three-dimensional cubic lattice $\Lambda=\mathds Z^3$, and large ranges of the reduced interaction exponent $\sigma$ are explored, including values close to the lattice dimension. Different wavevectors $\bm k$ at the minimum of the ferromagnetic dispersion relation ($\lambda>0$) and the minimum of the antiferromagnetic dispersion relation ($\lambda<0$) are explored.

The Monte Carlo data has been published in the following sources. The square and triangular lattice 1qp gap series coefficients can be found in the Supplemental Material of Ref.~\cite{fey2019quantum} (Tables~I--IV), contributing $31$ series and $261$ coefficients. Further, $56$ series ($463$ coefficients) are taken from Appendix~F of
Ref.~\cite{Fey2020Diss}, including the chain gap series at $k=0$ and $k=1/2$ (Tables~F.2
and~F.3), the square and triangular lattice 0qp series (Tables~F.4 and~F.7), and
the cubic lattice 1qp series at momenta $\bm k=\bm 0$ and $\bm k=(1/2,1/2,1/2)^T$ (Tables~F.10
and~F.11). Note that the same appendix reprints the two-dimensional 1qp series of
Ref.~\cite{fey2019quantum} (Tables~F.5, F.6, F.8 and~F.9). The high-order chain 0qp
series to order 13 and the ferromagnetic chain 1qp series to order 11 stem from
Ref.~\cite{Langheld2022}, with data published on Zenodo \cite{Langheld2022Data}, contributing $8$ series (80 coefficients). We are grateful to P.~Adelhardt for providing us with an additional $19$ series ($194$ coefficients) for the chain at wavevector at the minimum of the antiferromagnetic dispersion relation and the square lattice up to order 13 for 0qp and order~11 for 1qp \cite{AdelhardtPrivComm}. Derived quantities from this data have been published in \cite{adelhardt2024monte,PhysRevB.102.174424}.

The results of the comparison are presented in \cref{tab:mc_comparison}. Here, the graph-zeta values were obtained using $n=2048$ momentum grid points per dimension for $d=1$, $n=48$ points for $d=2$, and $n=10$ points for $d=3$. The left column shows the computed series, grouped by the underlying lattice, including 0qp/1qp sectors and different momenta. The next column details the range of $\sigma$-values used, giving the total count in brackets, followed by the reference to the data source. After that, we provide the maximum error of the series coefficient at order $r$ (reaching order 13 for 0qp and order 11 in case of 1qp), obtained with GZL, at given momentum $\bm k$ in units of the Monte Carlo standard deviation $\sigma_{\mathrm{MC}}$, defined as
\[
E_r=\max_{\sigma}\Big\vert c_r(\bm k)-c_r^{\mathrm{MC}}(\bm k)\Big \vert/\sigma_{\mathrm{MC}}.
\]
Note here that the low orders $r=1,2,3$ consist entirely of graphs that admit analytic forms (either bridges or cycles without momenta) from \cref{sec:basic_blocks}, which evaluate to machine precision. Therefore, these columns measure exclusively the error of Monte Carlo summation, which is marked in the table by an index $r^
\mathrm{A}$ and by greying out the respective error values. Note that we have included only $\sigma$ values with $\sigma\ge 1/2$ in these measurements. Monte Carlo points for $\sigma = 0.1$ exist, yet here, Monte Carlo does not provide faithful results anymore due to extremely slow convergence of the sums. This is signalled by large deviations already in the analytic orders. Further, the top order for the square and trigonal lattices from \cite{Fey2020Diss} are excluded from our comparison (values greyed out), as they are suspected to be defective. This is based on two points. First, for the square lattice, the two available Monte Carlo runs of that series agree
to within $0.3\,\sigma_{\mathrm{MC}}$ at order 9, yet differ from one another by $25\,\sigma_{\mathrm{MC}}$ at order 10. Second, the given uncertainties in \cite{Fey2020Diss} degrade by a factor of $75$ between order 9 and order 10, providing a strong indication for an underconverged result of the cluster run. Meanwhile, the graph zeta results were converged and did not change under further increase of the discretization parameters.

We now provide a combined error analysis. Excluding the greyed-out cases (analytic rows and the $17$ coefficients of the two underconverged reference top orders), we obtain a maximum error of $3.1\,\sigma_\mathrm{MC}$, with median $0.12\,\mathrm{\sigma_\mathrm{MC}}$. Including all data, $89\,\%$ of the coefficients are reproduced within $1\,\sigma_{\mathrm{MC}}$ and $97\,\%$ within $2\,\sigma_{\mathrm{MC}}$. We therefore conclude that, within the statistical uncertainties of the Monte Carlo reference data, the
graph-zeta values reproduce the Monte Carlo series.

In \cref{fig:mc_comparison}, we provide a plot of the contents of \cref{tab:mc_comparison}, displaying the normalized error $E_r$ as a function of the order $r$ for (a) the integer lattice, (b) the square lattice, (c) the trigonal lattice, and (d) the cubic lattice. Here, the black dots correspond to 0qp, the red dots to ferromagnetic 1qp, and the blue dots to antiferromagnetic 1qp. Meanwhile, solid lines and points refer to the Fey thesis as the data source, whereas the dashed line corresponds to data from Langheld and Adelhardt. The dashed horizontal line shows the $2\,\sigma_{\mathrm{MC}}$ boundary, whereas the shaded region on the left displays the analytic orders, where graph-zeta is analytic and only the error of the Monte Carlo reference is measured. The two underconverged top orders of the chain and trigonal lattice ground state energy density are excluded from the plot, as discussed before. 

We close this session with a short discussion on runtime needed to reproduce the Monte Carlo reference data. A Monte Carlo run underlying Refs.~\cite{fey2019quantum,Fey2020Diss,
Langheld2022,adelhardt2024monte} is of cluster scale, with a single series at a single momentum
requiring about $24\,\mathrm{h}$ on $72$ cores for approximately $20$ seeds, resulting in roughly
$3\times10^4$ core-hours. Meanwhile, all $114$ series (998 coefficients) in this comparison were computed within $5$ minutes on eight cores of an Apple M1 Max processor, including the full $n^d$ momentum grid. On a single core, the 3D series at $n=10$ (1qp, order 10) requires approximately half a minute for the full momentum grid with $|\Lambda_n|=10^3$ grid points.

\section{Physical application: Anomalous dispersion relation in a 3D quantum antiferromagnet with long-range interactions}
\label{sec:lrtfim}

In the last section, we have demonstrated agreement of our method with analytical and numerical benchmarks as well as with an extensive collection of published Monte Carlo data. In this section, we will derive physical results in a regime, where no Monte Carlo data is available anymore. Namely, we will show a finely resolved dispersion relation of a 3D LRTFIM on the cubic lattice $\Lambda=\mathds Z^3$ for a power-law interaction with $\sigma=1$ ($\nu=4$) close to the system dimension $d$. As Monte Carlo runs require typically a full day on a cluster for a single momentum evaluation, resolving the dispersion relation on a complete momentum grid is impossible. Meanwhile, the graph-zeta method produces the whole momentum grid at the same numerical cost than the single evaluation, as it is compatible with FFT. Following \cref{def:corpus}, the single-quasiparticle
dispersion relation is the corpus sum series
\[
  \omega(\bm k,\lambda)=\sum_{r=0}^{r_\mathrm{max}}c_r(\bm k)\,\lambda^{r},
\]
in units of the bare gap, where $c_0=1$ and with the remaining coefficients obtained via the graph-zeta method,
\[
  c_r(\bm k)=\sum_{G\in\mathcal C_r}a_r(G)\,\zeta_{\Lambda,(G,\mathcal K_\nu)}(\bm k),\quad r>0.
\]
Here, choosing $\lambda>0$ corresponds to a ferromagnetic coupling and $\lambda<0$ to an antiferromagnet.

\begin{figure}
    \centering 
    \includegraphics[width=.7\textwidth]{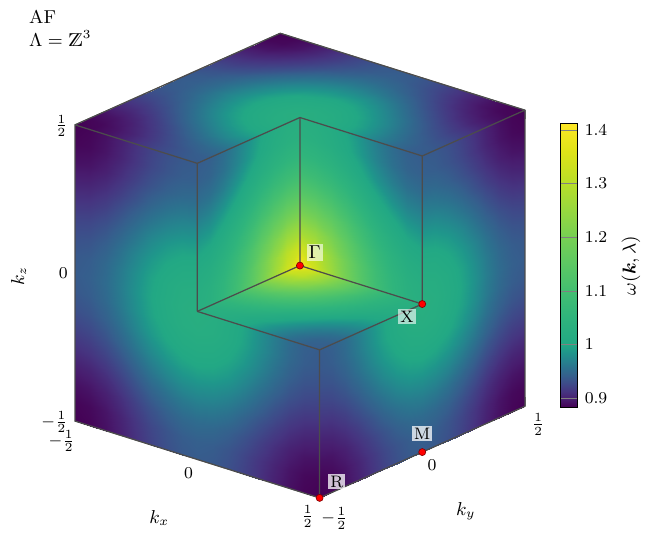}
    \caption{Three-dimensional colour plot of the dispersion relation
$\omega(\bm k,\lambda)$ of the 3D LRTFIM on the cubic
lattice $\Lambda=\mathds Z^3$ with exponent $\sigma=1$ ($\nu=d+\sigma=4$)
at antiferromagnetic coupling $\lambda=-0.03$, obtained from the order $r=10$ series
expansion (7\,136 graphs, up to $10d=30$-dimensional sums) evaluated with graph zeta functions on a momentum grid of
$N=16^3=4096$ points (colours interpolated linearly between grid
points) with energies in units of the bare gap.  The octant $k_x>0$, $k_y<0$,
$k_z>0$ is cut away to expose the planes $k_x=0$, $k_y=0$ and $k_z=0$
through $\Gamma$, where the dispersion exhibits a nonanalyticity.  The red dots mark the
high-symmetry points, in units of the reciprocal lattice vectors,
$\Gamma=(0,0,0)^T$, $\mathrm X=(\tfrac12,0,0)^T$, $\mathrm M=(\tfrac12,0,-\tfrac12)^T$
and $\mathrm R=(\tfrac12,-\tfrac12,-\tfrac12)^T$. Note that M and R are equivalent to the
standard $(\tfrac12,\tfrac12,0)^T$ and $(\tfrac12,\tfrac12,\tfrac12)^T$ by
the cubic symmetry. The evaluation of the full momentum grid on a single core of an Apple M1 Max processor required approximately 10 minutes.}
\label{fig:dispersion_3D_afm}
\end{figure}

We provide the, to the best of our knowledge, first finely resolved dispersion relation of a three-dimensional LRTFIM obtained from a high-order series expansion in \cref{fig:dispersion_3D_afm}. Here, the reduced exponent $\sigma=1$ was chosen and a perturbative series up to order $r=10$ was computed at antiferromagnetic coupling $\lambda=-0.03$. An octant was removed from the cubic Brillouin zone to expose the planes $k_x=0$, $k_y=0$, and $k_z=0$ that cut through the high-symmetry points
\[
\Gamma=(0,0,0)^T,~\mathrm X=(\tfrac12,0,0)^T,~ \mathrm M=(\tfrac12,0,-\tfrac12)^T,~ 
\text{and}~\mathrm R=(\tfrac12,-\tfrac12,-\tfrac12)^T
\]
which are marked in the plots as red dots.  The global maximum of the dispersion is reached at the $\Gamma$ point, with a local minum at $\mathrm{M}$ and the global minimum at $\mathrm{R}$.

A cut along along the standard momentum path connecting the high-symmetry points is provided in \cref{fig:dispersion_path} for $\lambda=-0.03$, the choice of \cref{fig:dispersion_3D_afm}, as well as two other couplings $\lambda=-0.02$ and $\lambda=-0.01$. The cut clearly shows a non-analyticity of the dispersion relation at the $\Gamma$ point, scaling as $\vert\bm k \vert$. This non-analyticity is recovered analytically from simple Epstein zeta bridges, which include singularities of the form $c_\nu |\bm k|^{\sigma}$, see \cref{lem:holomorphy}\,(b), which results in the observed behavior at $\sigma=1$. The presence of such non-analyticities is known as anomalous dispersion. It constitutes a direct fingerprint of a long-range interaction tail and is sought after in real materials. In a current experiment, a dipolar XY Rydberg atom simulator resolves the
dispersion of elementary excitations of a two-dimensional long-range XY model, recovering, for ferromagnetic couplings, a non-linear small-$\bm k$ dispersion attributable to the dipolar tail in \cite[Fig.~2]{chen2025spectroscopy}. In solid state systems, the dispersive excitations of the triangular-lattice quantum Ising magnet KTmSe$_2$ have been resolved by inelastic neutron scattering in \cite{zheng2023exchange}. In our companion paper \cite{duft2026}, evidence is presented that its dispersion can be explained by nearest-neighbor exchange interactions accompanied by long-range dipolar interactions, based on high-order series expansions evaluated with the GZL library \cite{gzl2026}.

\begin{figure}
    \centering 
    \includegraphics[width=\textwidth]{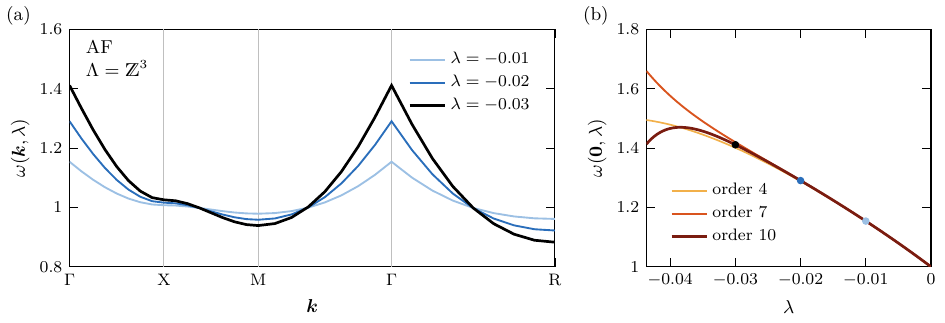}
    \caption{(a) Cut along the high-symmetry path of the dispersion relation $\omega(\bm k,\lambda)$ for the 3D antiferromagnetic LRTFIM with $\Lambda=\mathds Z^3$ with $\sigma=1$ for different choices of the perturbation $\lambda\in \{-0.01,-0.02,-0.03\}$ with $\lambda=-0.03$ corresponding to \cref{fig:dispersion_3D_afm}. Panel (b) shows displays the dispersion at the $\Gamma$ point ($\bm k=\bm 0$) as a function of $\lambda$ for different expansion orders $r\in \{4,7,10\}$. The colored dots mark the $\lambda$ values chosen in panel (a).}
    \label{fig:dispersion_path}
\end{figure}

\section{Outlook}
\label{sec:outlook}

In this work, we have provided a precise and efficient method that permits the computation of the high-dimensional lattice sums appearing in high-order series expansions of observables of gapped quantum lattice models. Due to its generality, being applicable to general $d$-dimensional lattices, general interactions and general gapped quantum models, the framework offers vast application potential. While high-order series expansions belong to the small set of methods that, in principle, can provide a reliable ground truth for an infinite quantum system, they so-far suffered from the statistical error of the required Monte Carlo summations and their associated numerical cost. This cost amounts typically to a full day on a cluster for a single momentum evaluation for the long-range TFIM. Using a combination of graph theory, generalized zeta functions, Fourier analysis of homogeneous distributions, and tensor network theory, the scaling of numerical work is reduced significantly, with the largest percentage of objects evaluating at log-linear cost, compared to exponential cost in the number of graph nodes for direct approaches. This permits to reproduce published Monte Carlo data for 1D, 2D, and 3D lattices within minutes on a laptop at improved precision. Importantly, the method provides the full momentum grid at the cost of a single $\bm k$ value, allowing to resolve the full dispersion relation. In addition, it analytic access to the leading order singularities in the wavevector argument, directly related to quantum critical exponents. The numerical method is in direct use in the companion paper \cite{duft2026}, which incorporates it into a comprehensive linked-cluster perturbative method for gapped quantum lattice models. This is applied to an in-depth study of the quantum critical properties of LRTFIM, including a reproduction of experimental measurements. In order to make the work as accessible as possible to a broad audience, we provide a high-performance implementation of the method in the open-source Graph Zeta Library (GZL) \cite{gzl2026}. It is our hope, dear reader, that this work will prove helpful to you in discovering relevant new effects in infinite quantum lattice models that would otherwise be out of reach. Future work will be dedicated to accelerating numerical convergence of the algebra further by including anisotropic generalized zeta functions, already discussed in a micromagnetics setting in \cite{buchheit2026zeta}. The method will also be adapted to multi-atomic lattices, extending its reach to a large number of experimentally available compounds. This will include frustrated systems, where Monte Carlo methods can be ineffective. Finally, extensions to other models, other perturbative approaches, to finite temperature \cite{Burkard2026A,Burkard2026B}, as well as to systems with correlated disorder are worth exploring.

\section*{Acknowledgements}
AB is grateful to Jan Koziol for the inspiring discussions that initiated this work, and to Kai Phillip Schmidt and his group, in particular Antonia Duft, Patrick Adelhardt and Jan Koziol, for their time and support and for their hard work on the physical application of the numerical method. AB would like to thank Torsten Keßler, Kirill Serkh, Manfred Sigrist, and Christian Lubich for helpful discussions.  AB is grateful to Patrick Klitzke for introducing him to AI-assisted specification-driven development, which has proven helpful in the development of the Graph Zeta Library.

AB acknowledges supported by the Klaus-Tschira Stiftung under Grant No. 00.025.2025. The authors gratefully acknowledge the scientific support and HPC
resources provided by the Erlangen National High Performance Computing Center (NHR@FAU) of the Friedrich-Alexander-Universität Erlangen-Nürnberg (FAU)
under the NHR project n101af.

Claude Code (using models Opus 4.8, Opus 5, Fable 5, Fable 5.1) was used in the development of the Graph Zeta Library and in the preparation of this manuscript, in particular for literature research, informal proof checking, data analysis, and visualization. All results were verified by the authors.

\printbibliography

@article{epstein1903theorieI,
  title={{Zur Theorie allgemeiner Zetafunctionen}},
  author={Epstein, P.},
  journal={Math. Ann.},
  volume={56},
  pages={615–644},
  year={1903},
  publisher={Springer},
  url = {https://doi.org/10.1007/BF01444309}
}

@article{epstein1903theorieII,
  title={{Zur Theorie allgemeiner Zetafunktionen. II}},
  author={Epstein, P.},
  journal={Math. Ann.},
  volume={63},
  pages={205--216},
  year={1906},
  publisher={Springer},
  url = {https://doi.org/10.1007/BF01449900}
}

@book{campa2014physics,
  title={Physics of long-range interacting systems},
  author={Campa, A. and Dauxois, T. and Fanelli, D. and Ruffo, S.},
  year={2014},
  publisher={OUP Oxford}
}

@book{gelfand1964generalizedI,
  title={Generalized Functions},
  subtitle={Volume I. Properties and Operations},
  author={Gel'fand, I. M. and Shilov, G. E.},
  publisher={Academic Press},
  year={1964}
}

@article{PhysRevB.102.174424,
  title = {Quantum criticality and excitations of a long-range anisotropic XY chain in a transverse field},
  author = {Adelhardt, P. and Koziol, J. A. and Schellenberger, A. and Schmidt, K. P.},
  journal = {Phys. Rev. B: Condens. Matter},
  volume = {102},
  issue = {17},
  pages = {174424},
  numpages = {10},
  year = {2020},
  month = {Nov},
  publisher = {American Physical Society},
  url = {https://doi.org/10.1103/PhysRevB.102.174424},
}

@article{fey2019quantum,
  title={Quantum criticality of two-dimensional quantum magnets with long-range interactions},
  author={Fey, S. and Kapfer, S. C. and Schmidt, K. P.},
  journal={Phys. Rev. Lett.},
  volume={122},
  number={1},
  pages={017203},
  year={2019},
  publisher={APS},
  url={https://doi.org/10.1103/PhysRevLett.122.017203}
}

@article{duffy1982quadrature,
  title={Quadrature over a pyramid or cube of integrands with a singularity at a vertex},
  author={Duffy, M. G.},
  journal={SIAM J. Numer. Anal.},
  volume={19},
  number={6},
  pages={1260-1262},
  year={1982},
  url={https://doi.org/10.1137/0719090}
}

@book{hoermander1966introduction,
    title = {An Introduction to Complex Analysis in Several Variables},
    author = {Hörmander, L.},
    publisher = {Van Nostrand},
    year = {1966}
}

@article{buchheit2023exact,
  title = {Exact continuum representation of long-range interacting systems and emerging exotic phases in unconventional superconductors},
  author = {Buchheit, Andreas A. and Ke\ss{}ler, Torsten and Schuhmacher, Peter K. and Fauseweh, Benedikt},
  journal = {Phys. Rev. Res.},
  volume = {5},
  issue = {4},
  pages = {043065},
  numpages = {22},
  year = {2023},
  month = {Oct},
  publisher = {American Physical Society},
  doi = {10.1103/PhysRevResearch.5.043065}
}

@article{buchheit2024epstein,
  title={Computation and properties of the Epstein zeta function with applications to quantum systems},
  author={Buchheit, Andreas A and Busse, Jonathan K and Gutendorf, Ruben},
  journal={IMA Journal of Numerical Analysis},
  pages={drag057},
  year={2026},
  publisher={Oxford University Press},
  doi={10.1093/imanum/drag057}
}

@article{adelhardt2024monte,
  title={Monte Carlo based techniques for quantum magnets with long-range interactions},
  author={Adelhardt, Patrick and Koziol, Jan A and Langheld, Anja and Schmidt, Kai P},
  journal={Entropy},
  volume={26},
  number={5},
  pages={401},
  year={2024},
  publisher={MDPI}
}

@article{robles2025exact,
  title={Exact lattice summations for Lennard-Jones potentials coupled to a three-body Axilrod--Teller--Muto term applied to cuboidal phase transitions},
  author={Robles-Navarro, Andres and Cooper, Shaun and Buchheit, Andreas A and Busse, Jonathan K and Burrows, Antony and Smits, Odile and Schwerdtfeger, Peter},
  journal={The Journal of Chemical Physics},
  volume={163},
  number={9},
  year={2025},
  publisher={AIP Publishing},
  url={https://doi.org/10.1063/5.0276677}
}

@article{buchheit2025epstein_method,
  author  = {Buchheit, Andreas A. and Busse, Jonathan K.},
  title   = {Epstein zeta method for many-body lattice sums},
  journal = {Numerische Mathematik},
  year    = {2026},
  month   = jul,
  doi     = {10.1007/s00211-026-01558-y},
}

@article{koziol2024order,
  title={Order-by-disorder and long-range interactions in the antiferromagnetic transverse-field Ising model on the triangular lattice—A perturbative point of view},
  author={Koziol, Jan Alexander and M{\"u}hlhauser, Matthias and Schmidt, Kai Phillip},
  journal={Results in Physics},
  volume={61},
  pages={107794},
  year={2024},
  publisher={Elsevier}
}

@article{adelhardt2025quantum,
  title={Quantum-critical and dynamical properties of the XXZ bilayer with long-range interactions},
  author={Adelhardt, Patrick and Duft, Antonia and Schmidt, Kai Phillip},
  journal={Physical Review B},
  volume={111},
  number={2},
  pages={024409},
  year={2025},
  publisher={APS}
}

@article{stoudenmire2012studying,
  title={Studying two-dimensional systems with the density matrix renormalization group},
  author={Stoudenmire, Edwin M and White, Steven R},
  journal={Annu. Rev. Condens. Matter Phys.},
  volume={3},
  number={1},
  pages={111--128},
  year={2012},
  publisher={Annual Reviews}
}

@inproceedings{sandvik2010computational,
  title={Computational studies of quantum spin systems},
  author={Sandvik, Anders W},
  booktitle={AIP Conference Proceedings},
  volume={1297},
  number={1},
  pages={135--338},
  year={2010},
  organization={American Institute of Physics}
}

@book{diestel2025graph,
  title={Graph Theory},
  author={Diestel, Reinhard},
  edition={6th},
  series={Graduate Texts in Mathematics},
  volume={173},
  year={2025},
  publisher={Springer},
  address={Heidelberg},
  isbn={978-3-662-70106-5},
  url={https://doi.org/10.1007/978-3-662-70107-2}
}

@article{Wegner1994,
  author  = {Wegner, Franz},
  title   = {Flow-equations for {H}amiltonians},
  journal = {Annalen der Physik},
  volume  = {506},
  number  = {2},
  pages   = {77--91},
  year    = {1994},
  doi     = {10.1002/andp.19945060203}
}

@article{Knetter2000,
  author  = {Knetter, Christian and Uhrig, G{\"o}tz S.},
  title   = {Perturbation theory by flow equations: dimerized and
             frustrated {$S=1/2$} chain},
  journal = {The European Physical Journal B},
  volume  = {13},
  pages   = {209--225},
  year    = {2000},
  doi     = {10.1007/s100510050026}
}

@article{Knetter2003,
  author  = {Knetter, Christian and Schmidt, Kai P. and
             Uhrig, G{\"o}tz S.},
  title   = {The structure of operators in effective
             particle-conserving models},
  journal = {Journal of Physics A: Mathematical and General},
  volume  = {36},
  number  = {29},
  pages   = {7889--7907},
  year    = {2003},
  doi     = {10.1088/0305-4470/36/29/302}
}

@article{Fey2016,
  author  = {Fey, Sebastian and Schmidt, Kai Phillip},
  title   = {Critical behavior of quantum magnets with long-range
             interactions in the thermodynamic limit},
  journal = {Physical Review B},
  volume  = {94},
  pages   = {075156},
  year    = {2016},
  doi     = {10.1103/PhysRevB.94.075156}
}

@article{Coester2015,
  author  = {Coester, Kris and Schmidt, Kai Phillip},
  title   = {Optimizing linked-cluster expansions by white graphs},
  journal = {Physical Review E},
  volume  = {92},
  pages   = {022118},
  year    = {2015},
  doi     = {10.1103/PhysRevE.92.022118}
}

@article{Wietek2018,
  author  = {Wietek, Alexander and L{\"a}uchli, Andreas M.},
  title   = {Sublattice coding algorithm and distributed memory
             parallelization for large-scale exact diagonalizations
             of quantum many-body systems},
  journal = {Physical Review E},
  volume  = {98},
  pages   = {033309},
  year    = {2018},
  doi     = {10.1103/PhysRevE.98.033309}
}

@article{hormann2023projective,
  title={Projective cluster-additive transformation for quantum lattice models},
  author={H{\"o}rmann, Max and Schmidt, Kai Phillip},
  journal={SciPost Physics},
  volume={15},
  number={3},
  pages={097},
  year={2023},
  doi = {10.21468/SciPostPhys.15.3.097}
}

@article{buchheit2026zeta,
  title={Zeta expansion for long-range interactions under periodic boundary conditions with applications to micromagnetics},
  author={Buchheit, Andreas Alexander and Busse, Jonathan Kaspar and Ke{\ss}ler, Torsten and Rybakov, Filipp N},
  journal={Journal of Computational Physics},
  pages={114885},
  year={2026},
  publisher={Elsevier},
  doi={10.1016/j.jcp.2026.114885}
}

@article{Tarjan1972,
  author  = {Tarjan, Robert},
  title   = {Depth-first search and linear graph algorithms},
  journal = {SIAM Journal on Computing},
  volume  = {1},
  number  = {2},
  pages   = {146--160},
  year    = {1972},
  doi     = {10.1137/0201010}
}

@article{HopcroftTarjan1973,
  author  = {Hopcroft, John and Tarjan, Robert},
  title   = {Algorithm 447: efficient algorithms for graph manipulation},
  journal = {Communications of the ACM},
  volume  = {16},
  number  = {6},
  pages   = {372--378},
  year    = {1973},
  doi     = {10.1145/362248.362272}
}

@book{CyganFKLMPPS15,
  author    = {Cygan, Marek and Fomin, Fedor V. and Kowalik, {\L}ukasz and
               Lokshtanov, Daniel and Marx, D{\'a}niel and Pilipczuk, Marcin and
               Pilipczuk, Micha{\l} and Saurabh, Saket},
  title     = {Parameterized Algorithms},
  publisher = {Springer},
  address   = {Cham},
  year      = {2015},
  isbn      = {978-3-319-21274-6},
  doi       = {10.1007/978-3-319-21275-3},
}

@article{RobertsonSeymour1986,
  author  = {Robertson, Neil and Seymour, P. D.},
  title   = {Graph minors. {II}. {A}lgorithmic aspects of tree-width},
  journal = {Journal of Algorithms},
  volume  = {7},
  number  = {3},
  pages   = {309--322},
  year    = {1986},
  doi     = {10.1016/0196-6774(86)90023-4},
}

@article{halin1976s,
  title={S-functions for graphs},
  author={Halin, Rudolf},
  journal={Journal of geometry},
  volume={8},
  number={1},
  pages={171--186},
  year={1976},
  publisher={Springer}
}

@misc{duft2026,
  author = {Duft, Antonia and Adelhardt, Patrick and Koziol, Jan Alexander
            and Buchheit, Andreas A. and Schmidt, Kai Phillip},
  title  = {Exact and fast series expansions for quantum models with long-range interactions},
  year   = {2026},
  note   = {Companion article, published in parallel with this work},
}

@software{gzl2026,
  author  = {Buchheit, Andreas A.},
  title   = {Graph Zeta Library ({GZL})},
  year    = {2026},
  version = {1.0.0, to be released soon},
  license = {AGPL-3.0-or-later},
  url     = {https://github.com/graph-zeta/gzl}
  }

@article{Bodlaender1998,
  author  = {Bodlaender, Hans L.},
  title   = {A partial $k$-arboretum of graphs with bounded treewidth},
  journal = {Theoretical Computer Science},
  volume  = {209},
  number  = {1--2},
  pages   = {1--45},
  year    = {1998},
  doi     = {10.1016/S0304-3975(97)00228-4},
}

@article{RoyleSokal2015,
  author  = {Royle, Gordon F. and Sokal, Alan D.},
  title   = {Linear bound in terms of maxmaxflow for the chromatic roots of series-parallel graphs},
  journal = {SIAM Journal on Discrete Mathematics},
  volume  = {29},
  number  = {4},
  pages   = {2117--2159},
  year    = {2015},
  doi     = {10.1137/130930133},
}

@article{Duffin1965,
  author  = {Duffin, R. J.},
  title   = {Topology of series-parallel networks},
  journal = {Journal of Mathematical Analysis and Applications},
  volume  = {10},
  number  = {2},
  pages   = {303--318},
  year    = {1965},
  doi     = {10.1016/0022-247X(65)90125-3},
}

@article{AjiMcEliece2000,
  author  = {Aji, Srinivas M. and McEliece, Robert J.},
  title   = {The Generalized Distributive Law},
  journal = {IEEE Transactions on Information Theory},
  volume  = {46},
  number  = {2},
  pages   = {325--343},
  year    = {2000},
  doi     = {10.1109/18.825794},
  issn    = {0018-9448}
}

@article{Dechter1999,
  author  = {Dechter, Rina},
  title   = {Bucket elimination: A unifying framework for reasoning},
  journal = {Artificial Intelligence},
  volume  = {113},
  number  = {1--2},
  pages   = {41--85},
  year    = {1999},
  doi     = {10.1016/S0004-3702(99)00059-4},
}

@article{MarkovShi2008,
  author  = {Markov, Igor L. and Shi, Yaoyun},
  title   = {Simulating quantum computation by contracting tensor networks},
  journal = {SIAM Journal on Computing},
  volume  = {38},
  number  = {3},
  pages   = {963--981},
  year    = {2008},
  doi     = {10.1137/050644756},
}

@article{kschischang2001factor,
  title={Factor graphs and the sum-product algorithm},
  author={Kschischang, Frank R and Frey, Brendan J and Loeliger, H-A},
  journal={IEEE Transactions on information theory},
  volume={47},
  number={2},
  pages={498--519},
  year={2001},
  publisher={IEEE},
  doi = {10.1109/18.910572}
}

@book{koller2009probabilistic,
  title={Probabilistic graphical models},
  author={Koller, Daphne and Friedman, Nir},
  volume={165},
  year={2009},
  publisher={MIT press Cambridge}
}

@article{Langheld2022,
  author  = {Langheld, Anja and Koziol, Jan A. and Adelhardt, Patrick and
             Kapfer, Sebastian C. and Schmidt, Kai Phillip},
  title   = {Scaling at quantum phase transitions above the upper critical dimension},
  journal = {SciPost Phys.},
  volume  = {13},
  pages   = {088},
  year    = {2022},
  doi     = {10.21468/SciPostPhys.13.4.088},
  eprint       = {2203.08081},
  archivePrefix = {arXiv},
  note    = {Series coefficients in the accompanying data set,
             \href{https://doi.org/10.5281/zenodo.6645107}{doi:10.5281/zenodo.6645107}},
}

@phdthesis{Fey2020Diss,
  author = {Fey, Sebastian},
  title  = {Investigation of Zero-Temperature Transverse-Field Ising Models
            with Long-Range Interactions},
  school = {Friedrich-Alexander-Universit\"at Erlangen-N\"urnberg},
  year   = {2020},
}

@unpublished{AdelhardtPrivComm,
  author = {Adelhardt, Patrick},
  title  = {Monte-Carlo series coefficients for the long-range transverse-field
            {I}sing model on the 2D square lattice},
  note   = {Private communication},
  year   = {2026},
}

@misc{Langheld2022Data,
  author    = {Langheld, Anja and Koziol, Jan A. and Adelhardt, Patrick and
               Kapfer, Sebastian C. and Schmidt, Kai Phillip},
  title     = {Raw data to ``{S}caling at quantum phase transitions above the
               upper critical dimension''},
  year      = {2022},
  publisher = {Zenodo},
  doi       = {10.5281/zenodo.6645107}
}

@article{chen2025spectroscopy,
  title={Spectroscopy of elementary excitations from quench dynamics in a dipolar XY Rydberg simulator},
  author={Chen, Cheng and Emperauger, Gabriel and Bornet, Guillaume and Caleca, Filippo and G{\'e}ly, Bastien and Bintz, Marcus and Chatterjee, Shubhayu and Liu, Vincent and Barredo, Daniel and Yao, Norman Y. and Lahaye, Thierry and Mezzacapo, Fabio and Roscilde, Tommaso and Browaeys, Antoine},
  journal={Science},
  volume={389},
  number={6759},
  pages={483--487},
  year={2025},
  publisher={AAAS},
  url={https://doi.org/10.1126/science.adn0618},
}

@article{zheng2023exchange,
  title={Exchange-renormalized crystal field excitations in the quantum Ising magnet KTmSe$_2$},
  author={Zheng, Shiyi and Wo, Hongliang and Gu, Yiqing and Luo, Rui Leonard and Gu, Yimeng and Zhu, Yinghao and Steffens, Paul and Boehm, Martin and Wang, Qisi and Chen, Gang and Zhao, Jun},
  journal={Phys. Rev. B},
  volume={108},
  number={5},
  pages={054435},
  year={2023},
  publisher={APS},
  url={https://doi.org/10.1103/PhysRevB.108.054435},
}

@article{semeghini2021probing,
  title={Probing topological spin liquids on a programmable quantum simulator},
  author={Semeghini, Giulia and Levine, Harry and Keesling, Alexander and Ebadi, Sepehr and Wang, Tout T and Bluvstein, Dolev and Verresen, Ruben and Pichler, Hannes and Kalinowski, Marcin and Samajdar, Rhine and others},
  journal={Science},
  volume={374},
  number={6572},
  pages={1242--1247},
  year={2021},
  publisher={American Association for the Advancement of Science}
}

@article{samajdar2021quantum,
  title={Quantum phases of Rydberg atoms on a kagome lattice},
  author={Samajdar, Rhine and Ho, Wen Wei and Pichler, Hannes and Lukin, Mikhail D and Sachdev, Subir},
  journal={Proceedings of the National Academy of Sciences},
  volume={118},
  number={4},
  pages={e2015785118},
  year={2021},
  publisher={National Academy of Sciences}
}

@article{Burkard2026A,
  title = {Dynamic Correlations of Frustrated Quantum Spins from High-Temperature Expansion},
  author = {Burkard, Ruben and Schneider, Benedikt and Sbierski, Bj\"orn},
  journal = {Phys. Rev. Lett.},
  volume = {136},
  issue = {5},
  pages = {056501},
  numpages = {10},
  year = {2026},
  month = {Feb},
  publisher = {American Physical Society},
  doi = {10.1103/jtjk-x2lw},
  url = {https://link.aps.org/doi/10.1103/jtjk-x2lw}
}

@article{Burkard2026B,
  title = {High-temperature series expansion of the dynamic Matsubara spin correlator},
  author = {Burkard, Ruben and Schneider, Benedikt and Sbierski, Bj\"orn},
  journal = {Phys. Rev. B},
  volume = {113},
  issue = {7},
  pages = {075102},
  numpages = {15},
  year = {2026},
  month = {Feb},
  publisher = {American Physical Society},
  doi = {10.1103/1l92-z6qd},
  url = {https://link.aps.org/doi/10.1103/1l92-z6qd}
}

\appendix

\end{document}